\renewcommand\subset{\subseteq}
\newcommand\vm{\vec m}
\newcommand{\BP}{\mathrm{BP}}
\newcommand{\GG}{\mathbb G}
\newcommand\vU{\vec U}
\newcommand\Cutm{\cD_{\Box}}
\newcommand\nix{\,\cdot\,}
\newcommand\vI{\vec I}
\newcommand\G{\vec G}
\numberwithin{equation}{section}
\renewcommand{\vec}[1]{\boldsymbol{#1}}
\newcommand\SIGMA{\vec\sigma}
\newcommand\TAU{\vec\tau}
\newcommand\THETA{\vec\theta}
\newtheorem{definition}{Definition}[section]
\newtheorem{example}[definition]{Example}
\newtheorem{theorem}[definition]{Theorem}
\newtheorem{lemma}[definition]{Lemma}
\newtheorem{proposition}[definition]{Proposition}
\newtheorem{corollary}[definition]{Corollary}
\newtheorem{fact}[definition]{Fact}
\newcommand\cC{\mathcal{C}}
\newcommand\cD{\mathcal{D}}
\newcommand\cF{\mathcal{F}}
\newcommand\cE{\mathcal{E}}
\newcommand\cM{\mathcal{M}}
\newcommand\cO{\mathcal{O}}
\newcommand\cP{\mathcal{P}}
\newcommand\cX{\mathcal{X}}
\def\cC{{\mathcal C}}
\def\cE{{\mathcal E}}
\newcommand\w{{\omega}}
\newcommand{\beq}{\begin{equation}} \newcommand{\eeq}{\end{equation}}
\newcommand{\dc}{d_{q,\mathrm{cond}}}
\newcommand\eul{\mathrm{e}}
\newcommand\eps{\varepsilon}
\newcommand\Erw{\mathrm{E}}
\newcommand{\vecone}{\vec{1}}
\newcommand{\Po}{{\rm Po}}
\newcommand\TV[1]{\left\|{#1}\right\|_{\mathrm{TV}}}
\newcommand\bc[1]{\left({#1}\right)}
\newcommand\cbc[1]{\left\{{#1}\right\}}
\newcommand{\bck}[1]{\left\langle{#1}\right\rangle}
\newcommand\brk[1]{\left\lbrack{#1}\right\rbrack}
\newcommand\scal[2]{\bck{{#1},{#2}}}
\newcommand\abs[1]{\left|{#1}\right|}
\newcommand\RR{\mathbb{R}}
\newcommand{\whp}{a.a.s.}
\newcommand{\tensor}{\otimes}
\newcommand{\Erdos}{Erd\H{o}s}
\newcommand{\Renyi}{R\'enyi}
\newcommand{\Szemeredi}{Szemer\'edi}
\newcommand\pr{\mathrm{P}}
\newcommand\Lem{Lemma}
\newcommand\Prop{Proposition}
\newcommand\Thm{Theorem}
\newcommand\Cor{Corollary}
\newcommand\Sec{Section}
\newcommand\Chap{Chapter}
\begin{document}

\title{Bethe states of random factor graphs}

\author[Coja-Oghlan and Perkins]{Amin Coja-Oghlan, Will Perkins}

\address{Amin Coja-Oghlan, {\tt acoghlan@math.uni-frankfurt.de}, Goethe University, Mathematics Institute, 10 Robert Mayer St, Frankfurt 60325, Germany.}

\address{Will Perkins, {\tt math@willperkins.org}, Department of Mathematics, Statistics, and Computer Science, University of Illinois at Chicago, Chicago, IL, USA.}

\begin{abstract}
\noindent
We verify a key component of the replica symmetry breaking hypothesis put forward in the physics literature [M\'ezard and Montanari 2009] on random factor graph models.  For a broad class of these models we verify that the Gibbs measure can be decomposed into a moderate number of \textit{Bethe states}, subsets of the state space in which both short and long range correlations of the measure take a simple form.  Moreover, we show that the marginals of these Bethe states can be obtained from fixed points of the Belief Propagation operator. 
We derive these results from a new result on the approximation of general probability measures on discrete cubes by convex combinations of product measures.

\medskip\noindent
Mathematical Subject Classification: 82B20, 05C80, 60B99
\end{abstract}

\maketitle

\section{Introduction}\label{Sec_intro}

\noindent
Since the early 2000's there has been a great deal of interest in spin systems whose geometry is induced by a sparse random graph or hypergraph.
Physicists have been investigating these `diluted mean-field models' as models of disordered systems such as glasses~\cite{CDGS,MP1,mezard1990spin}.
Indeed, in the physics literature a comprehensive but non-rigorous approach to the study of such models called the `cavity method' has been developed~\cite{MM,MP2}.
The cavity method has since been used to put forward countless intriguing predictions on similar models that had been  studied, in some cases for decades, in other disciplines such as combinatorics, computer science or coding theory~\cite{pnas,MPZ}.
These predictions and the mathematical work that ensued have revolutionized the study of random graph models.
In several cases important specific predictions have been verified rigorously.
To name just a few examples, there is the proof of the free energy formula for the ferromagnetic Potts model on the random graph~\cite{demboPotts},
the maximum independent set size on random regular graphs~\cite{DingIS}, the $k$-SAT threshold~\cite{KostaSAT,DSS3}, the mutual information in low-density parity check codes~\cite{GMU} and the information-theoretic and algorithmic thresholds of the stochastic block model and statistical inference problems~(e.g., \cite{abbe2015detection,Banks,CKPZ,massoulie2014community,mossel2013proof,Mossel}).
In other cases the physics perspective has sparked new uses of random graph models, and this has led to breakthrough results in, e.g., coding theory and computational complexity theory (e.g.,~\cite{barbier,GSV,MWW,RichardsonUrbanke,SS}).
But while most current rigorous proofs depend on complicated,  problem-specific arguments, the physics calculations are generic.
Hence, an obvious question is whether, instead of proceeding bottom-up by verifying specific predictions one at a time, we can identify the abstract mathematical principles behind the remarkable success of the cavity method.

Its cornerstone  is the {\em replica symmetry breaking} hypothesis, according to which the Gibbs distribution of a spin system on a random graph can be decomposed into {\em Bethe states}~\cite{DemboBrazil,MM}.
These are parts of the phase space where short-range correlations can be characterized in a particularly simple way (`independent boundary times internal distribution') and long-range correlations are negligible.
Furthermore, according to the cavity method the conditional Gibbs distribution on a Bethe state is intimately tied to the graph structure.
Specifically, the conditional Gibbs marginals are determined by a fixed point of the Belief Propagation operator, an easy-to-describe operator that can be read off from the graph structure.
The main result of this paper establishes the Bethe state decomposition rigorously for a broad class of random factor graph models.
A key tool behind the proof is a further, even more general result that shows how {\em any} measure on a discrete cube can be approximated in a certain metric by a mixture of a bounded number of product measures.

To gain an intuition of the concept of Bethe states, let us summarize the physics predictions on a specific model, the Potts antiferromagnet on the 
	\Erdos-\Renyi\ graph~\cite{pnas}.
Thus, consider the random graph $\GG=\GG(n,p)$ on $n$ vertices obtained by connecting any two vertices with probability $p$ independently.
Specifically, set $p=d/n$ for a fixed $d>0$ and assume that $n$ is sufficiently large.
Let us write $u\sim v$ if the edge $\{u,v\}$ is present in $\GG$.
We remember that for a number $q\geq2$ of colors and for a given $\beta>0$ the Potts antiferromagnet on $\GG$ is the distribution defined by
	\begin{align*}
	\mu_{\GG,\beta}(\sigma)&\propto\exp\brk{-\beta\sum_{1\leq u<v\leq n,u\sim v}\vecone\{\sigma_u=\sigma_v\}}&\mbox{for }\sigma\in\{1,\ldots,q\}^n.
	\end{align*}
Thus, the model has two natural parameters: the inverse temperature $\beta$ and the average degree $d$ of the \Erdos-\Renyi\ graph.
To investigate the phase diagram we will fix $\beta$ and vary $d$.

How does $\mu_{\GG,\beta}$ evolve as $d$ increases?
Initially, for $d<1$ below the percolation threshold the random graph $\GG$ typically decomposes into connected components of size $O(\ln n)$.
Clearly, the colors assigned to vertices in different components are independent.
In effect, long-range correlations are absent.
Formally, let us write $\mu_{\GG,\beta,u,v}$ for the joint distribution of the colors of vertices $u$ and $v$ and
$\mu_{\GG,\beta,u}$ for the marginal distribution of the color of vertex $u$.
Due to the symmetry among colors $\mu_{\GG,\beta,u}$ is just the uniform distribution on $[q] = \{1,\ldots,q\}$ for all $u$; but this is a specific feature of the Potts model.
Because two randomly chosen vertices likely belong to different components of $\GG$ for $d<1$ we see that
	\begin{align}\label{eqRS}
	\lim_{n\to\infty}\frac1{n^2}\sum_{1\leq u<v\leq n}\Erw\TV{\mu_{\GG,\beta,u,v}-\mu_{\GG,\beta,u}\tensor\mu_{\GG,\beta,v}}&=0,
	\end{align}
where, of course, the expectation is over the choice of $\GG$.
 Hence, correlations decay for most vertex pairs $u,v$ on a typical random graph $\GG$ as $n\to\infty$.
In the language of the cavity method,  $\mu_{\GG,\beta}$ is {\em replica symmetric} for $d<1$.

Perhaps surprisingly, the replica symmetry condition (\ref{eqRS}) continues to hold for $d$ far beyond the percolation threshold.
But there is a critical value $\dc(q,\beta)>1$ called the {\em condensation threshold} where (\ref{eqRS}) ceases to hold.
The precise value of $\dc(q,\beta)$ was recently determined rigorously~\cite{CKPZ}.
Yet even though long-range correlations prevail beyond $\dc(q,\beta)$, 
the cavity method predicts that the measure $\mu_{\GG,\beta}$ decomposes into a moderate number of Bethe states within which long-range correlations are absent.  Each of these Bethe states is nothing more than the conditional distribution $\mu_{\GG,\beta}[\nix|S]$ on a subset $S\subset\{1,\ldots,q\}^n$ of the phase space.
Furthermore, within a Bethe state the short-range correlations take a particularly simple form: if we consider a bounded-depth `cavity' around a vertex $u$ of $\GG$, then the joint distribution of the colors inside this cavity is the internal Potts distribution on the cavity itself equipped with a random boundary condition induced by independent `messages'.
The internal distribution of the cavity is extremely simple because on a sparse random graph the bounded-depth neighborhood of a vertex is typically acyclic.
Additionally, the messages that induce the boundary condition are predicted to satisfy a set of recurrence relations that makes them amenable to a rigorous analysis, the Belief Propagation equations.

To introduce Bethe states precisely we need a bit of notation. 
For a vertex $u$ and an integer $r\geq0$, let $\nabla_{u,r}$ be the depth-$r$ neighborhood of $u$; that is, $u$ and all vertices within graph distance $r$ of $u$. Let $\GG[\nabla_{u,r}]$ be the subgraph induced on $\nabla_{u,r}$.
Moreover, for an edge $\{v,w\}$ of $\GG$, a color $\omega \in\{1,\ldots,q\}$ and a set $S \subseteq\{1,\ldots,q\}^n$, define the \textit{message} from $w$ to $v$ as 
	\begin{equation}\label{eqmsg}
	\mu_{\GG,\beta,w \to v} [\omega|S] = \mu_{\GG -\{ v,w\},w, \beta}[\omega |S];
	\end{equation}
that is, the probability that $w$ receives color $\omega$ in the Potts model on the graph $\GG$ with the edge $\{v,w\}$ removed, conditional on the colorings in the set $S$.
Further, we can define a local measure on colorings of $\nabla_{v,r}$ where we combine the local Potts model on the subgraph $\GG[\nabla_{v,r}]$ with an independent boundary condition given by the messages (\ref{eqmsg}): for every $\sigma \in\{1,\ldots,q\}^{\nabla_{u,r}}$ we define
	\begin{align}\label{eqPottsBethe}
	\bar\mu_{\GG,\beta,u,r}[\sigma|S] &\propto 
	\mu_{\GG[\nabla_{u,r}],\beta}(\sigma)
	 \prod_{\substack{v \in \nabla_{u,r}\\w \notin\nabla_{u,r}\\ v\sim w}}\bc{1-(1-\eul^{-\beta})\mu_{\GG,\beta,w\to v}[\sigma_v|S]}.
	\end{align}
Additionally, let $\mu_{\GG,\beta, \nabla_{u,r}}$ be the joint distribution of the spins in $\nabla_{u,r}$ under $\mu_{\GG,\beta}$.
Then $S$ is an  $(\eps, r)$-\textit{Bethe state} if
	\begin{align}
	\label{eqbethepotts1}
	\frac{1}{n} \sum_{u=1}^n \TV{\bar \mu_{\GG,\beta,u,r}[ \nix |S] - \mu_{\GG,\beta, \nabla_{u,r}}[ \nix |S]     }  &< \eps \,\qquad\mbox{and}\\
	\label{eqRSpotts}
	\frac1{n^2}\sum_{1\leq v<w\leq n}\TV{\mu_{\GG,\beta,v,w}[\nix|S]
		-\mu_{\GG,\beta,v}[\nix|S]\tensor \mu_{\GG,\beta,w}[\nix|S]} &<\eps \, .
	\end{align}
In words, \eqref{eqbethepotts1} provides that for almost all $u$ the actual  joint distribution $\mu_{\GG,\beta, \nabla_{u,r}}$ of the colors inside the `cavity' $\nabla_{u,r}$ is close to
the idealized distribution (\ref{eqPottsBethe}) with an independent boundary condition.
Moreover, \eqref{eqRSpotts} is the exact analogue of~\eqref{eqRS} for the conditional measure $\mu_{\GG,\beta}[\nix |S]$. The condition guarantees the absence of long-range correlations within the Bethe state.

Finally, writing $\partial v$ for the set of neighbors of a vertex $v$, we say that the messages defined in (\ref{eqmsg}) form an $\eps$-\textit{Belief Propagation fixed point} if 
\begin{align}\label{eqPottsBP}
\frac{1}{n} \sum_{v\sim w}\sum_{\omega=1}^q \TV{ \mu_{\GG,\beta,v \to w}[\omega|S]-	
	\frac{\prod_{u\in\partial v\setminus w}1-(1-\eul^{-\beta})\mu_{\GG,\beta,u \to v}[\omega|S]}
		{\sum_{\sigma=1}^q\prod_{u\in\partial v\setminus w}1-(1-\eul^{-\beta})\mu_{\GG,\beta,u \to v}[\sigma|S]}}&<\eps.
\end{align}
This means that the messages on the random graph $\GG$ given $S$ approximately satisfy the same recurrence that holds for the messages of the Potts model on a tree.
In other words, given $S$ the messages that induce the local distributions (\ref{eqPottsBethe}) are accessible to the extent that we can extract information about them from the fixed point relation (\ref{eqPottsBP}), which is defined purely combinatorially in terms of the graph structure.

The following theorem, which is the specialization of our main result (Theorem~\ref{Thm_BetheStates} below) to the Potts model,
confirms the Bethe state prediction for the Potts model on the \Erdos-\Renyi\ graph.

\begin{theorem}\label{thmPotts}
For every $q\geq2,d>0, \beta>0$ and for any $r>0$ and any sequence $L=L(n)\to\infty$ there exists $\eps=\eps(n)\to0$
such that the random factor graph $\GG$ has the following property with probability $1-o(1)$ as $n\to\infty$.
There exist pairwise disjoint non-empty sets $S_1=S_1(\GG),\ldots,S_l=S_l(\GG)\subset\{1,\ldots,q\}^{n}$ with $1\leq l\leq L$ such that
	\begin{enumerate}[(i)]
	\item $\sum_{i=1}^l\mu_{\GG}(S_i)>1-\eps$,
	\item $S_i$ is an $(\eps,r)$-Bethe state of $\GG$ for each $i=1, \ldots l$, 
	\item the messages $(\mu_{\GG,\beta,v \to w}[\nix|S_i])_{v\sim w}$ are an $\eps$-Belief Propagation fixed point for every $i=1,\ldots,l$.
	\end{enumerate}
\end{theorem}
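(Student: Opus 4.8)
\section*{Proof proposal for Theorem~\ref{thmPotts}}

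The plan is to obtain Theorem~\ref{thmPotts} as a corollary of the general Bethe-state decomposition, Theorem~\ref{Thm_BetheStates}, by realising the Potts antiferromagnet on $\GG=\GG(n,d/n)$ as one of the random factor graph models covered there. Concretely, one represents the model by a factor graph whose variable nodes are the vertices $[n]$, each with spin set $[q]$, and whose constraint nodes are the edges of $\GG$, the constraint attached to $\{u,v\}$ carrying the weight $\psi(\sigma_u,\sigma_v)=\exp(-\beta\vecone\{\sigma_u=\sigma_v\})$. Since these weights lie in $[\eul^{-\beta},1]$, hence are bounded away from $0$ and $\infty$, and the constraint degree is $2$, this falls within the scope of Theorem~\ref{Thm_BetheStates}; and the distribution of $\GG$ — a binomial number of edges on uniformly random vertex pairs — is of the type the theorem admits. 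Applying that theorem produces the sets $S_1,\dots,S_l$ with $l\le L$, and assertion (i), $\sum_i\mu_{\GG}(S_i)>1-\eps$, is immediate from it.

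It then remains to check that the abstract guarantees of Theorem~\ref{Thm_BetheStates} unwind into the concrete conditions \eqref{eqbethepotts1}--\eqref{eqPottsBP}. The long-range part \eqref{eqRSpotts} is a verbatim specialisation of the abstract decorrelation condition and needs nothing further. For \eqref{eqbethepotts1} and \eqref{eqPottsBP} one invokes two standard features of the sparse random graph: \emph{(a)} for each fixed $r$, the subgraph $\GG[\nabla_{u,r}]$ is acyclic for all but $o(n)$ vertices $u$ and has $O(1)$ excess edges for the rest; and \emph{(b)} the Potts model has the spatial Markov property, so conditioning on the spins on the outer boundary of $\nabla_{u,r}$ makes the interior spins distributed as $\mu_{\GG[\nabla_{u,r}],\beta}$ tilted by those boundary values — which is precisely the shape of \eqref{eqPottsBethe} provided the boundary spins enter as independent messages of the form \eqref{eqmsg}.

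The heart of the argument is therefore to show that, conditional on $S_i$, the spins that $\nabla_{u,r}$ receives from outside are approximately independent, each with marginal $\mu_{\GG,\beta,w\to v}[\nix|S_i]$. This should follow from the abstract Bethe-state property — pairwise decorrelation of coordinates under $\mu_{\GG}[\nix|S_i]$, averaged over all $\binom n2$ pairs — by a telescoping argument over the $O(1)$ boundary vertices of a typical neighbourhood, upgrading pairwise to mutual near-independence; here the pinning construction underlying Theorem~\ref{Thm_BetheStates} (and the attendant approximation of measures on discrete cubes by mixtures of product measures announced in the abstract) does the real work. Feeding this into (b) yields \eqref{eqbethepotts1}, and running the same reasoning on $\GG-\{v,w\}$ — whose neighbourhoods and correlation structure differ from those of $\GG$ only negligibly — turns the Potts tree recursion for the messages into the fixed-point relation \eqref{eqPottsBP}, up to an error that is $o(1)$ after averaging over edges.

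The main obstacle I expect is exactly this last upgrade: the abstract theorem only controls a \emph{typical} pair of coordinates, whereas each neighbourhood $\nabla_{u,r}$ involves only a bounded set of coordinates, so one must rule out bad pairs concentrating inside bounded neighbourhoods. This calls for a counting/first-moment argument bounding, for most $u$, the number of atypical pairs within $\nabla_{u,r}\cup\partial\nabla_{u,r}$, combined with a regularity argument (again via pinning) to convert the resulting pairwise bounds into control of the joint law of the boundary. Getting the two error budgets — the $o(n)$ exceptional vertices from tree-likeness and the averaged decorrelation error from Theorem~\ref{Thm_BetheStates} — to combine into a single $\eps=\eps(n)\to0$, uniformly in the prescribed $r$ and $L=L(n)$, is the delicate bookkeeping step.
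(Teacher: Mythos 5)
Your high-level route is the same as the paper's: \Thm~\ref{thmPotts} is indeed meant to be the specialization of \Thm~\ref{Thm_BetheStates} (together with \Cor~\ref{Cor_BP}) to the Potts factor graph model of Example 3.1. But you misread what the general theorem delivers, and this misreading manufactures the ``heart of the argument'' that your sketch then cannot carry out. The $(\eps,\ell)$-Bethe state property in \Thm~\ref{Thm_BetheStates} is \emph{not} averaged pairwise decorrelation of $\mu_{\G}[\nix|S_i]$; it is the cavity factorization \eqref{eqBetheState}: for all but an $\eps$-fraction of cavities $U$ of each bounded size and component count, the joint law $\mu_{\G,U}[\nix|S_i]$ is TV-close to the internal Gibbs measure on $\G[U]$ times \emph{independent} standard messages, as in \eqref{eqMubar}. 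Hence \eqref{eqbethepotts1} follows directly by taking $U=\nabla_{u,r}$ (which is a cavity for all but $o(n)$ vertices $u$, and for each bounded size these neighborhoods form a constant proportion of all cavities of that size, so ``almost all cavities'' transfers to ``almost all $u$''), \eqref{eqRSpotts} follows from cavities with two components exactly as in \eqref{eqRSBG}, and condition (iii) is literally \Cor~\ref{Cor_BP}; the only translation needed is that the boundary factors $1-(1-\eul^{-\beta})\mu_{\GG,\beta,w\to v}[\nix|S_i]$ in \eqref{eqPottsBethe} are the BP images of variable-to-constraint messages, which match the constraint-to-variable messages of \eqref{eqMubar} up to the $\eps$-BP fixed point error. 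None of the work you label as the main obstacle needs to be redone downstream of the theorem statement.

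Moreover, the replacement argument you sketch would not go through as stated. Averaged pairwise decorrelation of the conditioned measure does not upgrade ``by telescoping'' to joint near-independence of the boundary spins of a cavity (pairwise independence does not imply mutual independence); what is actually needed is $R$-wise factorization, which in the paper comes from the cut-metric machinery (\Thm~\ref{Thm_pin} with \Prop~\ref{Cor_symmetry}(ii)), and, crucially, it must hold for the measure of the graph with the boundary constraints $\partial U$ \emph{deleted}, not for $\mu_{\GG,\beta}[\nix|S_i]$ itself; passing between the two uses positivity of the weights ($\psi\geq\eul^{-\beta}$, a contiguity bound) and the fact that the outside neighbors of $\partial U$ are uniformly random. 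This is precisely the content of Lemmas~\ref{lemBethe1}--\ref{lemDecor} inside the proof of \Thm~\ref{Thm_BetheStates}, not something recoverable from its statement plus pairwise decorrelation. Two smaller points: $\GG(n,d/n)$ is not literally an instance of $\G(n,\vm,P)$ (the latter has a Poisson number of ordered, possibly repeating, pairs), so you should invoke the mutual contiguity noted in Example 3.1 rather than claim the binomial model is directly admitted; and condition (iii) should simply be quoted from \Cor~\ref{Cor_BP} rather than re-derived on $\GG-\{v,w\}$.
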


\noindent
Crucially, \Thm~\ref{thmPotts} is universal: the promised decomposition exists for {\em all} values of the model parameters $q,d,\beta$.
This is remarkable because according to physics predictions~\cite{pnas} the nature of the decomposition changes substantially as we increase $d$ (for a fixed $\beta>0$).
Specifically, the results from~\cite{CKPZ} imply that for $d$ up to condensation threshold $\dc(q,\beta)$ there is just a single Bethe state $S_1=\{1,\ldots,q\}^n$.
However, for $d>\dc(q,\beta)$ we expect that an {\em unbounded} number of Bethe states will be required~\cite{Marinari}.
Hence the need for the growing function $L(n)$ in \Thm~\ref{thmPotts}.
In fact, for $d>\dc(q,\beta)$ two different scenarios called `one-step replica symmetry breaking' and `full replica symmetry breaking' in physics jargon are deemed to be possible.
Yet despite these rather intricate (and parameter and model-dependent) predictions, \Thm~\ref{thmPotts} verifies that there is a universal structure present in all these 
regimes, and that it is merely the specific instantiation of this universal structure that varies.
In fact, the Bethe state decomposition is unique in a certain sense that can be quantified in terms of the cut metric, which we will introduce in the following section.

While in the Potts model the marginals of the unconditional measure $\mu_{\GG,\beta}$ are uniform, the marginals $\mu_{\GG,\beta,i}[\nix|S_l]$ within the Bethe states are generally not.
But Theorem~\ref{thmPotts} in particular confirms that the marginals within the Bethe states are given by Belief Propagation fixed points (to see this consider cavities $\nabla_{u,0}$ consisting just of the root vertex $u$).
This goes some way towards explaining the success of the quantitative physics computations, which are based on pinpointing Belief Propagation fixed points.
That said, depending on the specific parameter values there will generally be Belief Propagation fixed points that do not correspond to actual Bethe states and thus additional, model-specific arguments may be necessary to single out the relevant fixed points.

After presenting the abstract results on the cut metric and the decomposition of discrete measures in \Sec~\ref{Sec_subcube},
we will present the main result of the paper regarding the Bethe state decomposition for general random factor graph models in \Sec~\ref{Sec_BetheStates}.

\subsection*{Notation and preliminaries}
For an integer $l>0$ we let $[l]=\{1,\ldots,l\}$.
Moreover, for a finite set $\cX\neq\emptyset$ we denote by $\cP(\cX)$ the set of probability measures on $\cX$.
We identify $\cP(\cX)$ with the standard simplex in $\RR^{\cX}$ endowed with the total variation norm $\TV\nix$.
Further, $\cP^2(\cX)$ signifies the set of probability measures on $\cP(\cX)$ equipped with the weak topology.
The Wasserstein $\ell_1$-distance of two probability distributions $p,q\in\cP^2(\cX)$ is denoted by $\cD_1(p,q)$.

Suppose that $\Omega,V\neq\emptyset$ are finite sets.
For a distribution $\mu\in\cP(\Omega^V)$ and a set $I\subset V$ we denote by $\mu_I$ the joint distribution of the coordinates $I$.
That is,
	\begin{align*}
	\mu_I(\sigma)&=\sum_{\tau\in\Omega^n}\vecone\{\forall i\in I:\tau_i=\sigma_i\}\mu(\tau)&(I\subset V,\sigma\in\Omega^I).
	\end{align*}
If $I=\{i_1,\ldots,i_l\}$ we usually write $\mu_{i_1,\ldots,i_l}$ instead of $\mu_{\{i_1,\ldots,i_l\}}$.
Further, $\SIGMA^\mu,\TAU^\mu,\SIGMA^{1,\mu},\SIGMA^{2,\mu},\ldots\in\Omega^V$ denote independent samples drawn from $\mu$.
Where the context permits it we omit the superscript $\mu$.
Moreover, if $f:(\Omega^V)^k\to\RR$ is a function, then we use the notation
	$$\scal{f(\SIGMA^1,\ldots,\SIGMA^k)}\mu=\sum_{\sigma^1,\ldots,\sigma^k\in\Omega^n}f(\sigma^1,\ldots,\sigma^k)\prod_{j=1}^k\mu(\sigma^j)$$
for the average of $f$ over independent samples from $\mu$.

Additionally, if $I\subset V$ and $\tau\in\Omega^V$, then we let $\tau|_I=(\tau_i)_{i\in I}$ be the restriction of $\tau$ to $I$.
Moreover, for $\sigma\in\Omega^I$ we let
	$$S^{I,\sigma}=\cbc{\tau\in\Omega^V:\tau|_I=\sigma}$$
be the a sub-cube of the discrete cube $\Omega^V$ where the entries of the coordinates in $I$ coincide with $\sigma$.
Further, if $\mu\in\cP(\Omega^V)$ and $\mu(S^{I,\sigma})>0$, then
	\begin{align}
	\label{eqCondMu}
\mu^{I,\sigma}=\mu[\nix|S^{I,\sigma}]
\end{align}
is the corresponding conditional distribution of $\mu$.
If $\mu(S^{I,\sigma})=0$, then $\mu^{I,\sigma}$ is just the uniform distribution on $S^{I,\sigma}$.

Finally, guided by (\ref{eqRS}) and following~\cite{Victor}, we say that $\mu\in\cP(\Omega^V)$ is {\em $\eps$-symmetric} if 
	\begin{align}\label{eqepssym}
	\frac1{|V|^2}\sum_{i,j\in V:i\neq j}\TV{\mu_{i,j}-\mu_i\tensor\mu_j}&<\eps.
	\end{align}
In words, the spins assigned to $i,j$ are asymptotically independent for most pairs of coordinates.

\section{The cut metric}\label{Sec_subcube}

\subsection{Approximation by mixtures of product measures}
To pave the way for the study of Bethe states in the next section, in this section we take a more abstract view.
The aim is to investigate the general structure of probability measures $\mu$ on a discrete cube $\Omega^n$ for a finite set of size $|\Omega|>1$ and a large integer $n$.
Of course, such measures are ubiquitous in mathematical physics; the Gibbs measure $\mu_{\GG,\beta}$ is a concrete example.

Informally, the main result of this section shows that {\em any} probability measure $\mu\in\cP(\Omega^n)$ can be approximated well by a bounded number of product measures in a natural, canonical way.
There appear two related results in the literature, and the contribution of the main result of this section is to unify these two viewpoints.
The first of the two is the following theorem.

\begin{theorem}[{\cite{Victor}}]\label{Thm_states}
For any set $\Omega$ of size $1<|\Omega|<\infty$, and for any $\eps>0$ there exist $L>0$ and $n_0>0$ such that for all $n>n_0$ and all $\mu\in\cP(\Omega^n)$ the following is true.
There exist pairwise disjoint sets $S_1,\ldots,S_l\subset\Omega^n$, $1\leq l\leq L$, such that
	\begin{enumerate}[(i)]
	\item $\sum_{i=1}^l\mu(S_i)>1-\eps$,
	\item $\mu(S_i)>0$ and the conditional distribution $\mu[\nix|S_i]$ is  $\eps$-symmetric for each $1\leq i\leq l$.
	\end{enumerate}
\end{theorem}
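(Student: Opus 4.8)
The plan is to prove \Thm~\ref{Thm_states} by a \emph{pinning} argument: the sets $S_i$ will be sub-cubes of $\Omega^n$ obtained by conditioning $\mu$ on the values of a bounded, randomly chosen set of coordinates. Concretely, fix a large integer $T=T(\eps,|\Omega|)$ to be determined, draw coordinates $\vec i_1,\ldots,\vec i_T\in[n]$ independently and uniformly, set $\vec U_t=\{\vec i_1,\ldots,\vec i_t\}$, and consider a sample $\SIGMA$ from $\mu$. For each $t$ the family $\{S^{\vec U_t,\tau}:\tau\in\Omega^{\vec U_t}\}$ partitions $\Omega^n$ into at most $|\Omega|^t$ sub-cubes, and on the positive-mass classes the associated conditional measure $\mu^{\vec U_t,\tau}$ is exactly $\mu[\nix|S^{\vec U_t,\tau}]$. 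The goal is to exhibit one value $t\le T$ and one outcome of $\vec i_1,\ldots,\vec i_t$ such that all but an $\eps$-fraction of the $\mu$-mass lies on classes whose conditional measure is $\eps$-symmetric in the sense of \eqref{eqepssym}.

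To this end, introduce the entropy potential $\Phi_t=\Erw[\tfrac1n\sum_{i\in[n]}H(\SIGMA_i\mid\SIGMA_{\vec U_t})]$, where $H(\nix\mid\nix)$ is the conditional Shannon entropy (in nats) and the expectation is over $\vec i_1,\ldots,\vec i_t$. Conditioning never raises entropy, so $(\Phi_t)_{t\ge0}$ is non-increasing, while $0\le\Phi_t\le\ln|\Omega|$; hence $\sum_{t=0}^{T-1}(\Phi_t-\Phi_{t+1})=\Phi_0-\Phi_T\le\ln|\Omega|$. The crucial identity is $\Phi_t-\Phi_{t+1}=\Erw\brk{\tfrac1n\sum_{i\in[n]}I(\SIGMA_i;\SIGMA_{\vec i_{t+1}}\mid\SIGMA_{\vec U_t})}$, with $I(\nix;\nix\mid\nix)$ the conditional mutual information; indices $\vec i_{t+1}$ that happen to fall in $\vec U_t$ merely contribute a vanishing term. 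Averaging over the fresh coordinate $\vec i_{t+1}$ and writing the conditional mutual information out with $\tau$ drawn from the marginal $\mu_{\vec U_t}$,
\[
\Phi_t-\Phi_{t+1}\ \ge\ \Erw_{\vec U_t}\,\Erw_{\tau}\brk{\frac1{n^2}\sum_{i\neq j}\KL{\mu^{\vec U_t,\tau}_{i,j}}{\mu^{\vec U_t,\tau}_i\tensor\mu^{\vec U_t,\tau}_j}}.
\]
Pinsker's inequality bounds each summand below by $2\,\TV{\mu^{\vec U_t,\tau}_{i,j}-\mu^{\vec U_t,\tau}_i\tensor\mu^{\vec U_t,\tau}_j}^2$, and convexity of $x\mapsto x^2$ shows that the average over $i\neq j$ of these squared total variations is at least the square of their average, i.e.\ of the $\eps$-symmetry defect $\delta(\mu^{\vec U_t,\tau}):=\tfrac1{n^2}\sum_{i\neq j}\TV{\mu^{\vec U_t,\tau}_{i,j}-\mu^{\vec U_t,\tau}_i\tensor\mu^{\vec U_t,\tau}_j}$. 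Summing over $t<T$ and dividing by $T$ therefore gives $\tfrac1T\sum_{t=0}^{T-1}\Erw_{\vec U_t}\Erw_{\tau}[\delta(\mu^{\vec U_t,\tau})^2]\le\tfrac{\ln|\Omega|}{2T}$.

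To conclude, choose an index $t^\ast<T$ and an outcome $\vec U_{t^\ast}=U$ for which $\Erw_{\TAU\sim\mu_{U}}[\delta(\mu^{U,\TAU})^2]\le\tfrac{\ln|\Omega|}{2T}$; such a pair exists by averaging. Applying Markov's inequality to the nonnegative random variable $\delta(\mu^{U,\TAU})^2$ with $\TAU\sim\mu_U$, the total $\mu$-mass of the sub-cubes $S^{U,\tau}$ with $\delta(\mu^{U,\tau})\ge\eps$ is at most $\tfrac{\ln|\Omega|}{2T\eps^2}$. Taking $T=\lceil\ln|\Omega|/(2\eps^3)\rceil+1$ makes this $<\eps$, so we let $S_1,\ldots,S_l$ be the sub-cubes $S^{U,\tau}$ with $\mu(S^{U,\tau})>0$ and $\delta(\mu^{U,\tau})<\eps$. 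These are pairwise disjoint, their total $\mu$-mass exceeds $1-\eps$, their conditional distributions $\mu[\nix|S_i]$ are $\eps$-symmetric, and $l\le|\Omega|^{|U|}\le|\Omega|^{T}=:L$, a quantity depending only on $\eps$ and $|\Omega|$; no lower bound on $n$ is needed, so one may take $n_0=1$.

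The conceptual content is entirely in the pinning step --- that conditioning on a bounded random set of coordinates quenches average pairwise correlations, at a cost covered by the bounded entropy budget $\Phi_0-\Phi_T\le\ln|\Omega|$ --- and I expect this to be the only real point. What remains is bookkeeping: checking the identity $\Phi_t-\Phi_{t+1}=\Erw[\tfrac1n\sum_iI(\SIGMA_i;\SIGMA_{\vec i_{t+1}}\mid\SIGMA_{\vec U_t})]$ (including the harmless case $\vec i_{t+1}\in\vec U_t$), and chaining Pinsker's and Jensen's inequalities to convert the entropy decrement into the $\ell_1$-type defect of \eqref{eqepssym}. A secondary, purely quantitative point is that one should perform the Pinsker/Jensen step in the order above to get a bound that scales like a power of $\eps$; any reasonable order still yields a finite $T=T(\eps,|\Omega|)$, which is all \Thm~\ref{Thm_states} asks for.
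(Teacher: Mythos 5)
Your argument is correct, but it is not the route taken for \Thm~\ref{Thm_states} here: the theorem is quoted from \cite{Victor}, where it is proved by an iterative refinement in the spirit of the \Szemeredi\ regularity lemma, which is why the bound on $L$ there grows like a power tower in $1/\eps$. What you have written is, in essence, a self-contained proof of the pinning lemma (\Lem~\ref{Lemma_pinning}, imported in this paper from \cite{CKPZ} and going back to Montanari's information-theoretic inequality \cite{Andrea}; see also \cite{Raghavendra}): the entropy potential $\Phi_t$, the telescoping bound $\Phi_0-\Phi_T\leq\ln|\Omega|$, the identity expressing each decrement as an averaged conditional mutual information, and the Pinsker/Jensen/Markov chain are exactly that argument, after which the subcubes $S^{U,\tau}$ furnish the decomposition directly. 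Compared with the regularity-type proof, your route buys a canonical decomposition into subcubes determined by pinning a bounded set of coordinates, together with a bound $L\leq|\Omega|^{T}$ with $T=O(\eps^{-3}\ln|\Omega|)$, i.e., exponential in a polynomial in $1/\eps$ rather than a tower; this is precisely the improvement the present paper emphasizes when it recovers the conclusion of \Thm~\ref{Thm_states} from \Thm~\ref{Thm_pin} and \Prop~\ref{Cor_symmetry}, except that you bypass the cut-metric formulation and prove the pinning estimate from scratch rather than citing it. Two cosmetic remarks: the constant in Pinsker's inequality depends on whether $\TV\nix$ denotes the total variation distance or the full $\ell_1$-norm, which only shifts the admissible choice of $T$; and since a repeated index $\vec i_{t+1}\in\vec U_t$ contributes zero both to the entropy decrement and to the conditional mutual information, your key identity is in fact exact and no error term is needed there.
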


\noindent
Thus, for any $\mu\in\cP(\Omega^n)$ we can chop the cube $\Omega^n$ into a {\em bounded} number of subsets $S_1,\ldots,S_l$ that cover almost the entire probability mass such that the conditional measures $\mu[\nix|S_i]$ are $\eps$-symmetric.
\Thm~\ref{Thm_states} was proved in~\cite{Victor} by an argument akin to the proof of the \Szemeredi\ regularity lemma, an important result in extremal combinatorics~\cite{Szemeredi}.
In effect, while the number $L$ of parts is independent of $n$, in terms of $\eps$ the bound on $L$ grows like a power tower of height $1/\eps^c$ for some constant $c\geq1$.
The second viewpoint is summarized by

\begin{lemma}[{\cite[\Lem~3.5]{CKPZ}}]\label{Lemma_pinning}
For any set $\Omega$ of size $1<|\Omega|<\infty$ and any $\eps>0$ there exist $n_0>0$ and a random variable $0<\THETA\leq2\eps^{-4}\ln|\Omega|$ such that for all $n>n_0$ and all $\mu\in\cP(\Omega^n)$ the following holds.
Let $\vI\subset[n]$ be a uniform subset of size $\THETA$ and choose $\SIGMA\in\Omega^{\vI}$ from $\mu_{\vI}$.
	Then  $\pr\brk{\mu^{\vI,\SIGMA}\mbox{ is $\eps$-symmetric}}>1-\eps.$
\end{lemma}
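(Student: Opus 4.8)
\emph{Proof idea.} The natural route is the entropy-telescoping argument underlying the familiar ``pinning'' technique. The plan is first to recast $\eps$-symmetry in terms of mutual information, then to show that pinning a uniformly random coordinate decreases a bounded, monotone potential by at least the mean pairwise mutual information, so that for a random pinning depth this quantity is typically small.

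\emph{Step 1: reformulation via mutual information.} For $\nu\in\cP(\Omega^n)$ and $i\neq j$ write $I_\nu(i,j)=\KL{\nu_{i,j}}{\nu_i\tensor\nu_j}$ for the mutual information of the coordinates $i,j$, and set $\mathrm{cor}(\nu)=n^{-2}\sum_{i\neq j}I_\nu(i,j)$. Pinsker's inequality gives $\TV{\nu_{i,j}-\nu_i\tensor\nu_j}\leq\sqrt{I_\nu(i,j)/2}$, and Cauchy--Schwarz then yields $n^{-2}\sum_{i\neq j}\TV{\nu_{i,j}-\nu_i\tensor\nu_j}\leq\sqrt{\mathrm{cor}(\nu)/2}$. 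Hence $\mathrm{cor}(\nu)<2\eps^2$ already forces $\nu$ to be $\eps$-symmetric, so it suffices to produce a law for $\THETA$, depending only on $\eps$ and $|\Omega|$, with $\Pr\brk{\mathrm{cor}(\mu^{\vI,\SIGMA})\geq2\eps^2}<\eps$.

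\emph{Step 2: a bounded monotone potential.} Fix an integer $T=T(\eps,|\Omega|)$ to be chosen and consider the sequential pinning process: draw $\vi_1,\dots,\vi_T\in[n]$ uniformly without replacement, put $\vJ_t=\{\vi_1,\dots,\vi_t\}$, and draw $\SIGMA$ from $\mu_{\vJ_T}$; by the chain rule, for every $t$ the law of $\mu^{\vJ_t,\SIGMA}$ coincides with the conditional law of $\mu^{\vI,\SIGMA}$ given $\THETA=t$ when $\vI$ is a uniformly random $t$-subset of $[n]$. Define
\[
\Phi_t=\Erw\brk{\frac1n\sum_{j=1}^n H\bc{(\mu^{\vJ_t,\SIGMA})_j}},
\]
the expected average single-site entropy of the $t$-pinned measure, so that $0\leq\Phi_t\leq\ln|\Omega|$. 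Conditioning on $(\vJ_t,\SIGMA)$ and writing $\nu=\mu^{\vJ_t,\SIGMA}$: pinning one more coordinate $i$, chosen uniformly from $[n]\setminus\vJ_t$ and assigned a value drawn from $\nu_i$, replaces the single-site entropy $H_\nu(\SIGMA_j)$ by $H_\nu(\SIGMA_j\mid\SIGMA_i)$ on average over that value, so the expected decrement of $\frac1n\sum_jH((\,\cdot\,)_j)$, averaged also over the choice of $i$ and using $H_\nu(\SIGMA_j\mid\SIGMA_j)=0$, equals
\[
\frac1n\sum_{j\in[n]\setminus\vJ_t}\bc{H_\nu(\SIGMA_j)-\frac1{n-t}\sum_{i\in[n]\setminus\vJ_t}H_\nu(\SIGMA_j\mid\SIGMA_i)}=\frac1{n(n-t)}\sum_{j\in[n]\setminus\vJ_t}\bc{H_\nu(\SIGMA_j)+\sum_{i\in[n]\setminus\vJ_t,\,i\neq j}I_\nu(i,j)}.
\]
Since $H_\nu(\SIGMA_j)\geq0$, since $I_\nu(i,j)=0$ as soon as $i\in\vJ_t$ or $j\in\vJ_t$ (that coordinate is then deterministic under $\nu$), and since $n-t\leq n$, the right-hand side is at least $\mathrm{cor}(\nu)$. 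Therefore $(\Phi_t)_{t}$ is non-increasing and $\Phi_t-\Phi_{t+1}\geq\Erw\brk{\mathrm{cor}(\mu^{\vJ_t,\SIGMA})}$.

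\emph{Step 3: averaging and conclusion.} Summing the last bound telescopes to $\sum_{t=0}^{T-1}\Erw\brk{\mathrm{cor}(\mu^{\vJ_t,\SIGMA})}\leq\Phi_0-\Phi_T\leq\ln|\Omega|$. Taking $\THETA$ uniform on $\{1,\dots,T\}$ (a harmless shift of the index range that ensures $\THETA>0$) and independent of everything else, $\Erw\brk{\mathrm{cor}(\mu^{\vI,\SIGMA})}\leq T^{-1}\ln|\Omega|$, so Markov's inequality gives $\Pr\brk{\mathrm{cor}(\mu^{\vI,\SIGMA})\geq2\eps^2}\leq\ln|\Omega|/(2\eps^2T)$. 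Choosing $T$ to be the largest integer not exceeding $2\eps^{-4}\ln|\Omega|$ makes this at most $\eps/2<\eps$ (the case $\eps\geq1$ is vacuous, every measure being $\eps$-symmetric then), and then $0<\THETA\leq T\leq2\eps^{-4}\ln|\Omega|$, while $n_0:=T$ ensures that $[n]$ admits subsets of the required size. I expect the only genuinely substantive step to be Step~2 — spotting the expected average single-site entropy as a bounded, monotone potential whose consecutive drops dominate $\mathrm{cor}(\,\cdot\,)$; the remaining steps are routine, and, conveniently, sampling without replacement costs nothing because $1/(n-t)\geq1/n$.
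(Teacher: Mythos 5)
This lemma is not proved in the paper at all: it is quoted verbatim from \cite{CKPZ} (Lemma~3.5), and the paper only remarks that the proof there rests on an information-theoretic inequality going back to Montanari \cite{Andrea} — which is precisely the mechanism you use. Your argument (Pinsker plus Cauchy--Schwarz to reduce $\eps$-symmetry to the mean pairwise mutual information, then the telescoping decrement of the expected average single-site entropy under sequential pinning, followed by averaging over a random pinning depth and Markov's inequality) is correct, including the minor index shift to keep $\THETA>0$, and is essentially the same proof as in the cited source.
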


\noindent
\Lem~\ref{Lemma_pinning} shows that a certain random perturbation turns {\em any} measure $\mu\in\cP(\Omega^n)$ into an $\eps$-symmetric distribution
with high probability.
Specifically, the perturbation is to `pin'  merely a  bounded number of coordinates to the specific values observed under one random sample $\SIGMA$.
The bound on the number $\THETA$ of coordinates pinned is polynomial in $1/\eps$, tiny by comparison to a power tower.
\Thm~\ref{Thm_states} and \Lem~\ref{Lemma_pinning} have been vital ingredients in prior rigorous work on diluted mean-field models, particularly the derivation of the condensation threshold in the Potts antiferromagnet on the \Erdos-\Renyi\ graph in~\cite{CKPZ}.

But how do \Thm~\ref{Thm_states} and \Lem~\ref{Lemma_pinning} fit together?
For instance, are the measures $\mu^{\vI,\SIGMA}$ obtained from the pinning operation related to the conditional distributions $\mu[\nix|S_i]$ on the parts $S_i$ of the decomposition from \Thm~\ref{Thm_states} at all?
If so, how do the very different bounds (power tower vs.\ polynomial) reconcile?
Specifically, once we fix the set $\vI$, there is only a relative small number of $\exp(1/\eps^{O(1)})$ of possible configurations $\SIGMA$ on $\Omega^{\vI}$.
Does this mean that for a given $\vI$, we miss out on some of the parts of the partition from \Thm~\ref{Thm_states}?
Or perhaps the perturbed measures $\mu^{\vI,\SIGMA}$ amalgamate the parts of the decomposition from \Thm~\ref{Thm_states} in some way?
Besides, is the partition from \Thm~\ref{Thm_states} unique in any sense, or are there totally different ways of chopping up the set $\Omega^n$?

The main result of this section, \Thm~\ref{Thm_pin} below, shows that indeed any two decompositions of the type promised by \Thm~\ref{Thm_states} are close to each other in a certain metric, that the power tower bound for \Thm~\ref{Thm_pin} is far too pessimistic, and that the measures $\mu^{\vI,\SIGMA}$ produced by \Lem~\ref{Lemma_pinning} can indeed be identified with the parts of the decomposition promised by \Thm~\ref{Thm_pin}.
Furthermore, and this will be crucial in the proof of the Bethe state theorem in \Sec~\ref{Sec_BetheStates}, the decomposition in \Thm~\ref{Thm_states} can be chosen to be a decomposition into canonical `subcubes' of the cube $\Omega^n$.
Thus, the sets $S_1,\ldots,S_l$ have a very simple combinatorial description, and the decomposition does not actually depend much on the measure $\mu$.

To state the result formally, we introduce a metric on $\cP(\Omega^n)$, the {\em cut metric}.
Let $\Gamma(\mu,\nu)$ be the set of all couplings of probability measures $\mu,\nu\in\Omega^n$.
Thus, $\Gamma(\mu,\nu)$ contains all probability measures $\gamma$ on $\Omega^{2n}$ such that
	\begin{align*}
	\mu(\sigma)&=\sum_{\tau\in\Omega^n}\gamma(\sigma,\tau) \quad \text{ and } \quad
		\nu(\sigma)=\sum_{\tau\in\Omega^n}\gamma(\tau,\sigma)\qquad\mbox{ for every $\sigma\in\Omega^n$}.
	\end{align*}
Then we define the {\em cut metric} by
	\begin{align}\label{eqCutm}
	\Cutm(\mu,\nu)&=\frac1n\min_{\gamma\in\Gamma(\mu,\nu)}\max_{\substack{I\subset\{1,\ldots,n\}\\
		B\subset\Omega^{n}\times\Omega^n\\\omega\in\Omega}}\abs{\sum_{i\in I}\sum_{(\sigma,\tau)\in B}\gamma(\sigma,\tau)
			(\vecone\{\sigma_i=\omega\}-\vecone\{\tau_i=\omega\})}\qquad\mbox{for }\mu,\nu\in\cP(\Omega^n).
	\end{align}
Thus, we first attempt to align the measures $\mu,\nu$ as best as possible by choosing a coupling $\gamma$.
Then an adversary searches for the worst possible discrepancy under this coupling.
That is, the adversary looks for an event $B\subset\Omega^{n}\times\Omega^n$ and a set $I\subset[n]$ of coordinates where the average frequency of a specific spin $\omega$ is as different as possible under the coupling $\gamma$.
We defer the proof of the following simple fact to \Sec~\ref{Sec_metric}.

\begin{fact}\label{Fact_metric}
$\Cutm(\nix,\nix)$ is a metric on $\cP(\Omega^n)$.
\end{fact}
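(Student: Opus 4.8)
The plan is to verify the three defining properties of a metric; only the triangle inequality requires real work, and for it I would use a gluing (``composition of couplings'') argument.

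\emph{The easy properties.} For any $\mu,\nu\in\cP(\Omega^n)$ the set $\Gamma(\mu,\nu)$ is a non-empty compact subset of a simplex (it contains $\mu\otimes\nu$), and for fixed $\gamma$ the quantity $\max_{I,B,\omega}\abs{\nix}$ in \eqref{eqCutm} is the maximum over finitely many triples $(I,B,\omega)$ of the absolute value of a linear functional of $\gamma$, hence continuous in $\gamma$; therefore the infimum in \eqref{eqCutm} is attained and $\Cutm$ is $[0,\infty)$-valued. Symmetry follows because $\gamma(\sigma,\tau)\mapsto\gamma(\tau,\sigma)$ is a bijection $\Gamma(\mu,\nu)\to\Gamma(\nu,\mu)$ under which the inner quantity merely changes sign once we relabel $B$ by swapping coordinates, so the two min--max problems have the same value. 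The diagonal coupling $\gamma(\sigma,\tau)=\vecone\{\sigma=\tau\}\,\mu(\sigma)$ annihilates every summand, so $\Cutm(\mu,\mu)=0$. Conversely, if $\Cutm(\mu,\nu)=0$, let $\gamma$ be an optimal coupling; taking $I=\{i\}$ and $B=\{\sigma^0\}\times\Omega^n$ in \eqref{eqCutm} yields $\sum_{\tau:\tau_i=\omega}\gamma(\sigma^0,\tau)=\mu(\sigma^0)\vecone\{\sigma^0_i=\omega\}$, and choosing some $\omega\neq\sigma^0_i$ (possible since $\abs\Omega>1$) forces $\gamma(\sigma^0,\tau)=0$ whenever $\tau_i\neq\sigma^0_i$. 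Letting $i$ range over $[n]$ shows $\gamma$ is supported on the diagonal, whence $\mu=\nu$.

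\emph{The triangle inequality (the main step).} Given $\mu,\nu,\rho\in\cP(\Omega^n)$, choose couplings $\gamma_{12}$ and $\gamma_{23}$ attaining $\Cutm(\mu,\nu)$ and $\Cutm(\nu,\rho)$, and glue them along their common $\nu$-marginal into a measure $\gamma_{123}$ on $\Omega^{3n}$, explicitly $\gamma_{123}(\sigma,\tau,\xi)=\gamma_{12}(\sigma,\tau)\gamma_{23}(\tau,\xi)/\nu(\tau)$ when $\nu(\tau)>0$ and $0$ otherwise; then the projection of $\gamma_{123}$ onto the first two blocks is $\gamma_{12}$, onto the last two blocks is $\gamma_{23}$, and onto the first and third blocks is a coupling $\gamma_{13}$ of $\mu,\rho$. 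The crucial elementary observation is that, for a fixed coupling and fixed $I,\omega$, an optimal event $B$ in \eqref{eqCutm} may be taken to be a level set $\{g_{I,\omega}>0\}$ or $\{g_{I,\omega}<0\}$ of the discrepancy function $g_{I,\omega}(\sigma,\tau)=\sum_{i\in I}(\vecone\{\sigma_i=\omega\}-\vecone\{\tau_i=\omega\})$, so the inner maximum over $B$ depends on the coupling only through the law of the pair of coordinate blocks occurring in $g_{I,\omega}$. Now bound $\Cutm(\mu,\rho)$ via $\gamma_{13}$: pick the worst $I,\omega$ and the corresponding level set $B$ of $g_{I,\omega}(\sigma,\xi)$, lift the sum to $\gamma_{123}$, and use the telescoping identity $g_{I,\omega}(\sigma,\xi)=g_{I,\omega}(\sigma,\tau)+g_{I,\omega}(\tau,\xi)$ together with the triangle inequality for $\abs{\nix}$ to split off two terms. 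In the first term $g_{I,\omega}(\sigma,\tau)$ depends only on the first two blocks, so replacing $B$ by an optimal level set of $g_{I,\omega}(\sigma,\tau)$ — which involves only those two blocks — can only increase it, and after summing out the third block this term is at most $\max_{B'}\abs{\sum_{i\in I}\sum_{(\sigma,\tau)\in B'}\gamma_{12}(\sigma,\tau)(\vecone\{\sigma_i=\omega\}-\vecone\{\tau_i=\omega\})}\le n\,\Cutm(\mu,\nu)$, the last inequality because $\gamma_{12}$ was chosen optimal for $(\mu,\nu)$. Symmetrically the second term is at most $n\,\Cutm(\nu,\rho)$; dividing by $n$ gives $\Cutm(\mu,\rho)\le\Cutm(\mu,\nu)+\Cutm(\nu,\rho)$.

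\emph{The main obstacle.} The delicate point is the mismatch between the $\Omega^{3n}$-coupling $\gamma_{123}$ produced by gluing and the $\Omega^{2n}$-couplings that the definition of $\Cutm$ permits: one must check that pushing $\gamma_{123}$ forward onto a pair of blocks loses nothing, and that the adversary's optimal event can always be localized to the two blocks occurring in the summand. This is exactly what the level-set reduction of the inner maximum provides, so establishing that reduction carefully — the worst event is a threshold of the discrepancy function, unaffected by the presence of extra coordinates — is the real content of the argument; the rest, including the harmless case $\nu(\tau)=0$ in the gluing formula, is routine bookkeeping.
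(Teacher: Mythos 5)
Your proof is correct and follows essentially the same route as the paper: for the triangle inequality you glue the two optimal couplings along their common $\nu$-marginal and bound the split discrepancy by restricting to positive/negative level-set events in the relevant pair of blocks, which is exactly the paper's argument with $B_1,B_2$. Your additional verification of the easy axioms (attainment of the minimum, symmetry, and $\Cutm(\mu,\nu)=0\Rightarrow\mu=\nu$ via the diagonal-support argument) is sound, though the paper dismisses these as immediate.
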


The cut metric is much weaker than other standard metrics on $\cP(\Omega^n)$ such as the total variation distance.
Nonetheless, we shall see momentarily that interesting random variables that, guided by~\cite{Marinari}, we call `intensive observables', behave continuously under the cut metric.
But let us first state the result about the approximation by mixtures of product measures.
In (\ref{eqCondMu}) we defined $\mu^{I,\sigma}$ to be the conditional distribution of $\mu$ on the subcube $S^{I,\sigma}$.
Additionally, let
	\begin{align*}
	\bar\mu^{I,\sigma}&=\bigotimes_{i=1}^n\mu^{I,\sigma}_i
	\end{align*}
be the product measure with the same marginals as $\mu^{I,\sigma}$.

\begin{theorem}\label{Thm_pin}
There is an absolute constant $c>0$ such that
for any set $\Omega$ of size $1<|\Omega|<\infty$ and any $0<\eps<1/2$ there exist $n_0>0$ and a random variable $0<\THETA<((2\ln |\Omega|)/\eps)^c$ such that for all $n>n_0$ and all $\mu\in\cP(\Omega^n)$ the following statements are true.
\begin{enumerate}[(i)]
\item Let $\vI\subset[n]$ be a uniformly random subset of size $\THETA$ and choose $\SIGMA\in\Omega^{\vI}$ from $\mu_{\vI}$.
	Then 
		$
		\Erw[\Cutm(\mu^{\vI,\SIGMA},\bar\mu^{\vI,\SIGMA})]<\eps.
		$
\item Further, let 
		$\bar\mu^{\vI}=\Erw[\bar\mu^{\vI,\SIGMA}\mid\vI]\in\cP(\Omega^n).$
	Then 
	$\Erw[\Cutm(\mu,\bar\mu^{\vI})]<\eps.$
\end{enumerate}
\end{theorem}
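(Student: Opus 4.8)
The plan is to combine two ingredients. The first is a soft ``continuity'' fact: any measure that is $\eps$-symmetric in the sense of \eqref{eqepssym} already lies within $O(\sqrt\eps)$ of its own product of marginals in the cut metric. The second is the pinning lemma, \Lem~\ref{Lemma_pinning}, which says that pinning a bounded (random) number $\THETA$ of coordinates makes the conditional measure $\mu^{\vI,\SIGMA}$ $\eps$-symmetric with probability $1-\eps$. Granting the first fact, part~(i) follows by pinning and then applying it to $\nu=\mu^{\vI,\SIGMA}$; part~(ii) then follows from part~(i), the observation that $\mu$ and $\bar\mu^{\vI}$ are the \emph{same} mixture (over $\SIGMA$, with $\vI$ held fixed) of $\mu^{\vI,\SIGMA}$ and $\bar\mu^{\vI,\SIGMA}$ respectively, and the fact that $\Cutm$ does not increase under forming such a common mixture.

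First I would isolate the soft fact as a claim: there is an absolute constant $C$ such that whenever $\nu\in\cP(\Omega^n)$ satisfies $\frac1{n^2}\sum_{i\neq j}\TV{\nu_{i,j}-\nu_i\tensor\nu_j}\le\Delta$ and $n\ge1/\Delta$, one has $\Cutm(\nu,\bar\nu)\le C\sqrt\Delta$, where $\bar\nu=\bigotimes_i\nu_i$. To prove it I would not search for the optimal coupling but simply use the \emph{independent} coupling $\gamma=\nu\tensor\bar\nu$. Fixing a test triple $(I,B,\omega)$ as in \eqref{eqCutm} and writing $Y=\sum_{i\in I}\bc{\vecone\{\SIGMA_i=\omega\}-\vecone\{\TAU_i=\omega\}}$ for independent $\SIGMA\sim\nu$ and $\TAU\sim\bar\nu$, the quantity inside the absolute value in \eqref{eqCutm} is at most $\Erw|Y|\le\sqrt{\Erw[Y^2]}$ (using $\vecone_B\in[0,1]$). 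Since $\nu$ and $\bar\nu$ share all one-coordinate marginals, $\Erw Y=0$, so $\Erw[Y^2]=\Var_\nu\bc{\sum_{i\in I}\vecone\{\SIGMA_i=\omega\}}+\Var_{\bar\nu}\bc{\sum_{i\in I}\vecone\{\TAU_i=\omega\}}$. The second variance is $O(n)$ because $\bar\nu$ is a product measure; in the first, the $|I|$ diagonal terms contribute $O(n)$, while each off-diagonal covariance $\nu_{i,j}(\omega,\omega)-\nu_i(\omega)\nu_j(\omega)$ is at most $2\TV{\nu_{i,j}-\nu_i\tensor\nu_j}$ in absolute value, so the off-diagonal part is at most $2\Delta n^2$. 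Hence $\Erw[Y^2]\le O(n)+2\Delta n^2\le3\Delta n^2$ once $n\ge1/\Delta$, and dividing by $n$ proves the claim with $C=\sqrt3$.

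For part~(i) I would invoke \Lem~\ref{Lemma_pinning} with $\eps$ there replaced by a suitably small power of $\eps$ (times an absolute constant), call it $\delta=\delta(\eps)$; the resulting bound $\THETA\le2\delta^{-4}\ln|\Omega|$ is then of the advertised form $\bc{(2\ln|\Omega|)/\eps}^{c}$ for an absolute constant $c$. With $\vI,\SIGMA$ as in the statement, \Lem~\ref{Lemma_pinning} gives that $\mu^{\vI,\SIGMA}$ is $\delta$-symmetric with probability $\ge1-\delta$; on that event the claim applied to $\nu=\mu^{\vI,\SIGMA}$ (whose product of marginals is exactly $\bar\mu^{\vI,\SIGMA}$) yields $\Cutm(\mu^{\vI,\SIGMA},\bar\mu^{\vI,\SIGMA})\le C\sqrt\delta$, while on the complementary event of probability $\le\delta$ we use the trivial bound $\Cutm\le1$. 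Therefore $\Erw\brk{\Cutm(\mu^{\vI,\SIGMA},\bar\mu^{\vI,\SIGMA})}\le C\sqrt\delta+\delta$, which is below $\eps$ once $\delta$ is chosen small enough, with $n_0$ taken large enough to accommodate both \Lem~\ref{Lemma_pinning} and the requirement $n\ge1/\delta$ used in the claim. For part~(ii) I would first record the tower identity $\mu=\Erw\brk{\mu^{\vI,\SIGMA}\mid\vI}$: averaging $\mu^{\vI,\sigma}(\tau)=\vecone\{\tau|_\vI=\sigma\}\,\mu(\tau)/\mu(S^{\vI,\sigma})$ against $\sigma\sim\mu_\vI$ and using $\mu_\vI(\sigma)=\mu(S^{\vI,\sigma})$ collapses the sum to $\mu(\tau)$; by definition $\bar\mu^{\vI}=\Erw\brk{\bar\mu^{\vI,\SIGMA}\mid\vI}$ as well. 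Next, averaging (with $\vI$ fixed) the optimal couplings of the pairs $(\mu^{\vI,\SIGMA},\bar\mu^{\vI,\SIGMA})$ over $\SIGMA$ produces a coupling of $(\mu,\bar\mu^{\vI})$ under which the quantity inside the absolute value in \eqref{eqCutm} is, by the triangle inequality, at most $n\,\Erw\brk{\Cutm(\mu^{\vI,\SIGMA},\bar\mu^{\vI,\SIGMA})\mid\vI}$; taking the expectation over $\vI$ and applying part~(i) gives $\Erw\brk{\Cutm(\mu,\bar\mu^{\vI})}\le\Erw\brk{\Cutm(\mu^{\vI,\SIGMA},\bar\mu^{\vI,\SIGMA})}<\eps$, as desired.

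The main obstacle is the claim, and the point to get right is conceptual rather than computational: one must resist trying to couple $\nu$ and $\bar\nu$ so that they agree on most coordinates (which is impossible in general -- parity-type measures are a counterexample, since conditioning on many coordinates can still be far from the marginal), and instead observe that the cut discrepancy under the crude independent coupling is already controlled by a single second moment, which in turn is governed by exactly the pairwise decorrelation that defines $\eps$-symmetry. A secondary, routine issue is the convexity of $\Cutm$ under mixtures, together with the (standard) measurable selection of couplings it tacitly uses, and the elementary bookkeeping that converts the polynomial-in-$1/\eps$ bound of \Lem~\ref{Lemma_pinning} into the stated form $\bc{(2\ln|\Omega|)/\eps}^{c}$.
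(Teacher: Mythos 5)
Your proposal is correct and follows essentially the same route as the paper: the key lemma that $\eps$-symmetry forces cut-metric closeness to the product of marginals is proved via the independent coupling and a second-moment/pairwise-decorrelation bound (you use Cauchy--Schwarz where the paper uses Chebyshev, a cosmetic difference), and parts (i) and (ii) then follow from the pinning lemma together with convexity of $\Cutm$ under common mixtures, exactly as in the paper's Lemma~\ref{Lemma_xtriangle} argument.
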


Crucially, as in \Lem~\ref{Lemma_pinning} the random variable $\THETA$ in \Thm~\ref{Thm_pin} is bounded by a number that depends on $\eps,\Omega$ but not on $n$ or $\mu$.
Thus, (i) shows that the distribution obtained from $\mu$ by merely pinning a bounded number of randomly chosen coordinates to the specific spins observed under a random configuration $\SIGMA$ is close to a product measure in the cut metric.
Additionally, (ii) shows that for a typical choice of $\vI$ the measure $\mu$ itself is close to a mixture of a bounded number of product measures 
	$$\Erw[\bar\mu^{\vI,\SIGMA}\mid\vI]=\sum_{\sigma\in\Omega^{\vI}}\mu_{\vI}(\sigma)\bar\mu^{\vI,\sigma}$$
	in the cut metric.
In fact, these product measures are obtained in a very simple, canonical way:
we just go over all $|\Omega|^{\THETA}$  possible spin configurations $\sigma$ on the coordinates $\vI$ and add in the product measure $\bar\mu^{\vI,\sigma}$ with weight $\mu_{\vI}(\sigma)$.
In particular, the number of product measures in this mixture is bounded by $\exp(1/\eps^{O(1)})$, rather than by a power tower as in \Thm~\ref{Thm_states}.

While \Thm~\ref{Thm_states} and \Lem~\ref{Lemma_pinning} are based on the concept of $\eps$-symmetry, 
\Thm~\ref{Thm_pin} speaks of cut metric approximations.
The following proposition shows that these two concepts are intimately related.

\begin{proposition}\label{Cor_symmetry}
For any $\Omega$ of size $1<\abs\Omega<\infty$, any $0<\eps<1/2$ and  any $k\geq2$ there exists $n_0>0$ such that for all $n>n_0$ and all $\mu\in\cP(\Omega^n)$ the following two statements hold. 
\begin{enumerate}[(i)]
\item If $\mu$ is $(\eps/9)^3$-symmetric, then $\Cutm(\mu,\bigotimes_{i=1}^n\mu_i)<\eps$.
\item If  $\Cutm(\mu,\bigotimes_{i=1}^n\mu_i)<\eps^4/(128|\Omega|)^{4k}$, then 
	$\sum_{1\leq i_1<\cdots<i_k\leq n}\TV{\mu_{i_1,\ldots,i_k}-\mu_{i_1}\tensor\cdots\tensor\mu_{i_k}}<\eps n^k.$
\end{enumerate}
\end{proposition}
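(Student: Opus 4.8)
For part~(i), the plan is to show that $\eps$-symmetry of $\mu$ forces the coupling $\gamma$ given by the \emph{identity coupling} (i.e. the diagonal measure that pairs $\sigma$ with itself under $\mu$) between $\mu$ and $\bigotimes_i\mu_i$... no — that coupling realises $\mu$ against $\mu$, not against the product. Instead I would pick the \emph{independent} coupling $\gamma=\mu\otimes\bigotimes_i\mu_i$ and estimate the cut-norm discrepancy directly. For fixed $I\subset[n]$, $B\subset\Omega^n\times\Omega^n$ and $\omega\in\Omega$, the quantity $\sum_{i\in I}\sum_{(\sigma,\tau)\in B}\gamma(\sigma,\tau)(\vecone\{\sigma_i=\omega\}-\vecone\{\tau_i=\omega\})$ can be bounded after summing out the irrelevant coordinates: writing $B$ in terms of its sections, the expression reduces to something controlled by $\sum_{i\in I}|\mathrm{Cov}_\mu(\vecone\{\sigma_i=\omega\},\mathbf{1}_{A})|$ for events $A$ depending on the other copy, and more importantly by second-moment-type quantities $\langle(\vecone\{\SIGMA^1_i=\omega\}-\mu_i(\omega))(\vecone\{\SIGMA^2_i=\omega\}-\mu_i(\omega))\rangle$ summed over $i$. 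A standard computation (as in~\cite{Victor}) shows $\eps'$-symmetry implies $\frac1{n^2}\sum_{i,j}|\langle(\vecone\{\SIGMA_i=\omega\}-\mu_i(\omega))(\vecone\{\SIGMA_j=\omega\}-\mu_j(\omega))\rangle|$ is small, and a Cauchy--Schwarz / convexity step upgrades the $i\neq j$ control to control of $\frac1n\sum_i\langle\cdots\rangle$ being small, hence the single-sum cut discrepancy is $O(\eps'^{1/3})n$ per $(I,B,\omega)$, uniformly. Choosing $\eps'=(\eps/9)^3$ then yields $\Cutm<\eps$.

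For part~(ii), I would argue contrapositively in spirit: from smallness of $\Cutm(\mu,\bar\mu)$ with $\bar\mu=\bigotimes_i\mu_i$, first deduce that for all but an $O(\Cutm^{1/2})$ fraction of coordinates $i$ the marginals agree (they agree exactly here since $\bar\mu$ has the same marginals, so this is free), and more substantively deduce a bound on pair correlations: testing the cut metric against cleverage sets $I$ and $B=\{\sigma:\sigma_j=\omega'\}\times\Omega^n$ etc. extracts $\frac1{n^2}\sum_{i,j}|\mu_{i,j}(\omega,\omega')-\mu_i(\omega)\mu_j(\omega')|$, so $\Cutm(\mu,\bar\mu)$ small gives $\mu$ is $\delta$-symmetric with $\delta$ polynomial in $\Cutm$ and $|\Omega|$. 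Then I would bootstrap from pairs to $k$-tuples: a telescoping / union argument shows that if $\mu$ is $\delta$-symmetric then $\frac1{n^k}\sum_{i_1<\cdots<i_k}\TV{\mu_{i_1,\dots,i_k}-\bigotimes_{t}\mu_{i_t}}$ is bounded by a function of $\delta$ and $k$ that tends to $0$; concretely one writes $\mu_{i_1,\dots,i_k}-\bigotimes\mu_{i_t}$ as a telescoping sum of $k-1$ differences each of which, after averaging, is controlled by pairwise covariances via an application of the pinning lemma (\Lem~\ref{Lemma_pinning}) or by iterating the two-coordinate estimate. Tracking constants gives the stated thresholds $\eps^4/(128|\Omega|)^{4k}$ and conclusion $<\eps n^k$.

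The main obstacle I anticipate is the \textbf{direction from cut-metric smallness to $k$-point decorrelation} in part~(ii): the cut metric only sees \emph{one} spin value $\omega$ and \emph{linear} (single-coordinate-sum) test functionals at a time, whereas $\TV{\mu_{i_1,\dots,i_k}-\bigotimes\mu_{i_t}}$ is a genuinely multilinear, all-of-$\Omega^k$ object. Bridging this gap requires (a) a robust argument that controlling $\frac1{n^2}\sum_{i,j}\TV{\mu_{i,j}-\mu_i\otimes\mu_j}$ suffices to control the $k$-tuple sum --- the natural tool is the pinning lemma, which says pinning $\Theta$ coordinates makes the measure $\delta$-symmetric, and then a sample-and-condition argument distributes the $k$ test coordinates around the pinned block --- and (b) being careful that the quantifier ``for all but $\eps n^k$ tuples'' survives the averaging, i.e. that no small exceptional set of coordinates (where marginals or correlations misbehave) blows up when raised to the $k$-th power; this is handled by a union bound costing a factor $k$, which is why the final $\eps$-threshold degrades like $|\Omega|^{-O(k)}$. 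Part~(i), by contrast, I expect to be a direct if slightly fiddly second-moment computation with no conceptual obstruction.
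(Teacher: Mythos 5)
Your part~(i) is fine and coincides with the paper's route: take the independent coupling $\gamma=\mu\tensor\bigotimes_i\mu_i$ and show via a Chebyshev second--moment bound that the linear statistics $X_{I,\omega}(\SIGMA)=\sum_{i\in I}\vecone\{\SIGMA_i=\omega\}$ concentrate under both measures when $\mu$ is $(\eps/9)^3$-symmetric; this is exactly \Lem~\ref{Lemma_cutm}, which the paper then quotes.

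Part~(ii), however, has a genuine gap at its first and most important step. You claim that testing the cut metric against sets such as $B=\{\sigma:\sigma_j=\omega'\}\times\Omega^n$ ``extracts'' $\frac1{n^2}\sum_{i,j}\abs{\mu_{i,j}(\omega,\omega')-\mu_i(\omega)\mu_j(\omega')}$, i.e.\ that cut-metric closeness to the product measure directly yields pairwise $\delta$-symmetry. But with such a $B$ the discrepancy under the optimal coupling $\gamma$ is $\abs{\sum_{i\in I}\bc{\mu_{i,j}(\omega,\omega')-\Erw_\gamma\brk{\vecone\{\SIGMA_j=\omega'\}\vecone\{\TAU_i=\omega\}}}}$, and the cross term involves the \emph{unknown} coupling: $\SIGMA$ and $\TAU$ may be strongly correlated under $\gamma$, so there is no way to replace $\Erw_\gamma\brk{\vecone\{\SIGMA_j=\omega'\}\vecone\{\TAU_i=\omega\}}$ by $\mu_j(\omega')\mu_i(\omega)$; choosing $B$ to depend on the $\tau$-side instead just moves the problem to the other cross term. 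The cut metric only controls joint laws of macroscopic linear statistics under \emph{some} coupling, and converting this into decorrelation requires a coupling-free mechanism. The paper supplies it: condition on a subcube event $S=S_{i_1,\ldots,i_{k-1}}(\omega_1,\ldots,\omega_{k-1})$ of the $\sigma$-side with $\mu(S)\geq\eps/(4|\Omega|^{k-1})$; if the set $I$ of coordinates whose conditional marginal of $\omega_k$ drops by $\eps/(4|\Omega|)$ were large, Markov's inequality puts conditional mass at least $\eps/(16|\Omega|)$ on $\{\sum_{i\in I}\vecone\{\sigma_i=\omega_k\}$ small$\}$, while Chebyshev for the product measure $\bar\mu$ forces concentration above that level, and plugging $B=(S\setminus T)\times T'$ into the cut metric yields a contradiction with $\Cutm(\mu,\bar\mu)<\delta$. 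This conditioning-plus-concentration argument is the heart of the proof and is absent from your sketch; as written, the pairwise step is asserted rather than proved. Note also that the paper then passes from $k-1$ to $k$ by induction entirely within this scheme (bounding only the one-sided deficiencies $\max\{0,\mu_{i_1,\ldots,i_{k-1}}\tensor\mu_{i_k}-\mu_{i_1,\ldots,i_k}\}$ over tuples in $J$), rather than first reducing to pairwise symmetry and bootstrapping via the pinning lemma; your pinning-based bootstrap might be salvageable using known ``$\eps$-symmetric implies $k$-wise symmetric'' results, but it is not needed here and would not obviously produce the explicit threshold $\eps^4/(128|\Omega|)^{4k}$, whose form comes precisely from this induction.
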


\noindent
Combining \Thm~\ref{Thm_pin} and \Prop~\ref{Cor_symmetry}, we conclude that \Thm~\ref{Thm_pin} provides a decomposition with the same properties as \Thm~\ref{Thm_states}, but with an exponential rather than power tower bound.

Conversely, the following result shows that any decomposition like in \Thm~\ref{Thm_states} yields a cut norm approximation.

\begin{proposition}\label{Cor_states}
Suppose that $\Omega$ is a set of size $1<\abs\Omega<\infty$, that $0<\eps<1/2$, $\mu\in\cP(\Omega^n)$ for a large enough $n>n_0(\eps,\Omega)$ and that $S_1,\ldots,S_l\subset\Omega^n$ are pairwise disjoint sets such that
	\begin{enumerate}[(i)]
	\item $\sum_{i=1}^l\mu(S_i)>1-\eps$,
	\item $\mu(S_i)>0$ and the conditional distribution $\mu[\nix|S_i]$ is  $(\eps/9)^3$-symmetric for each $1\leq i\leq l$.
	\end{enumerate}
Let $z=\sum_{h=1}^l\mu(S_h)$.
Then
	\begin{align*}
	\Cutm\bc{\mu,\frac1z\sum_{i=1}^l\mu(S_i)\bigotimes_{j=1}^n\mu_j[\nix|S_i]}<2\eps.
	\end{align*}
\end{proposition}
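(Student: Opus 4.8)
The plan is to route from $\mu$ to the target mixture
\[
\bar\nu:=\frac1z\sum_{i=1}^l\mu(S_i)\bigotimes_{j=1}^n\mu_j[\nix|S_i]
\]
through the single intermediate measure $\nu:=\mu[\nix|S]$, where $S:=S_1\cup\cdots\cup S_l$, and then to close up with the triangle inequality for $\Cutm$ (Fact~\ref{Fact_metric}). Since the $S_i$ are pairwise disjoint we have $\nu=\frac1z\sum_{i=1}^l\mu(S_i)\,\mu[\nix|S_i]$, so $\nu$ has the same convex-combination shape as $\bar\nu$: the $i$-th term of $\bar\nu$ is simply the product measure whose marginals agree with those of the $i$-th term $\mu[\nix|S_i]$ of $\nu$.

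The first step is to bound $\Cutm(\mu,\nu)$. Reading off \eqref{eqCutm} one gets the crude estimate $\Cutm(\mu,\nu)\le\TV{\mu-\nu}$: for any coupling $\gamma\in\Gamma(\mu,\nu)$ and any pair $(\sigma,\tau)$, the inner sum $\sum_{i\in I}(\vecone\{\sigma_i=\omega\}-\vecone\{\tau_i=\omega\})$ vanishes on coordinates with $\sigma_i=\tau_i$, hence has modulus at most $n$; taking $\gamma$ to be an optimal total-variation coupling turns this into the claimed bound. Since $\nu=\mu[\nix|S]$ with $\mu(S)=z$, a one-line computation gives $\TV{\mu-\nu}=1-z$, and hypothesis (i) yields $1-z<\eps$, so $\Cutm(\mu,\nu)<\eps$.

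The second step is to bound $\Cutm(\nu,\bar\nu)$, and here I would use the convexity of the cut metric along mixtures with matched weights: if $\mu'=\sum_i\lambda_i\mu_i'$ and $\nu'=\sum_i\lambda_i\nu_i'$ for the same $\lambda_i\ge0$ with $\sum_i\lambda_i=1$, then $\Cutm(\mu',\nu')\le\sum_i\lambda_i\Cutm(\mu_i',\nu_i')$. This follows by gluing optimal couplings $\gamma_i\in\Gamma(\mu_i',\nu_i')$ into $\gamma:=\sum_i\lambda_i\gamma_i\in\Gamma(\mu',\nu')$ and distributing the absolute value in \eqref{eqCutm} over the sum over $i$; the resulting bound no longer involves $(I,B,\omega)$, so the adversary's maximum passes through. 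Applying this with $\lambda_i=\mu(S_i)/z$, $\mu_i'=\mu[\nix|S_i]$ and $\nu_i'=\bigotimes_j\mu_j[\nix|S_i]$, and recalling that $\bigotimes_j\mu_j[\nix|S_i]$ is exactly the product measure with the marginals of $\mu[\nix|S_i]$, hypothesis (ii) together with \Prop~\ref{Cor_symmetry}(i) gives $\Cutm(\mu[\nix|S_i],\bigotimes_j\mu_j[\nix|S_i])<\eps$ for every $i$ (this is the step that forces $n>n_0(\eps,\Omega)$). Since $\sum_i\lambda_i=1$, we conclude $\Cutm(\nu,\bar\nu)<\eps$.

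Combining the two estimates through the triangle inequality gives $\Cutm(\mu,\bar\nu)\le\Cutm(\mu,\nu)+\Cutm(\nu,\bar\nu)<2\eps$, which is the assertion. The only part that demands more than bookkeeping is the matched-mixture convexity of $\Cutm$ used in the second step — one must verify that the glued coupling $\gamma$ really has the prescribed marginals and that exchanging the supremum over $(I,B,\omega)$ with the $\lambda$-weighted sum goes in the admissible direction — but this is a soft, $\mu$-independent property of the cut metric, so I anticipate no genuine obstacle; the analytic content of the proposition is carried entirely by \Prop~\ref{Cor_symmetry}(i).
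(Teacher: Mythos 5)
Your proof is correct and rests on the same two ingredients as the paper's own argument: the passage from $(\eps/9)^3$-symmetry to a cut-metric approximation by the product of the marginals (\Lem~\ref{Lemma_cutm}, equivalently \Prop~\ref{Cor_symmetry}(i)), and the convexity of $\Cutm$ along matched mixtures, which is precisely \Lem~\ref{Lemma_xtriangle}, so you need not reprove it. The only difference is how the exceptional mass is handled: the paper writes $\mu$ as a mixture over $S_0=\Omega^n\setminus(S_1\cup\cdots\cup S_l)$ and $S_1,\ldots,S_l$, bounds the $S_0$ term by the trivial estimate $\Cutm\leq1$ and invokes \Lem~\ref{Lemma_xtriangle}, whereas you first replace $\mu$ by $\mu[\nix|S_1\cup\cdots\cup S_l]$ via the elementary bound $\Cutm(\mu,\mu[\nix|S])\leq\TV{\mu-\mu[\nix|S]}=1-z<\eps$ and only then apply the matched-mixture bound over the good parts. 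Your variant has the small advantage of treating the renormalization by $z$ explicitly (the paper's terse write-up compares $\mu$ to a mixture that still contains an $S_0$ term and leaves the final adjustment to the stated target implicit), at the cost of one extra, but easy, observation that $\Cutm$ is dominated by total variation.
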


Finally, what other conclusions can be drawn from the cut metric approximation?
Let us call a function
	$$f:(\Omega^n)^k\to[0,1]$$ a {\em $(k,l)$-intensive observable} if there exist 
$I_1,\ldots,I_l\subset\{1,\ldots,n\}$ and $\tau_1^{(j)},\ldots,\tau_l^{(j)}\in\Omega$, $j=1,\ldots,k$, such that
	\begin{align*}
	f(\sigma^{(1)},\ldots,\sigma^{(k)})&=
	\frac1{n^l}\sum_{i_1\in I_1,\ldots,i_l\in I_l}\prod_{j=1}^k\vecone\{\sigma^{(j)}_{i_1}=\tau_1^{(j)},\ldots,\sigma^{(j)}_{i_l}=\tau_l^{(j)}\}.
	\end{align*}
Then as a consequence of \Thm~\ref{Thm_pin} we obtain

\begin{corollary}\label{Cor_pin}
For any $\Omega$, $\eps>0$ and any $k,l$ there exist $\delta>0$ and $n_0>0$ such that for all $n>n_0$ and all $\mu,\nu\in\cP(\Omega^n)$ the following is true.
If $\Cutm(\mu,\nu)<\delta$, then for every $(k,l)$-intensive observable $f$ we have
	$$\abs{\scal{f(\SIGMA^{(1)},\ldots,\SIGMA^{(k)})}{\mu}-\scal{f(\SIGMA^{(1)},\ldots,\SIGMA^{(k)})}{\nu}}<\eps.$$
\end{corollary}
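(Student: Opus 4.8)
The plan is to prove the quantitative Lipschitz estimate
\[
\abs{\scal{f(\SIGMA^{(1)},\ldots,\SIGMA^{(k)})}{\mu}-\scal{f(\SIGMA^{(1)},\ldots,\SIGMA^{(k)})}{\nu}}\le 2kl\,\Cutm(\mu,\nu)
\]
for every $(k,l)$-intensive observable $f$ and all $\mu,\nu\in\cP(\Omega^n)$; the corollary then follows directly from the definition (\ref{eqCutm}) with $\delta=\eps/(2kl)$ (and $n_0$ may be taken to be $1$). First I would fix a coupling $\gamma\in\Gamma(\mu,\nu)$ attaining the minimum in (\ref{eqCutm}), which exists because $\Gamma(\mu,\nu)$ is compact and the inner maximum ranges over the finite set of triples $(I,B,\omega)$. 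Drawing $k$ mutually independent pairs $(\SIGMA^{(j)},\TAU^{(j)})$, $j\in[k]$, from $\gamma$ makes the $\SIGMA^{(j)}$ i.i.d.\ with law $\mu$ and the $\TAU^{(j)}$ i.i.d.\ with law $\nu$. I would then run a hybrid argument: set $H_h=\Erw\brk{f(\SIGMA^{(1)},\ldots,\SIGMA^{(h)},\TAU^{(h+1)},\ldots,\TAU^{(k)})}$ for $h=0,\ldots,k$, so $H_k=\scal{f(\SIGMA^{(1)},\ldots,\SIGMA^{(k)})}{\mu}$ and $H_0=\scal{f(\SIGMA^{(1)},\ldots,\SIGMA^{(k)})}{\nu}$, and reduce to bounding each increment $\abs{H_h-H_{h-1}}$ by $2l\,\Cutm(\mu,\nu)$.

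For a single increment, write $f(\sigma^{(1)},\ldots,\sigma^{(k)})=n^{-l}\sum_{\vec i}\prod_{j=1}^k g_j(\sigma^{(j)})$ with $\vec i=(i_1,\ldots,i_l)$ ranging over $I_1\times\cdots\times I_l$ and $g_j(\sigma)=\prod_{s=1}^l\vecone\{\sigma_{i_s}=\tau^{(j)}_s\}$. Since the $k$ pairs are independent, the factors with $j\ne h$ pull out of $\Erw\brk{\cdot}$ in $H_h-H_{h-1}$ and lie in $[0,1]$, so $\abs{H_h-H_{h-1}}\le n^{-l}\sum_{\vec i}\abs{\Erw_\gamma\brk{g_h(\SIGMA)-g_h(\TAU)}}$. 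Next I would telescope the length-$l$ product over $s$: with $\omega_s:=\tau^{(h)}_s$ one has $g_h(\SIGMA)-g_h(\TAU)=\sum_{s=1}^l\vecone_{E_s}\bc{\vecone\{\SIGMA_{i_s}=\omega_s\}-\vecone\{\TAU_{i_s}=\omega_s\}}$, where $E_s=\{\SIGMA_{i_{s'}}=\omega_{s'}\ \forall s'<s\}\cap\{\TAU_{i_{s'}}=\omega_{s'}\ \forall s'>s\}\subset\Omega^n\times\Omega^n$ is an event depending only on the indices $(i_{s'})_{s'\ne s}$.

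At the final step the cut metric does the work. Fixing $s$ and the indices $(i_{s'})_{s'\ne s}$ and setting $B:=E_s$, I would split $I_s$ into the indices $i_s$ for which $\Erw_\gamma\brk{\vecone_B\bc{\vecone\{\SIGMA_{i_s}=\omega_s\}-\vecone\{\TAU_{i_s}=\omega_s\}}}$ is nonnegative and its complement, and apply (\ref{eqCutm}) to the two resulting triples (same $B$, same $\omega_s$), obtaining $\sum_{i_s\in I_s}\abs{\Erw_\gamma\brk{\vecone_B\bc{\vecone\{\SIGMA_{i_s}=\omega_s\}-\vecone\{\TAU_{i_s}=\omega_s\}}}}\le 2n\,\Cutm(\mu,\nu)$. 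Summing over the at most $n^{l-1}$ choices of $(i_{s'})_{s'\ne s}$ and then over the $l$ values of $s$ yields $\sum_{\vec i}\abs{\Erw_\gamma\brk{g_h(\SIGMA)-g_h(\TAU)}}\le 2ln^l\,\Cutm(\mu,\nu)$, hence $\abs{H_h-H_{h-1}}\le 2l\,\Cutm(\mu,\nu)$; summing over $h\in[k]$ finishes the estimate.

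The computation is otherwise elementary bookkeeping, and I expect the only point requiring genuine care — the ``hard part'' — to be that the cut metric controls \emph{only signed} sums $\sum_{i\in I}\sum_{(\sigma,\tau)\in B}\gamma(\sigma,\tau)(\vecone\{\sigma_i=\omega\}-\vecone\{\tau_i=\omega\})$. This forces the two slightly delicate moves above: telescoping the product of $l$ indicators so that each single-coordinate difference carries a prefactor $\vecone_{E_s}$ that is a \emph{bona fide} event (a legitimate choice of $B$), and partitioning each index block $I_s$ by the sign of the contribution before invoking (\ref{eqCutm}). The hybrid argument over the $k$ samples and the observation that the spectator factors are bounded by $1$ are routine; there is no analytic obstacle, and the constant $2kl$ is the product of the factor $2$ from the sign split, the factor $k$ from the hybrid telescoping over samples, and the factor $l$ from the telescoping over coordinates.
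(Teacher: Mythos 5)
Your proposal is correct, and it takes a genuinely different route from the paper. The paper first reduces to the case $k=1$ via \Lem~\ref{Lemma_prod} (stability of $\Cutm$ under taking products of measures), and then invokes the heavy machinery of \Thm~\ref{Thm_pin} and \Prop~\ref{Cor_symmetry}: both $\mu$ and $\nu$ are decomposed into parts with $\xi$-symmetric conditionals, an optimal coupling is used to match the parts, the single-coordinate marginals of matched parts are shown to be close on average (this is where sets of the form $I_{i,j}(\omega)$ and Chebyshev arguments enter), and finally the $\xi$-symmetry is used to replace conditional joint distributions by products of marginals. That argument yields an implicit $\delta=\delta(\eps,\Omega,k,l)$ and needs $n>n_0$. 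Your argument instead works directly from the definition \eqref{eqCutm}: an optimal coupling, a hybrid interpolation over the $k$ independent samples, a telescoping over the $l$ coordinates so that each single-coordinate difference is weighted by a bona fide event $E_s$ (a legitimate choice of $B$), and a sign split of each index block before applying the cut-norm bound. The two delicate points you flag are exactly the right ones, and they are handled correctly (note in particular that possible coincidences among $i_1,\ldots,i_l$ cause no harm, since $E_s$ is fixed once $(i_{s'})_{s'\neq s}$ is fixed and the algebraic telescoping identity holds regardless). What your route buys is an explicit, $n$-uniform Lipschitz estimate $\abs{\scal{f}{\mu}-\scal{f}{\nu}}\le 2kl\,\Cutm(\mu,\nu)$, so $\delta=\eps/(2kl)$ and no $n_0$ is needed; what the paper's route buys is mainly economy of means within its own framework, reusing the decomposition technology it has already built (and which it needs elsewhere), at the cost of non-explicit constants.
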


\noindent
In other words, intensive observables are uniformly continuous with respect to the cut metric.

A further random variable that behaves continuously with respect to the cut metric is the overlap, which plays a key role in the physicists' cavity method.
The {\em overlap of two configurations} $\sigma,\tau\in\Omega^n$ is defined as the probability distribution $\rho_{\sigma,\tau}(\omega,\omega')$ on $\Omega\times\Omega$ with
	$$\rho_{\sigma,\tau}(\omega,\omega')=\frac1n\sum_{i=1}^n\vecone\{\sigma_i=\omega,\tau_i=\omega'\}.$$
Furthermore, the {\em overlap  $\cO_\mu$ of a measure} $\mu\in\cP(\Omega^n)$ is the distribution of $\rho_{\SIGMA,\TAU}$ with $\SIGMA,\TAU$ chosen independently from $\mu$.
Thus, $\cO_\mu\in\cP^2(\Omega\times\Omega)$.
As another application of \Thm~\ref{Thm_pin} we obtain

\begin{corollary}\label{Cor_overlap}
For any $\Omega$ and any $\eps>0$ there exist $\delta>0$ and $n_0>0$ such that for all $n>n_0$ and all $\mu,\nu\in\cP(\Omega^n)$ the following is true.
If $\Cutm(\mu,\nu)<\delta$, then $\cD_1(\cO_\mu,\cO_\nu)<\eps$.
\end{corollary}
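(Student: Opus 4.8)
The plan is to derive Corollary~\ref{Cor_overlap} from Corollary~\ref{Cor_pin}. The key observation is that every mixed moment of the overlap distribution $\cO_\mu$ is, up to a normalising factor, an intensive observable, so Corollary~\ref{Cor_pin} controls all of them at once; and since all overlaps live in the $n$-independent compact simplex $\cP(\Omega\times\Omega)$, controlling the low-degree moments suffices to control the Wasserstein distance $\cD_1$.

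First I would record the moment identity. Fix a multi-index $\alpha=(\alpha_{\omega,\omega'})_{\omega,\omega'\in\Omega}$ of degree $|\alpha|=m\geq1$ and list the pairs of $\Omega\times\Omega$ with multiplicities prescribed by $\alpha$, say $(\omega_1,\omega_1'),\ldots,(\omega_m,\omega_m')$. Then for all $\sigma,\tau\in\Omega^n$,
\begin{align*}
\prod_{\omega,\omega'\in\Omega}\rho_{\sigma,\tau}(\omega,\omega')^{\alpha_{\omega,\omega'}}
=\prod_{t=1}^m\bc{\frac1n\sum_{i_t=1}^n\vecone\{\sigma_{i_t}=\omega_t,\,\tau_{i_t}=\omega_t'\}}
=\frac1{n^m}\sum_{i_1,\ldots,i_m\in[n]}\prod_{t=1}^m\vecone\{\sigma_{i_t}=\omega_t,\,\tau_{i_t}=\omega_t'\},
\end{align*}
and the right-hand side is exactly a $(2,m)$-intensive observable evaluated at $(\sigma,\tau)$: take $k=2$, $l=m$, $I_1=\cdots=I_m=[n]$ and $\tau_t^{(1)}=\omega_t$, $\tau_t^{(2)}=\omega_t'$. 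Averaging this observable over two independent samples $\SIGMA,\TAU\sim\mu$ therefore yields the mixed moment $\Erw_{\cO_\mu}\brk{\prod_{\omega,\omega'}\rho(\omega,\omega')^{\alpha_{\omega,\omega'}}}$, where $\rho$ denotes $\rho_{\SIGMA,\TAU}$. Applying Corollary~\ref{Cor_pin} once for $k=2$ and each $l\in\{1,\ldots,M\}$ with accuracy $\eta$, I obtain: for every $M\in\NN$ and $\eta>0$ there are $\delta_0=\delta_0(\Omega,M,\eta)>0$ and $n_0$ such that $\Cutm(\mu,\nu)<\delta_0$ forces all overlap moments of $\cO_\mu$ and $\cO_\nu$ of degree at most $M$ to agree up to $\eta$.

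Next I would upgrade ``all low-degree moments close'' to ``$\cD_1$-close''. By Kantorovich--Rubinstein duality, $\cD_1(\cO_\mu,\cO_\nu)=\sup_g\abs{\Erw_{\cO_\mu}g-\Erw_{\cO_\nu}g}$, the supremum over all $g\colon\cP(\Omega\times\Omega)\to\RR$ that are $1$-Lipschitz for the total variation metric. Subtracting the value of $g$ at a fixed point affects neither expectation, so we may assume $\norm g_\infty\leq\mathrm{diam}\,\cP(\Omega\times\Omega)\leq1$; such $g$ form a compact subset $\cL\subset\cC(\cP(\Omega\times\Omega))$ by Arzel\`a--Ascoli, and polynomials in the $|\Omega|^2$ coordinates $\rho(\omega,\omega')$ are dense in $\cC(\cP(\Omega\times\Omega))$ by Stone--Weierstrass. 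Given $\eps>0$, cover $\cL$ by finitely many sup-norm balls of radius $\eps/6$ about $g_1,\ldots,g_N$, choose polynomials $P_1,\ldots,P_N$ with $\norm{P_j-g_j}_\infty<\eps/6$, and let $M$ be the largest degree and $C$ the largest absolute value of a coefficient among the $P_j$; both depend only on $\eps$ and $\Omega$. Apply the previous paragraph with this $M$ and with $\eta=\eps/\bc{6C(M+1)^{|\Omega|^2}}$ to obtain $\delta$ and $n_0$. Then for any $1$-Lipschitz $g$, picking $g_j$ with $\norm{g-g_j}_\infty<\eps/6$,
\begin{align*}
\abs{\Erw_{\cO_\mu}g-\Erw_{\cO_\nu}g}
\leq2\norm{g-g_j}_\infty+2\norm{P_j-g_j}_\infty+\abs{\Erw_{\cO_\mu}P_j-\Erw_{\cO_\nu}P_j}
<\frac\eps3+\frac\eps3+\frac\eps6<\eps,
\end{align*}
the last summand being bounded by the number of monomials (at most $(M+1)^{|\Omega|^2}$) times $C$ times $\eta$. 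Taking the supremum over $g$ yields $\cD_1(\cO_\mu,\cO_\nu)<\eps$, with $\delta$ and $n_0$ depending only on $\eps$ and $\Omega$.

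The one point that needs care is the order of quantifiers: the net $g_1,\ldots,g_N$, the degree $M$ and the coefficient bound $C$ must be fixed first (all depending only on $\eps$ and $\Omega$), and only afterwards is $\delta$ extracted through finitely many invocations of Corollary~\ref{Cor_pin}; this is legitimate precisely because the ambient simplex $\cP(\Omega\times\Omega)$ does not grow with $n$. Everything else is routine approximation. (Alternatively, the final step can be run by soft compactness: were the corollary false, there would be $\mu_k,\nu_k$ with $\Cutm(\mu_k,\nu_k)\to0$ but $\cD_1(\cO_{\mu_k},\cO_{\nu_k})\geq\eps$; passing to weakly convergent subsequences $\cO_{\mu_k}\to P$ and $\cO_{\nu_k}\to Q$ in the compact space $\cP^2(\Omega\times\Omega)$, the moment identity together with Corollary~\ref{Cor_pin} forces all moments of $P$ and $Q$ to coincide, whence $P=Q$ because moments determine measures on a compact set, contradicting $\cD_1\geq\eps$.)
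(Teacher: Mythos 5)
Your argument is correct, including the delicate point you flag about fixing the Lipschitz net, the approximating polynomials, the degree bound $M$ and the coefficient bound $C$ (all depending only on $\eps$ and $\Omega$) \emph{before} invoking \Cor~\ref{Cor_pin} finitely many times to extract $\delta$ and $n_0$; the identification of each overlap moment with a $(2,m)$-intensive observable is exactly right, and the Kantorovich--Rubinstein/Stone--Weierstrass upgrade works because the overlap simplex $\cP(\Omega\times\Omega)$ is a fixed compact set independent of $n$. However, this is a genuinely different route from the paper's. The paper does not pass through moments at all: it applies \Lem~\ref{Lemma_prod} to get $\Cutm(\mu\tensor\mu,\nu\tensor\nu)<\zeta$, takes an (almost) optimal coupling $\gamma$ of the two product measures, and tests the cut-metric bound with $I=[n]$ and the events $B_\eps(\omega,\omega')=\{\rho_{\SIGMA,\TAU}(\omega,\omega')>\rho_{\SIGMA',\TAU'}(\omega,\omega')+\eps/|\Omega|^2\}$, concluding that under $\gamma$ the two empirical overlaps are close in total variation with probability $1-O(\zeta^{1/2})$; since $\gamma$ induces a coupling of $\cO_\mu$ and $\cO_\nu$, this bounds $\cD_1$ directly. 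The paper's coupling argument is shorter, self-contained (it uses only \Lem~\ref{Lemma_prod}), and yields an explicit polynomial relation between $\delta$ and $\eps$; your moment-plus-compactness argument is softer and gives a non-explicit $\delta$ (Stone--Weierstrass provides no degree control), and it leans on \Cor~\ref{Cor_pin}, whose proof is itself heavier machinery (it already uses \Thm~\ref{Thm_pin}, \Prop~\ref{Cor_symmetry} and \Lem~\ref{Lemma_prod}), but in exchange it isolates a reusable principle: cut-metric proximity controls the expectation of every bounded continuous functional of the overlap, not just the Wasserstein distance. Your parenthetical subsequence/compactness variant is also sound, with the same loss of explicitness.
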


After proving Fact~\ref{Fact_metric} in \Sec~\ref{Sec_metric} we establish \Thm~\ref{Thm_pin} in \Sec~\ref{sec:PinningProof}.
The proof, which is technically not all too difficult, builds upon \Lem~\ref{Lemma_pinning} and a few ideas from  prior work on the decomposition of discrete measures~\cite{Victor,Limits}.
The remaining subsections contain the proofs of 
the remaining results.

\medskip\paragraph{\em Related work}
The cut metric was originally introduced as a metric on matrices and graphs~\cite{FK} and it was used to derive a `weak' version of \Szemeredi's regularity lemma for graphs~\cite{Szemeredi} that, though still useful to approximate various graph parameters of interest, gets by with partitioning the vertex set into a smaller number of classes than required by the original version.
Variants of the cut metric have since come to play an eminent role in the theory of graph limits~\cite{Borgs, Janson,LovSz}.
At least superficially the cut metric may seem reminiscent of the transport metric because both involve a minimization over couplings; but
the cut metric is more involved due to the inner maximization problem which depends on the coupling. 
We are not aware of a prior reference for the cut metric for probability distributions as defined in (\ref{eqCutm}), but the definition is
closely related to the metric used in recent work on a limiting theory for discrete probability distributions~\cite{Limits}.
There the spaces $\cP(\Omega^n)$, $n\geq1$, are all embedded into a `continuous' Polish space that is equipped with a continuous version of the cut metric.
The discrete version (\ref{eqCutm}) and the continuous version are equivalent to the extent that if we were to consider sequences $(\mu_n)_n$ of probability distributions $\mu_n\in\cP(\Omega^n)$, then the cut metric from (\ref{eqCutm}) and the metric from~\cite{Limits} would yield the same Cauchy sequences.
Some of the statements in this section (e.g., \Prop~\ref{Cor_symmetry} and \Lem~\ref{Lemma_prod}) have analogues in the limiting theory from~\cite{Limits}.
The added value of the discrete versions is that we obtain uniformity in $n$ and/or explicit rates such as the expressions given in \Prop~\ref{Cor_symmetry}.
Finally, the cut metric and the limiting theory from~\cite{Limits} is related to the Aldous-Hoover representation, a connection that was already pointed out by Panchenko~\cite{PanchenkoBook}.

The proof of \Lem~\ref{Lemma_pinning} in~\cite{CKPZ} hinges on an information-theoretic inequality that goes back to the work of Montanari~\cite{Andrea}.
Results very similar to \Lem~\ref{Lemma_pinning} were discovered independently in the study of convex optimization problems community~\cite{Raghavendra}.
Although we have not tried to optimize the constant $c$ in \Lem~\ref{Lemma_pinning}, a natural question is what the best dependence on $\eps$ might be, cf.~\cite{Allen}.

\subsection{Proof of Fact~\ref{Fact_metric}}\label{Sec_metric}
Merely the triangle inequality needs verifying.
We use an argument similar to the one from~\cite{Janson}.
Suppose that $\mu,\nu,\eta\in\cP(\Omega^n)$ and let $\gamma\in\Gamma(\mu,\nu),\gamma'\in\Gamma(\nu,\eta)$ be couplings for which the cut distance is attained.
We define a coupling $g\in\Gamma(\mu,\eta)$ by letting
	\begin{align*}
	g(\sigma,\tau)&=\sum_{\chi\in\Omega^n:\nu(\chi)>0}\frac{\gamma(\sigma,\chi)\gamma'(\chi,\tau)}{\nu(\chi)}.
	\end{align*}
Now let $B\subset\Omega^n\times\Omega^n$ and $I\subset[n]$.
Then for any $\omega\in\Omega$,
	\begin{align}\nonumber
	\sum_{i\in I}\sum_{(\sigma,\tau)\in B}g(\sigma,\tau)&(\vecone\{\sigma_i=\omega\}-\vecone\{\tau_i=\omega\})\\&=
	\sum_{(\sigma,\tau)\in B}\sum_{\chi\in\Omega^n:\nu(\chi)>0}\frac{\gamma(\sigma,\chi)\gamma'(\chi,\tau)}{\nu(\chi)}
		\sum_{i\in I}(\vecone\{\sigma_i=\omega\}-\vecone\{\chi_i=\omega\}+\vecone\{\chi_i=\omega\}-\vecone\{\tau_i=\omega\})\nonumber\\
	&=\sum_{\sigma\in\Omega^n}\sum_{\chi\in\Omega^n:\nu(\chi)>0}\sum_{\tau\in\Omega^n:(\sigma,\tau)\in B}\frac{\gamma(\sigma,\chi)\gamma'(\chi,\tau)}{\nu(\chi)}\sum_{i\in I}(\vecone\{\sigma_i=\omega\}-\vecone\{\chi_i=\omega\})\nonumber\\
		&\qquad+\sum_{\tau\in\Omega^n}\sum_{\chi\in\Omega^n:\nu(\chi)>0}\sum_{\sigma\in\Omega^n:(\sigma,\tau)\in B}\frac{\gamma(\sigma,\chi)\gamma'(\chi,\tau)}{\nu(\chi)}\sum_{i\in I}(\vecone\{\chi_i=\omega\}-\vecone\{\tau_i=\omega\}).
			\label{eqFact_metric1}
	\end{align}
Assume without loss that the sum on the l.h.s.\ is positive (otherwise we swap $\mu,\nu$) and let
	\begin{align*}
	B_1&=\textstyle\cbc{(\sigma,\chi)\in\Omega^n\times\Omega^n:
		\sum_{i\in I}\vecone\{\sigma_i=\omega\}-\vecone\{\chi_i=\omega\}>0},&
	B_2&=\textstyle\cbc{(\chi,\tau)\in\Omega^n\times\Omega^n:
		\sum_{i\in I}\vecone\{\chi_i=\omega\}-\vecone\{\tau_i=\omega\}>0}.
	\end{align*}
Then (\ref{eqFact_metric1}) yields
	\begin{align*}
	\sum_{i\in I}\sum_{(\sigma,\tau)\in B}g(\sigma,\tau)&(\vecone\{\sigma_i=\omega\}-\vecone\{\tau_i=\omega\})\\
		&\leq\sum_{i\in I}\sum_{(\sigma,\chi)\in B_1}\gamma(\sigma,\chi)(\vecone\{\sigma_i=\omega\}-\vecone\{\tau_i=\omega\})
			+\sum_{i\in I}\sum_{(\chi,\tau)\in B_2}\gamma(\chi,\tau)(\vecone\{\chi_i=\omega\}-\vecone\{\tau_i=\omega\})\\
			&\leq n(\Cutm(\mu,\nu)+\Cutm(\nu,\eta)).
	\end{align*}
Since this bound holds for all $B,I,\omega$, the triangle inequality follows.

\subsection{Proof of \Thm~\ref{Thm_pin}}\label{sec:PinningProof}

\noindent
We will derive \Thm~\ref{Thm_pin} from \Lem~\ref{Lemma_pinning}.
Let us begin with the following observation.

\begin{lemma}\label{Lemma_cutm}
For any $\eps>0$, $\Omega$ there is 
$n_0>0$ such that for all $n>n_0$ the following is true.
Assume that $\nu\in\cP(\Omega^n)$ is $(\eps/9)^3$-symmetric.
Then $\Cutm(\nu,\bigotimes_{i=1}^n\nu_i)<\eps.$
\end{lemma}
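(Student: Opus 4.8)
The plan is to bound the cut metric $\Cutm(\nu,\bigotimes_{i=1}^n\nu_i)$ by choosing the most naive coupling available, namely the product coupling (or, better, a coupling that pairs a sample $\SIGMA$ from $\nu$ with a coordinatewise-independent resampling), and then to show that under this coupling the adversary's inner maximisation over $I\subset[n]$, $B\subset\Omega^n\times\Omega^n$ and $\omega\in\Omega$ cannot produce a large discrepancy because $\nu$ is $(\eps/9)^3$-symmetric. First I would fix the coupling $\gamma$ on $\Omega^n\times\Omega^n$ in which the first marginal is $\nu$ and, conditionally on the first coordinate $\sigma$, the second coordinate $\tau$ is drawn from $\bigotimes_i\nu_i$ independently of $\sigma$; this is a valid element of $\Gamma(\nu,\bigotimes_i\nu_i)$, so it only gives an upper bound on $\Cutm$.

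Next, for arbitrary $I$, $B$, $\omega$ I would expand the quantity
\[
\frac1n\sum_{i\in I}\sum_{(\sigma,\tau)\in B}\gamma(\sigma,\tau)\bc{\vecone\{\sigma_i=\omega\}-\vecone\{\tau_i=\omega\}}
\]
and split it via the triangle inequality into a ``$\sigma$-part'' and a ``$\tau$-part''. For the $\tau$-part, because $\tau$ is independent of $\sigma$ under $\gamma$ and has product law, the inner sum over $\tau\in\Omega^n$ with $(\sigma,\tau)\in B$ factorises; the key point is that for any fixed $\sigma$ the function $\tau\mapsto \frac1n\sum_{i\in I}\vecone\{\tau_i=\omega\}$ concentrates around its mean $\frac1n\sum_{i\in I}\nu_i(\omega)$ up to an $O(1/\sqrt n)$ fluctuation, so this contribution is $o(1)$. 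The heart of the matter is the $\sigma$-part: I need to show that $\frac1n\sum_{i\in I}\bc{\vecone\{\sigma_i=\omega\}-\nu_i(\omega)}$, averaged over $\sigma$ restricted to any event $B'\subset\Omega^n$, is small. This is exactly where $\eps$-symmetry enters: the second moment $\scal{\bc{\frac1n\sum_{i\in I}(\vecone\{\SIGMA_i=\omega\}-\nu_i(\omega))}^2}\nu$ expands into a sum over pairs $i\neq j$ of covariances $\nu_{i,j}(\omega,\omega)-\nu_i(\omega)\nu_j(\omega)$ plus a diagonal term $O(1/n)$, and the pair sum is controlled by $\frac1{n^2}\sum_{i\neq j}\TV{\nu_{i,j}-\nu_i\tensor\nu_j}<(\eps/9)^3$. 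Hence the $L^2$-norm of $X_\omega:=\frac1n\sum_{i\in I}(\vecone\{\SIGMA_i=\omega\}-\nu_i(\omega))$ is $O((\eps/9)^{3/2})+o(1)$, and then for \emph{any} event $B'$, $\scal{\vecone\{B'\}\,|X_\omega|}\nu \le \|X_\omega\|_2$ by Cauchy--Schwarz, which is what bounds the adversary's choice of $B$.

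Putting these pieces together, the inner maximum over all $I$, $B$, $\omega$ is at most $O((\eps/9)^{3/2})+o(1)$, which for $n$ large enough is below $\eps$; one should track the numerical constants to confirm that the exponent $3$ and the factor $9$ in $(\eps/9)^3$ are exactly what make the bound come out to $\eps$ (this matches the constant bookkeeping already used in \Prop~\ref{Cor_symmetry}(i)). The main obstacle I anticipate is purely the constant-chasing: one has to be careful that the event $B$ is allowed to depend jointly on $(\sigma,\tau)$, so the clean factorisation of the $\tau$-part needs the independence of $\tau$ from $\sigma$ under the chosen coupling to be used correctly (conditioning on $\sigma$ first, then bounding the $\tau$-fluctuation uniformly in $\sigma$), and that the Cauchy--Schwarz step for the $\sigma$-part is applied to the marginal law of $\SIGMA$ under $\nu$ rather than under the coupling. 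None of this is deep, but it is where an incautious argument would slip.
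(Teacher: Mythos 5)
Your proposal is correct and follows essentially the same route as the paper: couple $\nu$ with $\bigotimes_i\nu_i$ by the product coupling and control the adversary's discrepancy through a second-moment bound on $\sum_{i\in I}\vecone\{\SIGMA_i=\omega\}$ coming from $(\eps/9)^3$-symmetry (plus trivial concentration for the product side). The only difference is cosmetic — you bound the restricted first moment directly by Cauchy--Schwarz where the paper splits into good/bad events via Chebyshev — and both yield the same $O((\eps/9)^{3/2})+o(1)$ bound.
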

\begin{proof}
Let $\delta=(\eps/9)^3$.
To upper-bound the cut distance we couple $\nu$ and $\bar\nu=\bigotimes_{i=1}^n\nu_i$ independently;
that is, the coupling $\gamma\in\cP(\Omega^n\times\Omega^n)$ is just the product distribution $\gamma=\nu\tensor\bar\nu$.
Thus,  we need to show that
	\begin{align}\label{eqLemma_cutm1}
	\Cutm(\nu,\bar\nu)&\leq\frac1n\max_{\substack{I\subset\{1,\ldots,n\}\\
		B\subset\Omega^{2n}\\\omega\in\Omega}}\abs{\sum_{i\in I}\sum_{(\sigma,\tau)\in B}\nu(\sigma)\bar\nu(\tau)
			(\vecone\{\sigma_i=\omega\}-\vecone\{\tau_i=\omega\})}<\eps.
	\end{align}
For a set $I\subset[n]$ and $\omega\in\Omega$ consider the random variable $X_{I,\omega}(\SIGMA)=\sum_{i\in I}\vecone\{\SIGMA_i=\omega\}$.
Let $\bar X_{I,\omega}=\scal{X_{I,\omega}(\SIGMA)}{\nu}$. 
Then in order to establish (\ref{eqLemma_cutm1}) it suffices to prove that for any set $I\subset[n]$ and for any $\omega\in\Omega$,
	\begin{align}\label{eqLemma_cutm2}
	\scal{\vecone\cbc{\abs{X_{I,\omega}(\SIGMA)-\bar X_{I,\omega}}>\eps n/3}}{\nu}&<\eps/6&\mbox{and}\\
	\scal{\vecone\cbc{\abs{X_{I,\omega}(\SIGMA)-\bar X_{I,\omega}}>\eps n/3}}{\bar\nu}&<\eps/6.\label{eqLemma_cutm3}
	\end{align}
To prove (\ref{eqLemma_cutm2}) we estimate the second moment of $X_{I,\omega}$.
Due to $\delta$-symmetry we obtain
	\begin{align*}
	\scal{X_{I,\omega}(\SIGMA)^2}\nu&=\sum_{i,j\in I}\nu_{i,j}(\omega,\omega)\leq2\delta n^2+\bar X_{I,\omega}(1+\bar X_{I,\omega}).
	\end{align*}
Thus, Chebyshev's inequality yields
	\begin{align*}
	\scal{\vecone\cbc{\abs{X_{I,\omega}(\SIGMA)-\bar X_{I,\omega}}>\eps n/6}}{\nu}\leq\frac{2\delta n^2+\bar X_{I,\omega}}{(\eps n/6)^2}<\eps/6,
	\end{align*}
whence (\ref{eqLemma_cutm2}) follows.
Similarly, because $\bar\nu$ is a product measure we have $\scal{X_{I,\omega}(\SIGMA)^2}{\bar\nu}\leq\bar X_{I,\omega}(1+\bar X_{I,\omega})$
and thus another application of Chebyshev's inequality yields (\ref{eqLemma_cutm3}).
\end{proof}

\noindent
As a further preparation for the proof of \Thm~\ref{Thm_pin} we need the following extension of the triangle inequality.

\begin{lemma}\label{Lemma_xtriangle}
Suppose that $\mu_1,\nu_1,\ldots,\mu_k,\nu_k\in\cP(\Omega^n)$ and $p=(p_1,\ldots,p_k)\in\cP([k])$.
Let $\mu=\sum_{i=1}^kp_i\mu_i$, $\nu=\sum_{i=1}^kp_i\nu_i$.
Then
	\begin{align*}
	\Cutm(\mu,\nu)&\leq\sum_{i=1}^kp_i\Cutm(\mu_i,\nu_i).
	\end{align*}
\end{lemma}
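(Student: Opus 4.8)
The plan is to glue together optimal couplings for the individual pairs $(\mu_i,\nu_i)$ into a single coupling of $(\mu,\nu)$, and then exploit that the inner cut functional in \eqref{eqCutm} is \emph{linear} in the coupling while the absolute value is convex. This is the same device already used in the proof of Fact~\ref{Fact_metric}, only simpler.

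Concretely, for each $i\in[k]$ fix a coupling $\gamma_i\in\Gamma(\mu_i,\nu_i)$ that attains the minimum in the definition \eqref{eqCutm} of $\Cutm(\mu_i,\nu_i)$, and set $\gamma=\sum_{i=1}^k p_i\gamma_i$. Summing $\gamma_i(\sigma,\tau)$ over $\tau\in\Omega^n$ gives $\mu_i(\sigma)$ and summing over $\sigma\in\Omega^n$ gives $\nu_i(\sigma)$, so by linearity $\gamma$ has first marginal $\sum_i p_i\mu_i=\mu$ and second marginal $\sum_i p_i\nu_i=\nu$; hence $\gamma\in\Gamma(\mu,\nu)$. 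Since the outer operation in \eqref{eqCutm} is a \emph{minimum} over couplings, $\Cutm(\mu,\nu)$ is bounded above by the value of the inner maximization evaluated at this particular (not necessarily optimal) $\gamma$.

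Now fix any $I\subset\{1,\ldots,n\}$, $B\subset\Omega^n\times\Omega^n$ and $\omega\in\Omega$. Because $\sum_{i\in I}\sum_{(\sigma,\tau)\in B}\gamma(\sigma,\tau)(\vecone\{\sigma_i=\omega\}-\vecone\{\tau_i=\omega\})$ is linear in $\gamma$, it equals $\sum_{j=1}^k p_j\sum_{i\in I}\sum_{(\sigma,\tau)\in B}\gamma_j(\sigma,\tau)(\vecone\{\sigma_i=\omega\}-\vecone\{\tau_i=\omega\})$. Applying the triangle inequality for the absolute value and then, for each $j$, bounding the $j$-th inner sum by $n\,\Cutm(\mu_j,\nu_j)$ (it is one of the terms over which the maximum in \eqref{eqCutm} is taken, and $\gamma_j$ is the optimal coupling for $(\mu_j,\nu_j)$), we get $\bigl|\sum_{i\in I}\sum_{(\sigma,\tau)\in B}\gamma(\sigma,\tau)(\vecone\{\sigma_i=\omega\}-\vecone\{\tau_i=\omega\})\bigr|\le n\sum_{j=1}^k p_j\,\Cutm(\mu_j,\nu_j)$. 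Dividing by $n$ and taking the maximum over $I,B,\omega$ on the left-hand side yields $\Cutm(\mu,\nu)\le\sum_{j=1}^k p_j\,\Cutm(\mu_j,\nu_j)$, as desired.

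There is essentially no obstacle: the only point requiring care is the order of the $\min$ over couplings and the $\max$ over $(I,B,\omega)$ in \eqref{eqCutm}. Since the minimization sits outside, it suffices to produce \emph{one} admissible coupling of $(\mu,\nu)$ for which the inner maximum is small, and the averaged coupling $\gamma=\sum_i p_i\gamma_i$ does the job; we never need $\gamma$ to be optimal for $(\mu,\nu)$ itself.
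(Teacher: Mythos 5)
Your proof is correct and matches the paper's argument exactly: both mix the optimal couplings as $\gamma=\sum_i p_i\gamma_i$, verify its marginals, and use linearity plus the triangle inequality to bound the inner maximum by $n\sum_i p_i\Cutm(\mu_i,\nu_i)$. No differences worth noting.
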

\begin{proof}
Let $\gamma_i$ be a coupling of $\mu_i,\nu_i$ for which the cut distance is attained ($i=1,\ldots,k$).
Then we obtain a coupling $\gamma$ of $\mu,\nu$ by letting $\gamma(\sigma,\tau)=\sum_{i=1}^kp_i\gamma_i(\sigma,\tau)$.
With respect to this coupling we obtain for any $B\subset\Omega^n\times\Omega^n$, $I\subset[n]$, $\omega\in\Omega$ the bound
	\begin{align*}
	\frac1n\abs{\sum_{(\sigma,\tau)\in B}\gamma(\sigma,\tau)\sum_{i\in I}\vecone\{\sigma_i=\omega\}-\vecone\{\tau_i=\omega\}}
		&\leq\frac1n\sum_{i=1}^kp_i\abs{\sum_{(\sigma,\tau)\in B}\gamma_i(\sigma,\tau)\sum_{i\in I}\vecone\{\sigma_i=\omega\}-\vecone\{\tau_i=\omega\}}
			\leq\sum_{i=1}^kp_i\Cutm(\mu_i,\nu_i).
	\end{align*}
Since this bound holds for all $B,I,\omega$, the assertion follows.
\end{proof}

\begin{proof}[Proof of \Thm~\ref{Thm_pin}]
We may assume that $\eps<\eps_0$ is sufficiently small and pick $\delta=\delta(\eps,\Omega)=(\eps/(2\ln|\Omega|))^{c'}$ for a large enough constant $c'>0$.
By \Lem~\ref{Lemma_pinning} there exists a bounded random variable $\THETA\leq((2\ln|\Omega|)/\eps)^{c}$ such that 
	\begin{equation}\label{eqThm_pin1}
	\pr\brk{\mu^{\vI,\SIGMA}\mbox{ is $\delta$-symmetric}}>1-\delta.
	\end{equation}
By \Lem~\ref{Lemma_cutm} we can choose $c'>0$ so large that (\ref{eqThm_pin1}) implies
	$\Erw\brk{\Cutm(\mu^{\vI,\SIGMA},\bar\mu^{\vI,\SIGMA})}<\eps^5,$
whence the first part of \Thm~\ref{Thm_pin} is immediate.
Hence,
	\begin{align}\label{eqThm_pin2}
	\pr\brk{\Erw\brk{\Cutm(\mu^{\vI,\SIGMA},\bar\mu^{\vI,\SIGMA})\big|\vI}<\eps^2}\geq1-\eps^2.
	\end{align}
Furthermore,  for any set $\vI$  we can write $\mu$ as a convex combination
	$\mu=\sum_{\sigma\in\Omega^{\vI}}\mu_{\vI}(\sigma)\mu^{\vI,\sigma}.$ 
Therefore, (\ref{eqThm_pin2}) and \Lem~\ref{Lemma_xtriangle} yield
	\begin{align*}
	\Erw\brk{\Cutm(\mu(\sigma),\bar\mu^{\vI}(\sigma))}&\leq
			\Erw\brk{\Erw\brk{\Cutm\bc{\mu^{\vI,\SIGMA}(\sigma),\Erw\brk{\bar\mu^{\vI,\SIGMA}(\sigma)|\vI}}}}
		\leq2\eps^2,
	\end{align*}
as desired.
\end{proof}

\subsection{Proof of \Prop~\ref{Cor_symmetry}}
The first assertion follows from \Lem~\ref{Lemma_cutm} immediately.
We prove the second assertion by induction on $k$.
For $k=1$ there is nothing to show.
Thus, suppose $k>1$ and let $\bar\mu=\bigotimes_{i=1}^n\mu_i$.
Let $\delta=\eps^4/(128|\Omega|)^{4k}$ and assume that $\Cutm(\mu,\bar\mu)<\delta$.
For spins $\omega_1,\ldots,\omega_{k-1}\in\Omega$ and pairwise distinct $i_1,\ldots,i_{k-1}\in[n]^k$ let
	$$S_{i_1,\ldots,i_{k-1}}\bc{\omega_1,\ldots,\omega_{k-1}}=\{\sigma\in\Omega^n:\sigma_{i_1}=\omega_1,\ldots,\sigma_{i_{k-1}}=\omega_{k-1}\}.$$
Further, let
	\begin{align}\label{eqCor_symmetry_1}
	J(\omega_1,\ldots,\omega_{k-1})&=\cbc{
		(i_1,\ldots,i_{k-1}):\mu\bc{S_{i_1,\ldots,i_{k-1}}\bc{\omega_1,\ldots,\omega_{k-1}}}>\eps/(4|\Omega|^{k-1})}.
	\end{align}
%Further, for $(i_1,\ldots,i_{k-1})\in J(\omega_1,\ldots,\omega_{k-1})$ let 
Additionally, for a further spin $\omega_k\in\Omega$ let %be the set of all $i\in[n]\setminus\{i_1,\ldots,i_k\}$ such that
	\begin{align}\label{eqCor_symmetry_2}
	I_{i_1,\ldots,i_{k-1}}\bc{\omega_1,\ldots,\omega_k}&=\cbc{i_k\in[n]\setminus\{i_1,\ldots,i_{k-1}\}:
	\mu_{i_k}(\omega_k|S_{i_1,\ldots,i_{k-1}}(\omega_1,\ldots,\omega_{k-1}))\leq\mu_i(\omega_k)-\eps/(4|\Omega|)}.
	\end{align}	
We claim that
	\begin{align}\label{eqCor_symmetry1}
	|I_{i_1,\ldots,i_{k-1}}\bc{\omega_1,\ldots,\omega_k}|&\leq\eps n/(8|\Omega|^k)
	&\mbox{for all }i_1,\ldots,i_{k-1}\in J(\omega_1,\ldots,\omega_{k-1}).
	\end{align}

To establish (\ref{eqCor_symmetry1}) let $S=S_{i_1,\ldots,i_{k-1}}\bc{\omega_1,\ldots,\omega_{k-1}}$, $I=I_{i_1,\ldots,i_{k-1}}\bc{\omega_1,\ldots,\omega_k}$ for brevity and set
	$$T=S\cap\cbc{\sigma\in\Omega^n:\sum_{i\in I}\vecone\{\sigma_i=\omega_k\}\geq\sum_{i\in I}(\mu_i(\omega_k)-\eps/(8|\Omega|))}.$$
Then by Markov's inequality and (\ref{eqCor_symmetry_2}),
	\begin{align}\label{eqCor_symmetry2}
	\mu(T|S)&\leq\frac{1-\eps/(4|\Omega|)}{1-\eps/(8|\Omega|)}\leq
		1-\frac{\eps}{16|\Omega|}.
	\end{align}
On the other hand, if (\ref{eqCor_symmetry1}) is violated then Chebyshev's inequality applied to the product measure $\bar\mu$ shows that for large enough $n$
the event
	$$T'=\cbc{\sigma\in\Omega^n:\sum_{i\in I}\vecone\{\sigma_i=\omega_k\}\geq\sum_{i\in I}(\mu_i[\omega_k]-\eps/(128|\Omega|))}$$
satisfies
	\begin{align}\label{eqCor_symmetry3}
	\bar\mu(T')&\geq1-o(1).
	\end{align}
But (\ref{eqCor_symmetry2}) and (\ref{eqCor_symmetry3}) contradict the assumption that $\Cutm(\mu,\bar\mu)<\delta$
	(by plugging in the event $B=(S\setminus T)\times T'$ and the set $I$).
Thus, we have established (\ref{eqCor_symmetry1}).

Now, by the induction hypothesis we have
\begin{align}\nonumber
\sum_{i_1<\cdots<i_k}\TV{\mu_{i_1,\ldots,i_k}-\bigotimes_{j=1}^k\mu_{i_j}}
		&\leq\sum_{i_1<\cdots<i_k}\TV{\mu_{i_1,\ldots,i_k}-\mu_{i_1,\ldots,i_{k-1}}\tensor\mu_{i_k}}+
			\TV{\mu_{i_1,\ldots,i_{k-1}}\tensor\mu_{i_k}-\bigotimes_{j=1}^k\mu_{i_j}}\\
		&\leq\frac{\eps n^k}4+\sum_{i_1<\cdots<i_k}\TV{\mu_{i_1,\ldots,i_k}-\mu_{i_1,\ldots,i_{k-1}}\tensor\mu_{i_k}}.
			\label{eqCor_symmetry_3}
\end{align}
Further, due to (\ref{eqCor_symmetry_1}), (\ref{eqCor_symmetry_2}) and (\ref{eqCor_symmetry1}),
	\begin{align*}
	\sum_{i_1<\cdots<i_k}&\TV{\mu_{i_1,\ldots,i_k}-\mu_{i_1,\ldots,i_{k-1}}\tensor\mu_{i_k}}=\frac12\sum_{\omega_1,\ldots,\omega_k\in\Omega}
		\sum_{i_1<\cdots<i_k}\abs{\mu_{i_1,\ldots,i_k}(\omega_1,\ldots,\omega_k)-\mu_{i_1,\ldots,i_{k-1}}(\omega_1,\ldots,\omega_{k-1})\tensor\mu_{i_k}(\omega_k)}\\
		&\leq\frac{\eps n^k}4+\sum_{\omega_1,\ldots,\omega_k\in\Omega}\sum_{\substack{(i_1,\ldots,i_{k-1})\\\in J(\omega_1,\ldots,\omega_{k-1})}}
			\sum_{i_k>i_{k-1}}\max\{0,\mu_{i_1,\ldots,i_{k-1}}(\omega_1,\ldots,\omega_{k-1})\mu_{i_k}(\omega_k)
				-\mu_{i_1,\ldots,i_k}(\omega_1,\ldots,\omega_k)\}\\
		&\leq\frac{\eps n^k}2+\sum_{\omega_1,\ldots,\omega_k\in\Omega}\sum_{\substack{(i_1,\ldots,i_{k-1})\\\in J(\omega_1,\ldots,\omega_{k-1})}}
			|I_{i_1,\ldots,i_{k-1}}\bc{\omega_1,\ldots,\omega_k}|\leq\frac{5\eps n^k}8.
	\end{align*}
Thus, the assertion follows from (\ref{eqCor_symmetry_3}).

\subsection{Proof of \Prop~\ref{Cor_states}}
\Lem~\ref{Lemma_cutm} implies that
	\begin{align}\label{eqCor_states1}
	\Cutm\bc{\mu[\nix|S_i],\bigotimes_{j=1}^n\mu[\nix|S_i]}&<\eps\qquad\mbox{for }i=1,\ldots,l.
	\end{align}
Moreover, we have the trivial bound $\Cutm(\mu[\nix|S_0],\bigotimes_{j=1}^n\mu[\nix|S_0])\leq1$.
Since $\mu(S_0)<\eps$, the assertion follows from (\ref{eqCor_states1}) and \Lem~\ref{Lemma_xtriangle}.

\subsection{Proof of \Cor~\ref{Cor_pin}}
We begin with the following statement regarding product measures.

\begin{lemma}\label{Lemma_prod}
For any $\Omega$, $\eps>0$, $k\geq2$ there exist $\delta>0$ and $n_0>0$ such that for all $n>n_0$ and all $\mu,\mu',\nu,\nu'\in\cP(\Omega^n)$ the following is true.
If $\Cutm(\mu,\mu')+\Cutm(\nu,\nu')<\delta$, then $\Cutm(\mu\tensor\nu,\mu'\tensor\nu')<\eps$.
\end{lemma}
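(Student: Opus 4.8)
The plan is to exploit the fact that the product of two configurations on $\Omega^n$ can be viewed as a single configuration on $(\Omega\times\Omega)^n$, and that a tensor of couplings is a coupling of the tensors. Concretely, let $\gamma$ be a coupling of $\mu,\mu'$ and $\gamma'$ a coupling of $\nu,\nu'$ for which $\Cutm(\mu,\mu')$ and $\Cutm(\nu,\nu')$ are attained. Then $\gamma\tensor\gamma'$, read as a measure on $(\Omega\times\Omega)^n\times(\Omega\times\Omega)^n$ or equivalently on $\Omega^{2n}\times\Omega^{2n}$ after regrouping coordinates, is a coupling of $\mu\tensor\nu$ and $\mu'\tensor\nu'$. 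The first step is therefore to fix an adversarial choice of $I\subset[n]$, $B\subset(\Omega^n\times\Omega^n)\times(\Omega^n\times\Omega^n)$ and a target spin $(\omega,\omega')\in\Omega\times\Omega$ for the product measure, and to bound the resulting discrepancy under $\gamma\tensor\gamma'$.

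The key step is a telescoping over the two halves: write $\vecone\{(\sigma_i,\tilde\sigma_i)=(\omega,\omega')\}-\vecone\{(\tau_i,\tilde\tau_i)=(\omega,\omega')\}$ as $\vecone\{\sigma_i=\omega\}\vecone\{\tilde\sigma_i=\omega'\}-\vecone\{\tau_i=\omega\}\vecone\{\tilde\tau_i=\omega'\}$ and insert the cross term $\vecone\{\sigma_i=\omega\}\vecone\{\tilde\tau_i=\omega'\}$, splitting the sum into a piece that differs only in the first coordinate (weighted by the $\{0,1\}$-factor $\vecone\{\tilde\tau_i=\omega'\}$) and a piece that differs only in the second (weighted by $\vecone\{\sigma_i=\omega\}$). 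Because these weights are indicators, each piece can be absorbed into the adversary's freedom to choose the event $B$ — just as in the triangle-inequality proof of Fact~\ref{Fact_metric}. After marginalizing out the coordinates that do not appear, the first piece is bounded by $n\,\Cutm(\mu,\mu')$ and the second by $n\,\Cutm(\nu,\nu')$, using optimality of $\gamma$ and $\gamma'$ together with the fact that the $2n$-coordinate cut metric restricted to a given half is no larger than the $n$-coordinate cut metric of the marginals. Hence $\Cutm(\mu\tensor\nu,\mu'\tensor\nu')\le\Cutm(\mu,\mu')+\Cutm(\nu,\nu')$, and taking $\delta=\eps$ finishes the lemma (in fact with room to spare, so the $n_0$ and the strict inequality are harmless).

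The main obstacle is bookkeeping rather than mathematics: one must be careful that the index set $I\subset[n]$ and the event $B$ chosen by the adversary for the product live on the doubled alphabet, and that after the telescoping step the ``leftover'' indicator weights ($\vecone\{\tilde\tau_i=\omega'\}$ and $\vecone\{\sigma_i=\omega\}$) genuinely depend only on coordinates that are already summed over when one projects to a single half, so that the projected events still form admissible events for the definition (\ref{eqCutm}) of $\Cutm$ on $\Omega^n$. This is exactly the manoeuvre used in \Sec~\ref{Sec_metric}, so I would present the argument by direct analogy with the proof of Fact~\ref{Fact_metric}, treating the two halves symmetrically; no new idea beyond that decomposition is needed.
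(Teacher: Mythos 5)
Your proposal is correct and is essentially the paper's argument: both prove $\Cutm(\mu\tensor\nu,\mu'\tensor\nu')\le\Cutm(\mu,\mu')+\Cutm(\nu,\nu')$ by changing one factor at a time and absorbing the leftover indicator (e.g.\ $\vecone\{\tilde\tau_i=\omega'\}$) into the adversary's freedom to restrict $I$ and modify $B$, exactly as in the proof of Fact~\ref{Fact_metric}. The only cosmetic difference is that the paper first reduces, via the triangle inequality, to the case $\nu=\nu'$ and couples the common factor diagonally, whereas you perform the same hybrid insertion in a single step using the product coupling of the two optimal couplings.
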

\begin{proof}
Due to the triangle inequality it suffices to prove the assertion under the additional assumption that $\nu=\nu'$.

Let $\gamma$ be a coupling of $\mu,\mu'$ for which the cut distance is attained.
We define a coupling $g$ of $\mu\tensor{}\nu,\mu'\tensor{}\nu$ by letting $g(\sigma,\tau,\sigma',\tau)=\gamma(\sigma,\sigma')\nu(\tau)$.
Then for any $I\subset[n]$, $B\subset \Omega^{4n},\omega,\omega'\in\Omega$ we have
\begin{align*}
&\abs{\sum_{x\in I}\sum_{(\sigma,\tau,\sigma',\tau)\in B}g(\sigma,\tau,\sigma',\tau)\vecone\cbc{\tau_x=\omega'}\bc{\vecone\cbc{\sigma_x=\omega}-\vecone\cbc{\sigma_x'=\omega}}}\\
&\qquad\leq\sum_{\tau\in\Omega^n}\nu(\tau)\abs{\sum_{x\in I:\tau_x=\omega'}
		\sum_{\sigma,\sigma'\in\Omega^n:(\sigma,\tau,\sigma',\tau)\in B}\gamma(\sigma,\sigma')\bc{\vecone\cbc{\sigma_x=\omega}-\vecone\cbc{\sigma_x'=\omega}}}\leq \Cutm(\mu,\mu').
\end{align*}
Since this bound holds for all $I,B,\omega,\omega'$, we conclude that $\Cutm(\mu\tensor{}\nu,\mu'\tensor{}\nu)\leq\Cutm(\mu,\mu')$.
\end{proof}

\begin{proof}[Proof of \Cor~\ref{Cor_pin}]
Due to \Lem~\ref{Lemma_prod} it suffices to deal with the case $k=1$.
Given $\eps>0$ we choose small enough $\xi=\xi(\eps)>0$, $\zeta=\zeta(\xi)>0$ and $\delta=\delta(\zeta)>0$ and assume that $\Cutm(\mu,\nu)<\delta$.
Let $\gamma$ be a coupling of $\mu,\nu$ for which the cut distance is attained.
Invoking \Thm~\ref{Thm_pin} and \Prop~\ref{Cor_symmetry}, we obtain partitions $S_0,\ldots,S_k$, $T_0,\ldots,T_l$ of $\Omega^n$ such that
$\mu(S_0),\nu(T_0)<\xi$, $\mu(S_i),\nu(T_i)>\zeta$ for all $i\geq1$ and such that the distributions $\mu[\nix|S_i]$ and $\nu[\nix|T_i]$ are $\xi$-symmetric for all $i\geq1$.
Let $v_i=\mu(S_i)$ for $i=0,\ldots,k$ and $w_i=\nu'(T_i)$ for $i=0,\ldots,l$.
The coupling $\gamma$ induces a coupling $g$ of the two distributions $(v_i)_i$ and $(w_i)_i$.

We claim that
	\begin{align}\label{eqCor_pin1}
	\frac1n\sum_{i\in[k],j\in[l]}\sum_{h=1}^ng(i,j)\TV{\mu_h[\nix|S_i]-\nu_h[\nix|T_j]}&<\xi^{1/9}.
	\end{align}
To see this, define for $i\in[k],\ j\in[l],\ \omega\in\Omega$,
\begin{align*}
I_{i,j}(\omega)&=\cbc{x\in[n]:\mu_x(\omega|S_i)>\nu_x(\omega|T_j)+\xi^{1/8}},\qquad
X_{\omega,i,j}(\sigma)=\sum_{x\in I_{i,j}(\omega)}\vecone\cbc{\sigma_x=\omega}\qquad(\sigma\in\Omega^n),\\
\cE_{i,j}(\omega)&=\cbc{(\sigma,\tau)\in S_i\times T_j:
		\abs{X_{\omega,i,j}(\sigma)-\scal{X_{\omega,i,j}}{\mu(\nix|S_i)}}<\xi^{1/4}n,\ 
		\abs{X_{\omega,i,j}(\tau)-\scal{X_{\omega,i,j}}{\nu(\nix|T_j)}}<\xi^{1/4}n},\\
\cF_{i,j}(\omega)&=\cbc{(\sigma,\tau)\in S_i\times T_j:\abs{X_{\omega,i,j}(\sigma)-X_{\omega,i,j}(\tau)}<\xi n}.
\end{align*}
Due to $\xi$-symmetry we have for all $i,j\geq1$,
\begin{align*}
\scal{X_{\omega,i,j}^2}{\mu(\nix|S_i)}-\scal{X_{\omega,i,j}}{\mu(\nix|S_i)}^2&\leq 2\xi n^2,&
\scal{X_{\omega,i,j}^2}{\nu(\nix|T_j)}-\scal{X_{\omega,i,j}}{\nu(\nix|T_i)}^2&\leq 2\xi n^2.
\end{align*}
Hence, Chebyshev's inequality implies that for all $\omega\in\Omega$,
\begin{align}\label{eqCor_pin2_a}
\sum_{i\in[k],j\in[l]}\gamma(\cE_{i,j}(\omega))\geq1-\xi^{1/5}.
\end{align}
Further, assuming that $\Cutm(\mu,\nu)<\delta$ for a small enough $\delta>0$, we see that
\begin{align}\label{eqCor_pin2_b}
\sum_{i\in[k],j\in[l]}\gamma(\cF_{i,j}(\omega))\geq1-\xi.
\end{align}
Combining \eqref{eqCor_pin2_a}--\eqref{eqCor_pin2_b}, we obtain
\begin{align}\label{eqCor_pin2_c}
\sum_{i\in[k],j\in[l]}\gamma(\cE_{i,j}(\omega)\cap\cF_{i,j}(\omega))\ge1-2\xi^{1/5}\qquad\mbox{ for all }\omega\in\Omega.
\end{align}
Now, if $\gamma(\cE_{i,j}(\omega)\cap\cF_{i,j}(\omega))>0$, then  by the triangle inequality,
\begin{align*}
\xi^{1/8}|I_{i,j}(\omega)|\leq\sum_{x\in I_{i,j}(\omega)}\mu_x(\omega|S_i)-\nu_x(\omega|T_j)\leq
\abs{\scal{X_{\omega,i,j}}{\mu(\nix|S_i)}-\scal{X_{\omega,i,j}}{\nu(\nix|T_j)}}\leq3\xi^{1/4}n,
\end{align*}
and thus $|I_{i,j}(\omega)|\leq3\xi^{1/8}n$.
Consequently, \eqref{eqCor_pin2_c} implies that $\sum_{i\in[k],j\in[l]}g(i,j)|I_{i,j}(\omega)|\leq4\xi^{1/8}n$ for all $\omega\in\Omega$,
whence we obtain \eqref{eqCor_pin1}.

To complete the proof we may assume without loss that $f(\sigma)=\sum_{i_1\in I_1,\ldots,i_\ell\in I_\ell}\vecone\{\sigma_{i_1}=\tau_1,\ldots,\sigma_{i_\ell}=\tau_\ell\}.$
Then
	\begin{align}
	\abs{\scal{f(\SIGMA)}\mu-\scal{f(\SIGMA)}\nu}
		&\leq4\xi+\sum_{i\in[k],j\in[l]}g(i,j)\abs{\scal{f(\SIGMA)}{\mu[\nix|S_i]}-\scal{f(\SIGMA)}{\nu[\nix|T_j]}}.
			\label{eqCor_pin2}
	\end{align}
Furthermore, 
since the conditional distributions $\mu(\nix|S_i)$ and $\nu(\nix|T_j)$ are $\xi$-symmetric for all $i\in[k]$, $j\in[l]$, \Prop~\ref{Cor_symmetry} yields
	\begin{align}			\label{eqCor_pin3}
	\abs{\scal{f(\SIGMA)}{\mu[\nix|S_i]}-\scal{f(\SIGMA)}{\nu[\nix|T_j]}}&\leq\frac{\eps n^l}2+\sum_{i_1\in I_1,\ldots,i_l\in I_l}
		\abs{\prod_{h=1}^l\mu_{i_h}[\tau_h|S_i]-\prod_{h=1}^l\nu_{i_h}[\tau_h|T_j]}&&(i\in[k],j\in[l]).
	\end{align}
Finally, the assertion follows from (\ref{eqCor_pin1}), (\ref{eqCor_pin2}) and (\ref{eqCor_pin3}). 
\end{proof}

\subsection{Proof of \Cor~\ref{Cor_overlap}}
Choose small enough $\zeta=\zeta(\xi)>0$ and $\delta=\delta(\zeta)>0$ and assume that $\Cutm(\mu,\nu)<\delta$.
Then \Lem~\ref{Lemma_prod} shows that $\Cutm(\mu\tensor{}\mu,\nu\tensor{}\nu)<\zeta$.
Hence, there exists a coupling $\gamma\in\cP(\Omega^{4n})$ of $\mu\tensor{}\mu$ and $\nu\tensor{}\nu$ such that for all $B\subset\Omega^{4n}$ and all $\omega,\omega'\in\Omega$ we have
\begin{align*}\nonumber
\zeta&>\frac1n\abs{\sum_{x=1}^n\sum_{(\sigma,\tau,\sigma',\tau')\in B}\gamma(\sigma,\tau,\sigma',\tau')
			\bc{\vecone\cbc{\sigma_x=\omega,\tau_x=\omega'}-\vecone\cbc{\sigma_x'=\omega,\tau_x'=\omega'}}}\\
&=\abs{\sum_{(\sigma,\tau,\sigma',\tau')\in B}\gamma(\sigma,\tau,\sigma',\tau')\bc{\rho_{\sigma,\tau}(\omega,\omega')-\rho_{\sigma',\tau'}(\omega,\omega')}}.
\end{align*}
Consequently, the events
\begin{align*}
B_\eps(\omega,\omega')&=\cbc{(\sigma,\tau,\sigma',\tau')\in\Omega^{4n}:
			\rho_{\sigma,\tau}(\omega,\omega')>\rho_{\sigma',\tau'}(\omega,\omega')+\eps/(|\Omega|^2)}
\end{align*}
have probability $\gamma(B_\eps(\omega,\omega'))<\zeta^{1/2}$.
In effect, with $(\SIGMA,\TAU,\SIGMA',\TAU')$ chosen from $\gamma$ we have
\begin{align*}
\Erw\TV{\rho_{\SIGMA,\TAU}-\rho_{\SIGMA',\TAU'}}&=\frac{1}{2}\sum_{\omega,\omega'\in\Omega}\Erw\abs{\rho_{\SIGMA,\TAU}(\omega,\omega')-\rho_{\SIGMA',\TAU'}(\omega,\omega')}\leq\frac\eps2+\sum_{\omega,\omega'\in\Omega}\gamma(B_\eps(\omega,\omega'))<\eps,
\end{align*}
as desired.

\section{Bethe states}\label{Sec_BetheStates}

\noindent
In this section we establish the main result of the paper (\Thm~\ref{Thm_BetheStates} below) on the Bethe state decomposition for a broad class of models.
\Thm~\ref{thmPotts} regarding the Potts model is an immediate application of this more general result.

\subsection{Factor graph models}\label{Sec_factorGraphs}
Factor graphs provide a suitably general framework to discuss the Bethe state decomposition.
Let $\Omega\neq\emptyset$ be a finite set of spins and let $k\geq2$ be an integer.
An {\em $\Omega$-factor graph} $G=(V,F,(\partial a)_{a\in F},(\psi_a)_{a\in F})$ consists of a set $V$ of {\em variable nodes}, a set $F$ of {\em constraint nodes} and a family $\partial a=(\partial_1 a,\ldots,\partial_{k} a)\in V^{k}$ of {\em neighbors} as well as a {\em weight function} $\psi_a:\Omega^{k}\to(0,\infty)$ for each $a\in F$. 
The {\em total weight} of a configuration $\sigma\in\Omega^V$ is defined as
	\begin{align}\label{eqweight}
	\psi_G(\sigma)&=\prod_{a\in F}\psi_a(\sigma(\partial_1 a),\ldots,\sigma(\partial_k a)).
	\end{align}
These weights naturally give rise to the {\em Gibbs distribution} of $G$, defined by
	\begin{align}\label{eqGibbs}
	\mu_G(\sigma)&=\psi_G(\sigma)/Z(G),\qquad\mbox{where}\qquad Z(G)=\sum_{\tau\in\Omega^V}\psi_G(\tau).
	\end{align}
For a subset $U\subset V$ we denote by $\mu_{G,U}$ the joint marginal distribution of the variables in $U$.

As a generalization of models such as the Potts model on the \Erdos-\Renyi\ graph we investigate the following class of random factor graphs.
Let $\Psi$ be a finite set of weight functions $\Omega^k\to(0,\infty)$ and let $P$ be a probability distribution on $\Psi$.
Then for integers $n,m$ we let $\G(n,m,P)$ be the random factor with variable nodes $V=V_n=\{x_1,\ldots,x_n\}$ and constraint nodes $F=F_m=\{a_1,\ldots,a_m\}$ where, independently for each $i=1,\ldots,m$, $\partial a_i\in V^k$ is chosen uniformly and independently of $\psi_{a_i}\in\Psi$, which is chosen from $P$.
Additionally, for $d>0$ we let $\vm=\vm_{d,n}$ be a random variable with distribution $\Po(dn/k)$ and we introduce the shorthand $\G=\G(n,\vm,P)$.
The random factor graph $\G$ has a property $\cE$ {\em asymptotically almost surely} (`\whp') if $\lim_{n\to\infty}\pr\brk{\G\in\cE}=1$.
This model, and special cases thereof, has received a great deal of attention over the past few years (e.g.,~\cite{Victor,CKPZ,CDGS,DemboBrazil,demboPotts,MM}).

\begin{example}
The Potts antiferromagnet on the \Erdos-\Renyi\ random graph can be expressed easily as a random factor graph model on the spin set $\Omega=\{1,\ldots,q\}$, $k=2$ and the single weight function $\psi_\beta(\sigma_1,\sigma_2)=\exp(-\beta\vecone\{\sigma_1=\sigma_2\})$.
Thus, $\Psi=\{\psi_\beta\}$ and $P$ is the atom on $\psi_\beta$, of course.
Standard arguments show that this model and the Potts model on the \Erdos-\Renyi\ graph are mutually contiguous.
\end{example}

\begin{example} 
The random $k$-SAT model is another example~\cite{PanchenkoSAT,TalagrandSAT}. 
Here $\Omega = \{ \pm 1\}$ and  $\Psi$ consists of $2^k$ different constraint functions indexed by $\xi \in \{\pm1\}^k$ with 
	\begin{align*}
	\psi_{\xi}(\sigma) &= 1-\frac{1-\eul^{-\beta}}{2^k} \prod_{i=1}^k\bc{1+\xi_i\sigma_i}.
	\end{align*}
The distribution $P$ is uniform on $\Psi$.   Each constraint node with constraint function $\psi_{\xi}$ corresponds to a Boolean $k$-clause with literal signs given by $\xi$.  The Gibbs measure $\mu_{\G}$ on Boolean assignments $\sigma \in \{\pm 1\}^n$ assigns probability proportional to $\exp(-\beta H(\G,\sigma ))$ where $H(\G,\sigma)$ is the number of unsatisfied clauses under the assignment $\sigma$. 
\end{example}

\subsection{Bethe states}\label{Sec_introBethe}
To state our main result we need to formally introduce Bethe states, generalizing the notion presented in Theorem~\ref{thmPotts}.
A factor graph $G$ induces a bipartite graph on the sets $V,F$ of variable and constraint nodes where we connect each $a\in F$ with the variable nodes $\partial_1a,\ldots,\partial_ka$.
This allows us to use graph-theoretic concepts such as the distance between two nodes (viz.\ the length of a shortest path).
In particular, we say that $a\in F$ and $x\in V$ are {\em adjacent} if these two nodes are connected by an edge in the bipartite graph. We write $\partial x$ for the set of neighbors of a variable node $x$.
Moreover, while $\partial a$ really is an ordered $k$-tuple of for each constraint node $a$, use the same symbol to denote the set $\{\partial_1a,\ldots,\partial_ka\}$ of neighbors of $a$.

Further, a subset $\emptyset\neq U \subset V$ is a \textit{cavity} of $G$ if
\begin{description}
\item[CAV1] the induced subgraph $G[U]$, consisting of variable nodes $U$ and all constraint nodes $a \in F$ so that $\partial a \subseteq U$, is acyclic, and
\item[CAV2] for any $a \in F$ so that $\partial a \not \subseteq U$, $|\partial a \cap U|\le 1$, and
\item[CAV3] for any $a,b \in F$, $a \ne b$, so that $\partial a \cap U \ne \emptyset$ and  $\partial b \cap U \ne \emptyset$, $\partial a \cap \partial b \setminus U = \emptyset$. 
\end{description}
See Figure~\ref{figCavity} for an illustration.
Let $\partial U$ be the set of constraint nodes $a$ so that $\partial a \cap U \ne \emptyset$ and $\partial a \setminus U \ne \emptyset$.
For $a \in \partial U$, let $x_{a\to U}$ denote the unique variable node $x\in U \cap \partial a$.
Finally, let $\mathcal C(G,l,r)$ be the set of all cavities $U$ of size $|U | = l$ such that $G[U]$ has precisely $r$ connected components.

\begin{figure}
\centering
\includegraphics[scale=0.35]{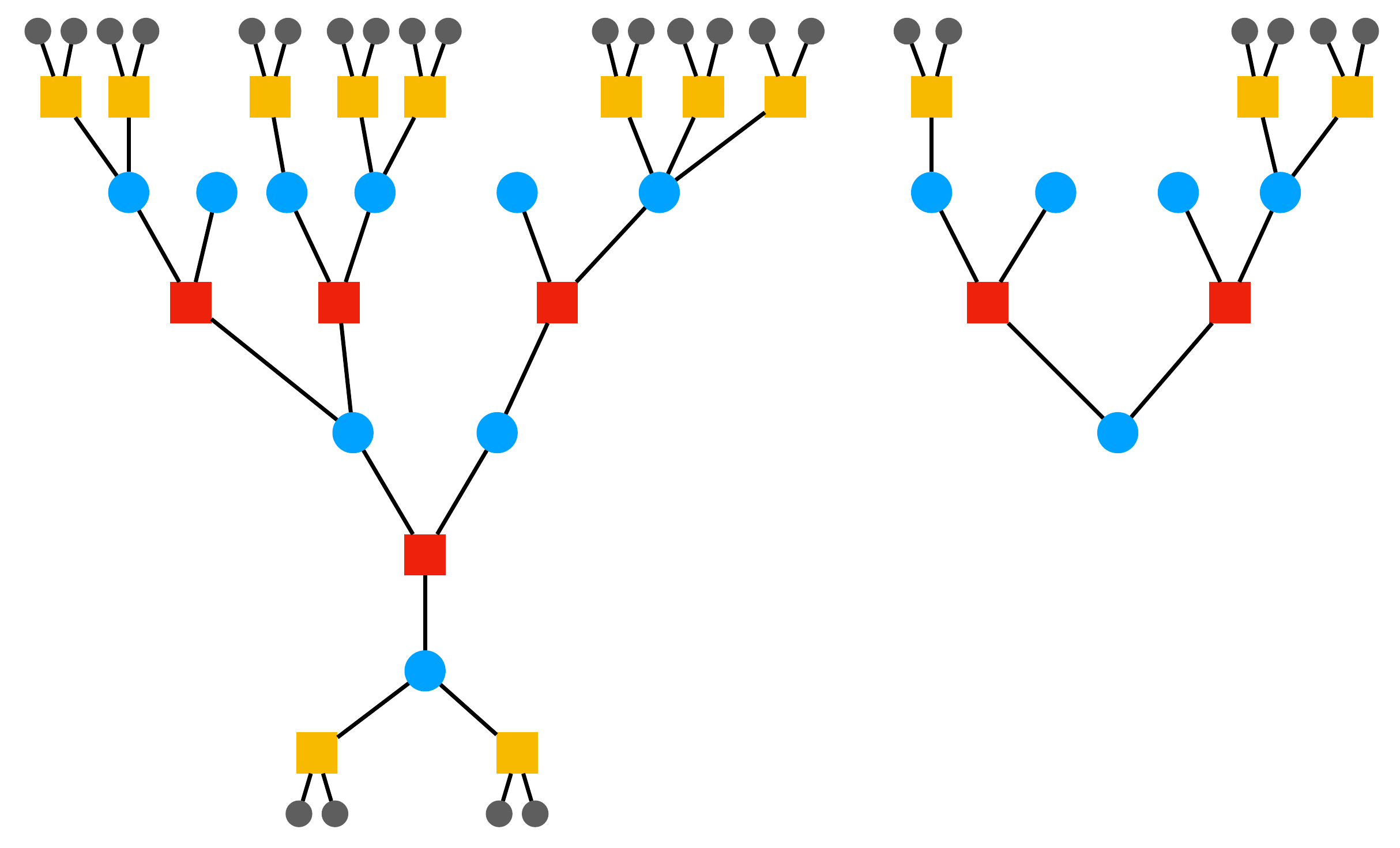}
\caption{A cavity $U$ of size $14$ with two connected components. The variable nodes of the cavity are in blue.  The induced graph $G[U]$ is given by the blue variable nodes and red constraint nodes.  The boundary constraint nodes, $\partial U$, are in gold.  The gray nodes are variables nodes at distance $2$ from the cavity.}
\label{figCavity}
\end{figure}

We would like to express that in a Bethe state, for `almost all' small cavities $U$ the joint distribution $\mu_{G, U}$ of the variables in $ U$ is given by the internal Gibbs measure $\mu_{G[U]}$ of the cavity times an `independent boundary condition'. The internal distribution of the cavity $U$ is simply the Gibbs measure on the induced subgraph $G[U]$,
\begin{align*}
\mu_{G[U]}[\sigma] & \propto \prod_{a \in G[U]} \psi_a(\sigma|_{\partial a}) \, ,
\end{align*}
and this distribution is particularly easy to understand as $G[U]$ is acyclic. 
To define the boundary condition, suppose that $\emptyset\neq S\subset\Omega^V$ is a set of configurations, that $x$ is a  variable node and that $a\in\partial x$.
Then we let
\begin{align}\label{eqmsg2}
	\mu_{G,a\to x}[\tau|S]&\propto\sum_{\sigma\in S}\vecone\{\sigma_x=\tau\}
		\prod_{b\in F\setminus(\partial x\setminus a)}\psi_b(\sigma|_{\partial b})\qquad(\tau\in\Omega)
	\end{align}
be the Gibbs marginal of $x$ given $S$ in the factor graph obtained from $G$ by removing all constraint nodes $b\in\partial x$ except $a$. We call $\mu_{G,a\to x}[\nix | S]$ the \textit{standard message from $a$ to $x$  given $S$}. 

With this definition, and recalling that for $a \in \partial U$, $x_{a\to U}$ is the unique $x \in U\cap \partial a$, the aforementioned ``internal times independent boundary'' distribution reads
	\begin{align}\label{eqMubar}
	\bar\mu_{G,U}[\sigma |S] & \propto  \mu_{G[U]}[\sigma ]  \prod_{a \in \partial U} \mu_{G,a\to x}[\sigma_{x_{a\to U}}|S]\,.
	\end{align}
Guided by~\cite[\Chap~19]{MM}, we call $S$ an {\em $(\eps,\ell)$-Bethe state} 
 of $G$ if for all $1\leq r\leq l\leq\ell$,	
	\begin{align}\label{eqBetheState}
	\sum_{U\in\cC(G,l,r)}\sum_{\sigma\in \Omega^{ U}}\abs{\mu_{G, U}[\sigma|S]
		-\bar\mu_{G,U}[\sigma|S]}
				&<\eps\cdot|\cC(G,l,r)|.
	\end{align}
In words, for almost all cavities the actual induced Gibbs distribution $\mu_{G, U}[\sigma|S]$ is close in total variation distance to the idealized distribution from (\ref{eqMubar}).

\begin{theorem}\label{Thm_BetheStates}
For any $k\geq2$, $\Omega\neq\emptyset$, $\Psi$ and $d>0$ and for any sequence $L=L(n)\to\infty$ there
exist $\eps=\eps(n)\to0$, $K=K(n)\to\infty$ such that the random factor graph $\G$ has the following property \whp\
There exist pairwise disjoint non-empty sets $S_1=S_1(\G),\ldots,S_l=S_l(\G)\subset\Omega^{V_n}$ with $1\leq l\leq L(n)$ such that
	\begin{enumerate}[(i)]
	\item $\sum_{j=1}^l\mu_{\G}(S_j)\geq1-\eps$,
	\item For each $j=1, \ldots l$, $S_j$ is an $(\eps,\ell)$-Bethe state of $\G$ for each $\ell=1,\ldots,K$.
	\end{enumerate}
\end{theorem}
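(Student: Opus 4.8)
The plan is to combine the pinning lemma (\Lem~\ref{Lemma_pinning}) with a deterministic structural statement --- on a locally tree-like factor graph, any sub-cube whose conditional Gibbs measure is $\delta$-symmetric is automatically an $(\eps,\ell)$-Bethe state --- and then diagonalise over $n$. Precisely, it suffices to show that for every $\eps>0$ and $\ell\in\NN$ there are $\delta=\delta(\eps,\ell)>0$ and $L_0=L_0(\eps,\ell)$ such that \whp\ $\G$ enjoys two properties: (a) there are pairwise disjoint sub-cubes $S_1,\dots,S_l\subset\Omega^{V_n}$, $l\le L_0$, with $\sum_j\mu_\G(S_j)\ge1-\eps$ and $\mu_\G[\nix|S_j]$ $\delta$-symmetric for every $j$; and (b) every sub-cube $S=S^{I,\tau}$ with $|I|\le L_0$ and $\mu_\G[\nix|S]$ $\delta$-symmetric is an $(\eps,\ell)$-Bethe state. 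Granting this for all $(\eps,\ell)$, one picks $\eps(n)\to0$ and $K(n)\to\infty$ decaying resp.\ growing slowly enough that $L_0(\eps(n),K(n))\le L(n)$ and the ``\whp'' events have probability $1-o(1)$; since an $(\eps,K)$-Bethe state is an $(\eps,\ell)$-Bethe state for all $\ell\le K$, the theorem follows. Note that $\THETA$ --- hence $L_0$ --- blows up as $\eps\to0$, which is precisely why $L(n)\to\infty$ is needed.

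Property (a) is immediate: apply \Lem~\ref{Lemma_pinning} to $\mu_\G$ with parameter $\delta$ to obtain a bounded $\THETA$ and a uniform $\vI\subset V_n$, $|\vI|=\THETA$, with $\mu_\G^{\vI,\SIGMA}$ $\delta$-symmetric with probability $>1-\delta$ over $\SIGMA\sim(\mu_\G)_{\vI}$; fix a good realisation of $\vI$ and let $S_1,\dots,S_l$ be the sub-cubes $S^{\vI,\sigma}$ with $(\mu_\G)_{\vI}(\sigma)\ge\delta|\Omega|^{-\THETA}$ and $\mu_\G^{\vI,\sigma}$ $\delta$-symmetric, so $l\le L_0:=|\Omega|^{\THETA}$ and $\sum_j\mu_\G(S_j)\ge1-\delta-\sqrt\delta\ge1-\eps$ for $\delta$ small. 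Everything therefore rests on property (b).

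For (b), fix a sub-cube $S=S^{I,\tau}$ with $|I|\le L_0$ and $\mu:=\mu_\G[\nix|S]$ $\delta$-symmetric, and fix $1\le r\le l\le\ell$ (there is nothing to prove if $\cC(\G,l,r)=\emptyset$). A routine first-moment computation for the Poisson random factor graph shows that \whp\ all but an $o(1)$-fraction of $U\in\cC(\G,l,r)$ are \emph{tame}: the radius-$2$ ball around $U$ in the bipartite incidence graph is a forest, $\partial^2U:=\bigcup_{a\in\partial U}(\partial a\setminus U)$ has size $\le k\ell$, and $U\cap I=\emptyset$; and since $\delta$-symmetry forces $\sum_{i\ne j}\TV{\mu_{i,j}-\mu_i\tensor\mu_j}\le\delta n^2$ while the sets $\partial^2U$ are small and spread out, all but an $o(1)$-fraction are moreover \emph{very tame}, i.e.\ in addition the joint law of $\sigma|_{\partial^2U}$ under $\mu$ is within $o(1)$ of $\bigotimes_{a\in\partial U}\mu_{\partial a\setminus U}$ (via \Prop~\ref{Cor_symmetry} applied to these $\le k\ell$ coordinates). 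As each term of the sum in \eqref{eqBetheState} is at most $2$, the non-very-tame cavities contribute $o(|\cC(\G,l,r)|)$, so it suffices to bound $\sum_{\sigma\in\Omega^U}|\mu_{\G,U}[\sigma|S]-\bar\mu_{\G,U}[\sigma|S]|$ by $\eps/2$ for each very tame $U$. Writing $W=V_n\setminus U$ and using that $S$ is a sub-cube disjoint from $U$, one gets the exact identity
\[
\mu_{\G,U}[\sigma|S]\ \propto\ \mu_{\G[U]}[\sigma]\,\Phi(\sigma),\qquad
\Phi(\sigma)=\bck{\prod_{a\in\partial U}\psi_a\bc{\sigma_{x_{a\to U}},\,\rho|_{\partial a\setminus U}}}_{\!\rho\sim\mu_{\G_W}[\nix|S_W]},
\]
where $\G_W$ is $\G$ with $U$ and all constraints meeting $U$ removed and $S_W$ is the projection of $S$ to $W$. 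Very tameness yields $\Phi(\sigma)\approx\prod_{a\in\partial U}\phi_a(\sigma_{x_{a\to U}})$ with $\phi_a(\,\cdot\,)=\bck{\psi_a(\,\cdot\,,\rho|_{\partial a\setminus U})}$, and a correlation-decay argument identifies $\phi_a(\tau)$ up to normalisation with the standard message $\mu_{\G,a\to x_{a\to U}}[\tau|S]$; substituting reproduces $\bar\mu_{\G,U}[\nix|S]$ up to $o(1)$ in total variation, which is $<\eps/2$ once $\delta$ is small and $n$ large.

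The main obstacle is this identification $\phi_a(\tau)\propto\mu_{\G,a\to x_{a\to U}}[\tau|S]$: the standard message deletes only the \emph{other} constraints at $x_{a\to U}$, whereas $\phi_a$ arises by deleting the entire cavity together with its boundary constraints, and one must show that the two surgeries produce the same marginal on $\partial a\setminus U$ up to $o(1)$, uniformly over a $1-o(1)$ fraction of cavities. This is again a correlation-decay estimate --- the structure retained by the message ($\G[U]$ together with the sibling boundary constraints) meets $\partial a\setminus U$ only through the handful of variables in $U$, which under $\delta$-symmetry are generically nearly independent of $\partial^2U$ after conditioning on $S$ --- and, together with carrying out the cavity counts at the correct polynomial scale for every pair $(l,r)$ (in particular for the structured cavities with $r<l$), it is where the real work concentrates. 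The remaining ingredients are the triangle inequality (\Lem~\ref{Lemma_xtriangle}), \Prop~\ref{Cor_symmetry}, and bookkeeping of the dependence of $\delta$ on $\eps$ and $\ell$.
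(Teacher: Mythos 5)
There is a genuine gap, and it sits exactly where the paper's proof does its real work. Your plan pins the \emph{full} graph and hopes that $\delta$-symmetry of $\mu_{\G}[\nix|S]$ yields, for most cavities $U$, the approximate product structure needed to (a) split $\Phi$ into $\prod_{a\in\partial U}\phi_a$ and (b) identify $\phi_a$ with the standard message. But both steps require factorization of the \emph{cavity-removed} measure $\mu_{\G_W}[\nix|S_W]$ (equivalently, of $\mu_{\G\setminus\partial U,Y}$ as in the hypothesis \eqref{eqDecorAssumpt} of \Lem~\ref{lemBethe1}) on the distance-two variables, whereas your ``very tameness'' is a property of $\mu=\mu_{\G}[\nix|S]$. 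These two measures differ, on $\Omega^{\partial^2 U}$, by reweighting with the positive bounded factor obtained by summing out the cavity and its boundary constraints; such a bounded, non-product reweighting does \emph{not} preserve closeness-to-product on a fixed small set of coordinates (a density $\propto 1+\tfrac12\vecone\{\sigma_x=\sigma_y\}$ already destroys near-independence of $x,y$), and it is precisely the removed boundary constraints that couple the blocks $\partial a\setminus U$ through $U$. So the assertion ``very tameness yields $\Phi(\sigma)\approx\prod_a\phi_a(\sigma_{x_{a\to U}})$'' is unjustified, and the ``main obstacle'' you flag (identifying $\phi_a$ with $\mu_{\G,a\to x}[\nix|S]$) is left as a correlation-decay estimate under the wrong measure; as written the crux of the theorem is missing. (A secondary, more fixable looseness: \Prop~\ref{Cor_symmetry} gives factorization on average over tuples, so passing to the specific tuples $\partial^2U$ requires an equidistribution argument, and your uniform-over-all-subcubes formulation of (b) demands failure probabilities far stronger than such averaged statements deliver.)

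The paper avoids this transfer problem rather than solving it: by \Lem~\ref{lemCoupling} the random graph is re-generated with the cavity planted on fresh variable nodes, so that the cavity-free graph $\check\G$ is independent of the cavity and the boundary attachment points are uniform in it; the pinning (\Thm~\ref{Thm_pin} plus \Prop~\ref{Cor_symmetry}) is then applied to $\mu_{\check\G}$ itself, and contiguity ($\mu_{\G'}\le c^{LKd}\mu_{\check\G}$) is used only to transfer the \emph{probability of the good set of pinning values} $B_{\vI}$ from $\mu_{\check\G,\vI}$ to $\mu_{\G',\vI}$ --- never to transfer symmetry of a fixed conditional measure across the two graphs (\Lem~\ref{lemDecor}). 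After that, \Lem~\ref{lemBethe1} is the deterministic local computation you envisage, and it indeed yields both the Bethe-state property and the BP fixed-point property from the single factorization hypothesis. To repair your argument you would either have to reproduce this coupling-plus-contiguity device, or supply an independent proof that for most cavities the cavity-removed conditional measure factorizes on $\partial^2U$; the $\delta$-symmetry of the full conditional measure alone does not give it.
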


\noindent
Thus, \whp\ the random factor graph $\G$ admits a decomposition of $\Omega^{V_n}$  into Bethe states $S_1,\ldots,S_l$ that cover almost the entire probability mass.
Moreover, choosing a suitable $L=L(n)$ we can ensure that the total number of Bethe states in this decomposition diverges only very slowly.

The definition (\ref{eqBetheState}) of a Bethe state suits the situation of random factor graph models very well.
Indeed, the average variable node degree in $\G$ is asymptotically equal to $d$, and thus bounded independently of $n$.
Hence, the subgraph of $\G$ within a bounded distance of all but a bounded number of variable nodes is of bounded size and acyclic and thus a cavity \whp\
Consequently, in a random factor graph $\G$ the condition (\ref{eqBetheState}) characterizes both short-range and long-range correlations.
Specifically, to characterize the distribution induced by $\mu_{\G}$ in the vicinity of a variable node $x$ we just let $U$ be a bounded-depth neighborhood of $x$ in $\G$, as in Theorem~\ref{thmPotts}.
With respect to long-range correlations, if we choose two variable nodes $x,y$ randomly, then typically their distance in $\G$ will be as large as $\Omega(\ln n)$ (because $\G$ is of bounded average degree).
Thus, due to the product structure of (\ref{eqMubar}) over the connected components of a cavity, (\ref{eqBetheState}) shows that bounded-size cavities around $x,y$ are essentially uncorrelated.
Therefore, in perfect analogy to (\ref{eqRSpotts}) we obtain from \Thm~\ref{Thm_BetheStates} immediately that
	\begin{align}\label{eqRSBG}
	\lim_{n\to\infty}\frac1{n^2}\sum_{x,y\in V_n, x\neq y}\Erw\brk{\sum_{i=1}^l\mu_{\G}(S_i)\TV{\mu_{\G,x,y}[\nix|S_i]
		-\mu_{\G,x}[\nix|S_i]\tensor \mu_{\G,y}[\nix|S_i]}}&=0.
	\end{align}

Clearly, in order to actually use the Bethe state decomposition we need to evaluate the expression (\ref{eqMubar}).
The Gibbs distribution of the cavity $G[U]$ itself is easy to cope with because $G[U]$ is acyclic.
However, it is less clear how to get a handle on the messages $\mu_{G,a\to x}[\nix|S]$.
A key hypothesis of the cavity method is that these quantities are (approximate) fixed points of a certain non-linear operator that can be written down explicitly in term of just the graph $G$, the {\em Belief Propagation operator}~\cite[\Chap~19]{MM}.

Our second main result verifies this hypothesis.
To state the result, we define the {\em message space} $\cM(G)$ of a factor graph $G=(V,E,(\partial a)_{a\in F},(\psi_a)_{a\in F})$ as the set of all families $\nu=(\nu_{x\to a},\nu_{a\to x})_{x\in V,a\in F,x\in\partial a}$ of probability distributions $\nu_{x\to a},\nu_{a\to x}\in\cP(\Omega)$.
We define a metric $\cD_1$ on $\cM(G)$ by letting
	\begin{align*}
	\cD_1(\nu,\nu')&=\frac1{|V|}\sum_{x\in V,a\in\partial x}
		\TV{\nu_{x\to a}-\nu'_{x\to a}}+\TV{\nu_{a\to x}-\nu'_{a\to x}}.
	\end{align*}
The {\em Belief Propagation operator} of $G$ is the map $\BP:\cM(G)\to\cM(G)$, $\nu\mapsto\hat\nu=\BP(\nu)$ defined by
	\begin{align*}
	\hat\nu_{x\to a}(\sigma)&\propto\prod_{b\in\partial x\setminus a}\nu_{b\to x}(\sigma),&
	\hat\nu_{a\to x}(\sigma)&\propto\sum_{\tau\in\Omega^{\partial a}}\vecone\{\tau_x=\sigma\} \psi_a(\tau|_{\partial a})
		\prod_{y\in\partial a\setminus x}\nu_{y\to a}(\tau_y).
	\end{align*}
We call $\nu\in\cM(G)$ an {\em $\eps$-Belief Propagation fixed point} if $\cD_1(\nu,\BP(\nu))<\eps.$

Finally, having defined the messages $\mu_{G,a\to x}[\nix|S]$ in (\ref{eqmsg2}) already, we define the {\em standard message from $x$ to $a$} by
	\begin{align}\label{eqmsg1}
	\mu_{G,x\to a}[\tau|S]&\propto\sum_{\sigma\in S}\vecone\{\sigma_x=\tau\}\prod_{b\in F\setminus a}
		\psi_b(\sigma|_{\partial_b}).
		&(\tau\in\Omega)
	\end{align}
Thus, $\mu_{G,x\to a}[\tau|S]$ is the Gibbs marginal of $x$ given $S$ in the factor graph obtained from $G$ by deleting the constraint node $a$.

\begin{corollary}\label{Cor_BP}
Under the assumptions of \Thm~\ref{Thm_BetheStates} \whp\ there exists a Bethe state decomposition $S_1,\ldots,S_l$ with the additional property:
	\begin{enumerate}[(i)]
	\item[(iii)] the set of canonical messages $(\mu_{\G,x\to a}[\nix|S_i],\mu_{\G,a\to x}[\nix|S_i])_{x\in V_n,a\in\partial x}$
		is an $\eps$-Belief Propagation fixed point  for each $i=1,\ldots,l$.
	\end{enumerate}
\end{corollary}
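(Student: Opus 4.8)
To establish \Cor~\ref{Cor_BP}, the plan is to show that the decomposition $S_1,\dots,S_l$ already produced by \Thm~\ref{Thm_BetheStates} — run with a slowly growing $K=K(n)\to\infty$ — satisfies the extra condition (iii) verbatim; no new construction is needed, because I expect the Belief Propagation identity to be a free consequence of the Bethe property together with the fact that in $\G$ the bounded-depth neighbourhood of a typical variable node is a tree. Fix $i$, write $S=S_i$, and let $\nu$ denote the family of canonical messages attached to $S$. Since $\cD_1(\nu,\BP(\nu))$ is the average over variable nodes $x$ and incident constraints $a\in\partial x$ of $\TV{\mu_{\G,x\to a}[\nix|S]-\hat\nu_{x\to a}}+\TV{\mu_{\G,a\to x}[\nix|S]-\hat\nu_{a\to x}}$, it is enough to bound each such summand by $o(1)$ for all but an $o(1)$-fraction of the pairs $(x,a)$ (the remaining summands being bounded, and the degrees of $\G$ concentrated). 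One preliminary observation I would use throughout: because every $\psi\in\Psi$ is strictly positive and $\Psi$ is finite, all messages occurring in the argument lie in a fixed compact subset of the open simplex, on which the two local Belief Propagation maps are Lipschitz; thus it suffices to match the relevant probability vectors up to small errors in total variation.

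First I would isolate a \emph{good} event for variable nodes: for any fixed $\ell$, a routine first-moment bound on short cycles, multi-edges and locally overlapping constraints, together with a truncation of the degrees, shows that \whp\ all but $o(n)$ variable nodes $x$ have an acyclic incidence-neighbourhood out to radius $2\ell$, so that the ball $N_x$ of variable nodes within incidence-distance $2\ell$ of $x$ satisfies CAV1--CAV3, and so do the analogous balls around the variable nodes at distance $\le 2$ from $x$. The bad variable nodes, being $o(n)$ in number and contributing bounded summands, are negligible for $\cD_1(\nu,\BP(\nu))$, so I may restrict attention to good $x$.

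For a good $x$ I would feed the cavity $U=N_x$ into the $(\eps,\ell)$-Bethe property of $S$; for all but an $o(1)$-fraction of good $x$ this yields $\TV{\mu_{\G,U}[\nix|S]-\bar\mu_{\G,U}[\nix|S]}=o(1)$. As $\G[U]$ is a tree, $\bar\mu_{\G,U}[\nix|S]$ is literally the Gibbs measure of that tree carrying the independent boundary condition $(\mu_{\G,c\to x_{c\to U}}[\nix|S])_{c\in\partial U}$, so exact Belief Propagation, run outward from the leaves of $U$, computes all of its marginals and defines messages on every internal edge of $\G[U]$ that satisfy the Belief Propagation recursion identically. The heart of the matter is then the identification, for a good $x$ and each $a\in\partial x$, of these tree messages on the edge $\{x,a\}$ with the canonical messages $\mu_{\G,x\to a}[\nix|S]$ and $\mu_{\G,a\to x}[\nix|S]$, up to $o(1)$ in total variation and up to an $o(1)$-fraction of exceptional $x$. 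Granting this, the Belief Propagation recursion at $x$ — which only involves edges inside $U$ once $\ell\ge 2$ — transfers from the tree messages to the canonical messages with $o(1)$ error by the Lipschitz property; averaging over $x$ then gives $\cD_1(\nu,\BP(\nu))=o(1)$, and a diagonal argument yields $\eps=\eps(n)\to 0$ and $K=K(n)\to\infty$ for which a single decomposition satisfies (i), (ii) and (iii) simultaneously.

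The main obstacle is exactly that identification step. The canonical messages $\mu_{\G,x\to a}[\nix|S]$ and $\mu_{\G,a\to x}[\nix|S]$ are \emph{global} objects — Gibbs marginals on all of $\G$ after deleting a bounded set of constraint nodes incident to $x$ — whereas the tree messages produced above live on the truncated graph $\G[U]$ with an artificial boundary condition, so one must bridge the gap. I would do so by arguing that $S$ is simultaneously an (approximate) Bethe state of each of the bounded perturbations $\G-(\partial x\setminus a)$ and $\G-a$ of $\G$, applying the Bethe property of $S$ in each of these graphs to the corresponding cavity around $x$; equivalently, one runs the Bethe property on a whole nested family of cavities of radii $0,1,\dots,2\ell$ about $x$ and propagates the error inward. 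This is where strict positivity of the weights is indispensable — it confines all intermediate messages to a compact part of the simplex and so stops the accumulating normalisation constants from amplifying the $o(1)$ errors — and it is also the point at which the multi-component cavities allowed in \eqref{eqBetheState} (the cases $r>1$) may be needed, to guarantee that the perturbations at $x$ do not disturb the boundary messages feeding into $U$.
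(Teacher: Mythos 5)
You correctly isolate the crux---relating the canonical messages $\mu_{\G,x\to a}[\nix|S]$ and $\mu_{\G,a\to x}[\nix|S]$, which are Gibbs marginals of globally modified graphs, to the local data supplied by the Bethe property---but the bridge you propose does not close it. The claim that $S$ is ``simultaneously an approximate Bethe state of each of the bounded perturbations $\G-(\partial x\setminus a)$ and $\G-a$'' is not available: \Thm~\ref{Thm_BetheStates} is an existence statement about a decomposition adapted to $\G$ alone, it says nothing about the $\Theta(n)$ perturbed graphs (each of which has its own message set entering the definition \eqref{eqMubar}), and contiguity of the Gibbs measures under removal of boundedly many constraints transfers decorrelation-type bounds, not the Bethe property itself, whose defining comparison changes with the graph. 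The alternative you mention---``run the Bethe property on a nested family of cavities and propagate the error inward''---is where a proof would have to live, but as stated it is not an argument. For the record, the step can be repaired without any perturbed-graph Bethe statements: if $U$ is the incidence-radius-$2$ ball around a good $x$, then every constraint node deleted in the definition of the canonical messages at $x$ lies in $\partial x$ and has all its variables inside $U$, so for any $B\subseteq\partial x$ the marginal on $U$ of $\mu_{\G-B}[\nix|S]$ is obtained from $\mu_{\G,U}[\nix|S]$ by dividing by $\prod_{b\in B}\psi_b$ and renormalising; positivity of the finitely many weight functions makes this operation Lipschitz in total variation, so the Bethe approximation for the single cavity $U$ already determines all canonical messages at $x$ up to $o(1)$, and the BP identities then follow from the exact computation on the idealized measure \eqref{eqMubar}. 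You would also need the (fixable, but unaddressed) counting step that radius-$2$ balls form a non-vanishing proportion of the connected cavities of each bounded size, so that the cavity-averaged bound \eqref{eqBetheState} controls most balls via Markov's inequality.

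It is also worth pointing out that the paper does not deduce (iii) from (ii) at all: both are obtained in parallel from a single decorrelation statement---with high probability after pinning, the joint law of the variable nodes at distance two from $x$ in $\G\setminus\partial x$ approximately factorizes (Lemma~\ref{lemDecor})---which is fed into the two halves of Lemma~\ref{lemBethe1}; the case $r=l=1$ of that lemma yields the $\eps$-BP fixed point property directly for the subcube decomposition $S^{\vI,\tau}$. So your plan, once the identification step is actually carried out along the lines above, would establish a somewhat stronger fact (any decomposition satisfying (i)--(ii) with cavities large enough to contain radius-$2$ balls automatically satisfies (iii)), but as written the central step of your argument is missing.
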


Finally, we observe that the partition given in Theorem~\ref{Thm_BetheStates} is essentially unique 
in the sense that any two such decompositions render approximations of the Gibbs distribution of $\G$ that are close under the cut metric.

\begin{corollary}\label{Cor_BetheCutMetric}
 For every $\eps' >0$ there is $\eps >0$ so that the following is true. Let $S_1, \dots S_l$ 
 a partition of $\Omega^{V_n}$ 
 satisfies the statements (i) and (ii) from \Thm~\ref{Thm_BetheStates}.
 Let $z=\sum_{j=1}^l\mu_{\G}(S_j)$.
 Then
 	\begin{align*}
	{\Cutm\bc{\mu_{\G},\frac1z\sum_{j=1}^l\mu_{\G}(S_j)\bigotimes_{i=1}^n\mu_{\G,x_i}[\nix|S_j]}}=o(1).
	\end{align*}
\end{corollary}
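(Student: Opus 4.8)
The plan is to reduce the corollary to \Prop~\ref{Cor_states}, which converts a decomposition of $\Omega^{V_n}$ into (almost) $\eps$-symmetric parts into a cut-metric approximation by the corresponding mixture of product measures. Given $\eps'>0$ it therefore suffices to arrange that, once the Bethe parameter $\eps$ is small and $n$ large, each conditional measure $\mu_{\G}[\nix|S_j]$ is $(\eps'/18)^3$-symmetric: since $\sum_j\mu_{\G}(S_j)>1-\eps>1-\eps'/2$ by statement (i), \Prop~\ref{Cor_states} applied with its parameter set to $\eps'/2$ then yields $\Cutm(\mu_{\G},\frac1z\sum_j\mu_{\G}(S_j)\bigotimes_i\mu_{\G,x_i}[\nix|S_j])<\eps'$, which is the assertion (and positivity of the masses is automatic because all weight functions are strictly positive). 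So the real work is to extract $\eps$-symmetry of $\mu_{\G}[\nix|S_j]$ from the Bethe-state property, and for this only cavities of size at most two are needed; statement (ii) is used merely for $\ell=2$ and the growth $K(n)\to\infty$ plays no role.

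First I would record, by a routine first-moment computation on $\G$, that \whp\ the following holds: for all but $o(n)$ variable nodes $x$ the singleton $\{x\}$ is a cavity with $\G[\{x\}]$ free of constraint nodes, so $\{x\}\in\cC(\G,1,1)$ and $|\cC(\G,1,1)|=(1-o(1))n$; and for all but $o(n^2)$ pairs $\{x,y\}$ the set $\{x,y\}$ is a cavity with exactly two connected components and with $\G[\{x,y\}]$ free of constraint nodes, so $\{x,y\}\in\cC(\G,2,2)$ and $|\cC(\G,2,2)|=(1-o(1))\binom n2$. The exceptional singletons and pairs are precisely those incident to a self-loop, a short cycle, or a constraint joining the two nodes, and the expected number of such local configurations in a sparse random factor graph is $o(n)$, resp.\ $o(n^2)$. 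I expect this bookkeeping --- matching the various exceptions against the cavity axioms CAV1--CAV3 --- to be the most tedious step, although it uses only standard estimates.

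Next, fix a Bethe state $S=S_j$. For a pair cavity $U=\{x,y\}$ of the above type, $\G[U]$ has the two one-vertex components $\{x\}$ and $\{y\}$, and CAV3 together with the absence of an internal constraint forces $\partial U=\partial\{x\}\sqcup\partial\{y\}$ with $x_{a\to U}=x$ for $a\in\partial\{x\}$ and $x_{a\to U}=y$ for $a\in\partial\{y\}$; since the messages in (\ref{eqmsg2}) are global objects not depending on the chosen cavity, the definition (\ref{eqMubar}) then factorizes as
	\begin{align*}
	\bar\mu_{\G,\{x,y\}}[\nix|S]&=\bar\mu_{\G,\{x\}}[\nix|S]\tensor\bar\mu_{\G,\{y\}}[\nix|S].
	\end{align*}
Now apply the Bethe inequality (\ref{eqBetheState}) for $(l,r)=(1,1)$ and for $(l,r)=(2,2)$. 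Its left-hand side equals $2\sum_U\TV{\mu_{\G,U}[\nix|S]-\bar\mu_{\G,U}[\nix|S]}$, so by Markov's inequality $\TV{\mu_{\G,x}[\nix|S]-\bar\mu_{\G,\{x\}}[\nix|S]}<\sqrt\eps$ for all but a $O(\sqrt\eps)+o(1)$ fraction of variable nodes $x$, and $\TV{\mu_{\G,x,y}[\nix|S]-\bar\mu_{\G,\{x,y\}}[\nix|S]}<\sqrt\eps$ for all but a $O(\sqrt\eps)+o(1)$ fraction of pairs $\{x,y\}$.

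Finally, combining the last two estimates with the factorization through the triangle inequality and the elementary bound $\TV{a\tensor b-a'\tensor b'}\le\TV{a-a'}+\TV{b-b'}$, I obtain $\TV{\mu_{\G,x,y}[\nix|S]-\mu_{\G,x}[\nix|S]\tensor\mu_{\G,y}[\nix|S]}<3\sqrt\eps$ for all but a $O(\sqrt\eps)+o(1)$ fraction of pairs, the remaining pairs contributing at most $1$ each; averaging over $x\neq y$ then shows that $\mu_{\G}[\nix|S_j]$ is $(C\sqrt\eps+o(1))$-symmetric for an absolute constant $C$, uniformly in $j$. Choosing $\eps$ small enough, and then $n$ large enough to swallow the $o(1)$, drives this below $(\eps'/18)^3$, and the reduction to \Prop~\ref{Cor_states} described above finishes the argument. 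The one genuinely new point is the observation that size-$\le 2$ cavities already carry all the $\eps$-symmetry information one needs; everything else is triangle inequalities and the standard combinatorics of sparse random factor graphs.
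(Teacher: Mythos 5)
Your proof is correct, but it is packaged quite differently from what the paper does: the paper dispatches \Cor~\ref{Cor_BetheCutMetric} with a one-line proof declaring it immediate from \Thm~\ref{Thm_pin} and \Prop~\ref{Cor_symmetry}, implicitly relying on the observation (made informally around (\ref{eqRSBG})) that the Bethe property forces approximate symmetry within each state, together with the passage from symmetric parts to a product-mixture approximation in the cut metric. You reconstruct precisely that implicit chain, and you do so without ever invoking the pinning machinery of \Thm~\ref{Thm_pin}: you use only the $\ell=2$ instances of (\ref{eqBetheState}), check by a first-moment count that whp $|\cC(\G,1,1)|=(1-o(1))n$ and $|\cC(\G,2,2)|=(1-o(1))\binom n2$, note that for a pair cavity without internal constraints the measure $\bar\mu_{\G,\{x,y\}}[\nix|S]$ factorizes into the two singleton Bethe approximations because the messages (\ref{eqmsg2}) do not depend on the cavity, and then Markov plus the triangle inequality give that each $\mu_{\G}[\nix|S_j]$ is $(C\sqrt\eps+o(1))$-symmetric uniformly in $j$, after which \Prop~\ref{Cor_states} (equivalently \Lem~\ref{Lemma_cutm} plus \Lem~\ref{Lemma_xtriangle}, which is also all that the paper's cited \Prop~\ref{Cor_symmetry}(i) route would use) finishes. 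What your route buys is a self-contained, quantitative argument valid for an arbitrary partition satisfying (i)--(ii), independent of how it was produced; what the paper's citation buys is only brevity. One cosmetic point: for pair cavities the exceptional configurations are not only self-loops, short cycles and constraints joining $x$ to $y$, but also CAV3 violations in which a constraint at $x$ and a constraint at $y$ share an outside variable (a path of length four rather than a cycle); the expected number of such paths per pair is $O(1/n)$, so your count $|\cC(\G,2,2)|=(1-o(1))\binom n2$ and hence the whole argument stands unchanged.
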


\Thm~\ref{Thm_BetheStates} and \Cor~\ref{Cor_BP} contribute to the conceptual vindication of the replica symmetry breaking hypothesis.
In particular, they establish for the first time in a generic, universal way a connection between the fixed points of the Belief Propagation operator and the actual Gibbs measure of a random factor graph.
Specifically, (\ref{eqRSBG}) shows the absence of extensive long-range correlations within Bethe states, (\ref{eqBetheState}) characterizes the conditional distribution within cavities in terms of messages and \Cor~\ref{Cor_BP} shows that the canonical messages of the Bethe states are approximate Belief Propagation fixed points.

\medskip
\paragraph{\em Discussion and related work.}

While the existence of a Bethe state decomposition is a universal feature of random factor graphs, the specific Bethe states depend on  the model.
Indeed, as the example of the Potts antiferromagnet shows the Bethe state decomposition may undergo phase transitions as the parameter values of the model change.
For instance, for some models/parameter values  there may be a decomposition consisting of just a single Bethe state.
This is called the {\em replica symmetric} case in physics jargon.
The Potts antiferromagnet on the \Erdos-\Renyi\ graph with average degree $d$ below the condensation threshold $\dc(q,\beta)$ is an example of a replica symmetric model.
Indeed, by generalization of (\ref{eqRS}) replica symmetric models are characterized by the condition
	\begin{align}\label{eqRSG}
	\lim_{n\to\infty}\frac1{n^2}\sum_{x,y\in V_n,x\neq y}\Erw\TV{\mu_{\G,x,y}-\mu_{\G,x}\tensor\mu_{\G,y}}&=0.
	\end{align}
In a prior paper~\cite{BP} we showed that the condition (\ref{eqRSG}) implies that the standard messages
$(\mu_{\G,x\to a},\mu_{\G,a\to x})_{x\in V_n,a\in\partial x}$ are approximate Belief Propagation fixed points.
In fact, under a mild additional assumption the free energy $\frac1n\Erw[\ln Z(\G)]$ can be computed from the standard messages.
However, from a practical viewpoint it is, of course, highly challenging to verify the replica symmetry condition (\ref{eqRSG}).
Hence, by comparison to~\cite{BP} the achievement of the present work is that the main results hold universally, in the replica symmetric phase and beyond.

According to the cavity method there are several significantly different scenarios of how the Bethe state decomposition might look in models where the replica symmetry condition (\ref{eqRSG}) is violated~\cite{Marinari}.
First, it could be that there is a decomposition into a {\em bounded} number of Bethe states.
The ferromagnetic Ising model on the random graph is conjectured to be an example~\cite{demboPotts}.
But in other models a bounded number of Bethe states is not expected to suffice.
The Potts antiferromagnet on the \Erdos-\Renyi\ graph for $d>\dc(q,\beta)$ is conjectured to be an example of this case, which is called {\em static replica symmetry breaking} in physics jargon~\cite{pnas,MM}.

In some models, such as the Potts antiferromagnet on the \Erdos-\Renyi\ graph, the Gibbs measure may possess a decomposition into a large number of microscopic ``clusters'' even for $d<\dc(q,\beta)$, i.e., within the replica symmetric phase~\cite{pnas}.
Although each of these clusters only carries an exponentially small probability mass,
the clusters are conjectured to be $(\eps,\ell)$-Bethe state for certain $\eps=\eps(n)\to0$, $\ell=\ell(n)\to\infty$.
In fact, under certain assumptions on $q,d,\beta$ this has been established rigorously~\cite{Nor}.
The cluster decomposition has a significant impact on the mixing time of Markov chains such as Glauber dynamics, and therefore this scenario is referred to as {\em dynamic replica symmetry breaking}.
At first glance the existence of such an exponentially large microscopic decomposition might seem to contradict the assertion of \Thm~\ref{Thm_BetheStates} that the Gibbs measure merely decomposes into an arbitrarily slowly growing number of Bethe states.
Yet for $d<\dc(q,\beta)$ the entire configuration space $\Omega^{V_n}$ itself is a Bethe state~\cite{pnas}.
Hence, despite its impact on dynamics, the clustering  phenomenon is invisible from a global, thermodynamic vista.
The micro-Bethe states simply blur into one single giant Bethe state.

The diluted mean-field models studied here are considerably different than fully connected models such as the Sherrington-Kirkpatrick model, which are, by and large, far better understood. In particular, the TAP equations, the (simplified) fixed point equations that correspond to the Belief Propagation equations in the fully connected case, have been established in several cases, for instance~\cite{Auffinger}.

\subsection{Proofs}

The main idea behind the proofs of Theorem~\ref{Thm_BetheStates} and Corollary~\ref{Cor_BP} is that the random factor graph $\G$ can be constructed iteratively, adding one new variable node at a time with an appropriate random number of constraint nodes attached to it and to other randomly chosen variable nodes from the existing factor graph.  If we knew that at each step of this process the factor graph was $\eps$-symmetric for sufficiently small $\eps$, then we would know that typically the joint distribution of the variable nodes at distance two from the new variable node factorize in the old factor graph. This factorization property is exactly what occurs on an acyclic factor graph: if we remove any variable node, the variable nodes at distance two now belong to disconnected components and so their spins are independent. The same property holds when we remove any finite size cavity from an acyclic factor graph.  To obtain the approximate factorization property in the smaller factor graph we use the pinning procedure of Section~\ref{Sec_subcube}.   

It will be convenient to view the conditional measure $\mu_G^{I, \sigma}$ as the Gibbs measure of a factor graph $G^{I, \sigma}$ obtained from $G$ by adding a set of constraint nodes each attached to single variable nodes: for each $i \in I$, we add the constraint node $b_i$, joined to $i \in V(G)$, with $\psi_{b_i} (\omega) = \vecone \{ \omega = \sigma_i \}$.  These are \textit{hard constraints} that prescribe the values of the variables in $I$ to $\sigma|_I$, and thus $\mu_{G^{I,\sigma}} = \mu_G^{I,\sigma}$.   The \textit{pinned random factor graph} $\G^{\vI, \SIGMA}$ is obtained by drawing $\G$ from $\G  (n ,\vm, P)$, choosing $\SIGMA$ according to $\mu_{\G}$ and choosing $\vI \subset [n]$, $|\vI| = \THETA$, as in Section~\ref{Sec_subcube}.     

We begin  by showing that approximate factorization among boundary variable nodes suffices to obtain the local properties of an approximate Bethe state and Belief Propagation fixed point.  We will apply the following lemma to the pinned random factor graph $\G^{\vI,\SIGMA}$.

\begin{lemma}
\label{lemBethe1}
Fix a finite set of constraint functions $\Psi$. For every $\eps>0, R>0$ there is $\eps'>0$ so that the following is true. Let $G$ be any factor graph. Let $U \subset V(G)$ be a cavity of size at most $R$, $\partial U$ its boundary with $|\partial U| \le R$, and   $Y$  the set of variables nodes at distance $2$ from $U$, and suppose that all constraint nodes $a \in F(G)$ with $\partial a \cap U \ne \emptyset$ have $\psi_a \in \Psi$. Suppose that 
\begin{align}
\label{eqDecorAssumpt}
\left \| \mu_{G  \setminus  \partial U,Y} - \bigotimes_{y \in Y} \mu_{G \setminus  \partial U, y}   \right \|_{TV} < \eps' \, .
\end{align}
 Then with $\bar \mu_{G,U} [ \nix ]$ defined as in~\eqref{eqMubar}, 
 \begin{align*}
\sum_{\sigma \in \Omega^U} \left | \bar \mu_{G,U}(\sigma) - \mu_{G,U}(\sigma)  \right | < \eps \, .
\end{align*}
Moreover, suppose~\eqref{eqDecorAssumpt} holds for a cavity $U$ of size $1$, that is a variable node $x$ with $|\partial x| \le R$. Let $\mu$ denote the canonical messages $\mu_{G, y \to a}, \mu_{G, a\to y}$ and $\hat \mu= BP(\mu)$. Then
\begin{align*}
\sum_{a\in\partial x}\sum_{\sigma\in\Omega}
		\abs{\mu_{G,x\to a}(\sigma)-\hat \mu_{G,x\to a}(\sigma)}+\abs{\mu_{G,a\to x}(\sigma)-\hat \mu_{G,a\to x}(\sigma)} <\eps \, .
\end{align*}
\end{lemma}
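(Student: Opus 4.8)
The plan is to exploit that deleting the boundary constraints $\partial U$ disconnects the acyclic cavity $G[U]$ from the rest of $G$, so that both the true marginal $\mu_{G,U}$ and the canonical messages admit exact ``tree'' expressions in terms of the law of the variables $Y$ in $G\setminus\partial U$, into which one then feeds the decorrelation hypothesis \eqref{eqDecorAssumpt}. First I would record an exact disconnection identity: splitting $F(G)$ into the internal constraints $F[U]$ (all neighbours in $U$), the boundary constraints $\partial U$, and the ``outside'' constraints $F_{\mathrm{out}}$ (no neighbour in $U$), and using that by \textbf{CAV2} each $a\in\partial U$ meets $U$ only in $x_{a\to U}$, summing over configurations that restrict to $\sigma$ on $U$ factors the weight into $\psi_{G[U]}(\sigma)$ times a sum over $\Omega^{V\setminus U}$ involving only the $\partial U$- and $F_{\mathrm{out}}$-weights. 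Since $G\setminus\partial U$ splits as $G[U]$ on $U$ times $G'=(V\setminus U,F_{\mathrm{out}})$, and $\partial a\setminus U\subseteq Y$ for every $a\in\partial U$, this yields
\[
\mu_{G,U}(\sigma)\ \propto\ \psi_{G[U]}(\sigma)\,\Erw_{\rho\sim\mu_{G\setminus\partial U,Y}}\Bigl[\,\prod_{a\in\partial U}\psi_a\bigl(\sigma_{x_{a\to U}},\rho|_{\partial a\setminus U}\bigr)\Bigr].
\]

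Next, since $\Psi$ is finite and its weight functions are strictly positive on the finite domain $\Omega^k$, there is $c>1$ (depending only on $\Psi$) with $c^{-1}\le\psi_a\le c$; as $|\partial U|\le R$ the bracketed product lies in $[c^{-R},c^R]$ and depends on $\rho$ only through $\rho|_Y$, so by \eqref{eqDecorAssumpt} replacing $\mu_{G\setminus\partial U,Y}$ by $\bigotimes_{y\in Y}\mu_{G\setminus\partial U,y}$ multiplies every value by $1+O_R(\eps')$ uniformly in $\sigma$ (implied constants below may depend on $\Psi$). By \textbf{CAV3} the coordinate sets $\partial a\setminus U$, $a\in\partial U$, are pairwise disjoint, so under the product the bracket factorizes into $\prod_{a\in\partial U}\bar m_a(\sigma_{x_{a\to U}})$ with $\bar m_a(\tau):=\Erw[\psi_a(\tau,\rho|_{\partial a\setminus U})]$, $\rho|_{\partial a\setminus U}\sim\bigotimes_{y\in\partial a\setminus U}\mu_{G\setminus\partial U,y}$, taking values in $[c^{-1},c]$. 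Normalizing, $\mu_{G,U}$ is within $O_R(\eps')$ in total variation of the measure proportional to $\psi_{G[U]}(\sigma)\prod_{a\in\partial U}\bar m_a(\sigma_{x_{a\to U}})$; if in addition the normalized $\bar m_a$ agrees with the canonical message $\mu_{G,a\to x_{a\to U}}[\,\cdot\,]$ up to $O_R(\eps')$, then — that substitution perturbing only the $|\partial U|\le R$ bounded factors in \eqref{eqMubar} — we obtain $\TV{\mu_{G,U}-\bar\mu_{G,U}}=O_R(\eps')$, and choosing $\eps'$ small in terms of $\eps,R$ yields the first assertion.

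It thus remains to identify the messages with the normalized $\bar m_a$. The message $\mu_{G,a\to x}[\,\cdot\,]$ with $x=x_{a\to U}$ is the marginal of $x$ in the factor graph obtained from $G$ by deleting the constraints $\partial x\setminus a$; there $x$ is joined only to $a$, so integrating $x$ out this marginal is proportional to $\Erw_{\rho\sim\mu_\circ}[\psi_a(\nix,\rho|_{\partial a\setminus U})]$, where $\mu_\circ$ is obtained from $G\setminus\partial U$ by deleting the internal constraints incident to $x$ and adding the $\le R$ boundary constraints $\partial U\setminus\partial x$. In $G\setminus\partial U$ the marginal on $Y$ is the one in \eqref{eqDecorAssumpt} and the $U$- and $(V\setminus U)$-parts are independent; deleting internal constraints incident to $x$ leaves the marginal on $Y$ untouched, and accounting for the boundary constraints $\partial U\setminus\partial x$ all at once — by \textbf{CAV2} each has a single $U$-endpoint, and integrating out the ($Y$-independent) $U$-variables turns their combined weight into a bounded function of $\bigcup_{b}(\partial b\setminus U)\subseteq Y$, which by \textbf{CAV3} is disjoint from $\partial a\setminus U$ — shows that the product approximation of $\mu_{G\setminus\partial U,Y}$ survives with the marginal on $\partial a\setminus U$ still $O_R(\eps')$-close to $\bigotimes_{y\in\partial a\setminus U}\mu_{G\setminus\partial U,y}$, whence $\mu_{G,a\to x}[\tau]=\bar m_a(\tau)\big/\sum_{\tau'}\bar m_a(\tau')+O_R(\eps')$. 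For the Belief Propagation statement ($U=\{x\}$, so $\partial U=\partial x$ and $Y=\bigcup_{a\in\partial x}(\partial a\setminus\{x\})$) the same disconnection identity shows that $\mu_{G,x\to a}$ equals, up to $O_R(\eps')$, the normalization of $\prod_{b\in\partial x\setminus a}\bar m_b$ and each $\mu_{G,b\to x}$ the normalization of $\bar m_b$, so $\hat\mu_{G,x\to a}\propto\prod_{b\in\partial x\setminus a}\mu_{G,b\to x}$ matches $\mu_{G,x\to a}$; moreover each message $\mu_{G,y\to a}$ with $y\in\partial a\setminus\{x\}$ (the marginal of $y$ in $G$ with $a$ deleted, obtained from $G\setminus\partial x$ by restoring $\partial x\setminus a$, whose effect is a bounded reweighting by coordinates of $Y\setminus\{y\}$ by \textbf{CAV3}) agrees with $\mu_{G\setminus\partial x,y}$ up to $O_R(\eps')$, so $\hat\mu_{G,a\to x}$, assembled from $\psi_a$ and the messages $\mu_{G,y\to a}$, matches $\mu_{G,a\to x}$; summing the $\le R$ contributions and taking $\eps'$ small gives the second assertion.

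The main obstacle is not any single estimate but the bookkeeping of the previous paragraph: one must re-express each canonical message as a marginal in a suitably modified factor graph and verify, across a bounded number of single-constraint deletions and insertions, that \eqref{eqDecorAssumpt} — stated only for $G\setminus\partial U$ and the set $Y$ — still controls the marginal that actually enters. The cavity axioms \textbf{CAV1}--\textbf{CAV3} are precisely what make every such reweighting local, i.e.\ a bounded function of a set of $Y$-coordinates disjoint from the ones of interest, and finiteness of $\Psi$ is what converts the total-variation gap $\eps'$ into uniform multiplicative errors.
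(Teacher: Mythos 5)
Your proposal is correct and follows essentially the same route as the paper's proof: an exact factorization of $\mu_{G,U}$ across the cavity boundary using CAV1--CAV3, substitution of $\bigotimes_{y\in Y}\mu_{G\setminus\partial U,y}$ for $\mu_{G\setminus\partial U,Y}$ via \eqref{eqDecorAssumpt} with errors controlled by the bounded, strictly positive weights in $\Psi$ and $|U|,|\partial U|\le R$, and identification of the resulting boundary factors with the standard messages, with the $U=\{x\}$ case handled analogously for the Belief Propagation consistency. The only difference is presentational: you make the bounded-reweighting/marginal-preservation step explicit, where the paper absorbs it into proportionality chains in which the factors independent of $\tau|_{\partial a}$ cancel upon normalization.
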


\begin{proof}
For this proof, we let $o(1)$ denote a term that tends to $0$ with $\eps'$, where the implied constant depends on $R$. 
We can write 
\begin{align*}
 \mu_{G,U}(\sigma) &\propto 
 	{ \sum_{\tau \in \Omega^{U \cup Y}} \vecone\{\tau|_U =\sigma\}  \cdot \mu_{G  \setminus \partial U,Y}(\tau|_Y) \prod_{a \in \partial U}  \psi_a(\tau|_{\partial a}) \prod_{a \in G[U]} \psi_a(\sigma|_{\partial a}) }
	\\
 &\propto
 \sum_{\tau \in \Omega^{U \cup Y}} \vecone\{\tau|_U =\sigma\}  \cdot  \prod_{y \in Y} \mu_{G  \setminus \partial U,y}(\tau_y) \prod_{a \in \partial U}  \psi_a(\tau|_{\partial a}) \prod_{a \in G[U]} \psi_a(\sigma|_{\partial a}) 
 + o(1) \\
 &\propto \mu_{G[U]}(\sigma) \cdot \prod_{a \in \partial U}  \sum_{\tau \in \Omega^{\partial a}} \mathbf 1 \{ \tau_{x_{a\to U}} = \sigma_{x_{a\to U}} \} \cdot \psi_a(\tau) \prod_{y \in \partial a \setminus U}  \mu_{G  \setminus \partial U,y}[ \tau_y]  +o(1) \,,
\end{align*}
where the second equation is obtained by the assumptions of the lemma. To obtain the first conclusion of the lemma it suffices to show that for each $a \in \partial U, \omega \in \Omega$, with $x_a=x_{a\to U}$ we have
\begin{align*}
\mu_{a \to x_a}(\omega) &\propto  \sum_{\tau \in \Omega^{\partial a}} \mathbf 1 \{ \tau_{x_a} = \omega \} \cdot \psi_a(\tau) \prod_{y \in \partial a \setminus U}  \mu_{G  \setminus \partial U,y}[ \tau_y]  +o(1) \, ,
\end{align*}
where  $\mu_{a \to x_a}$ is the standard message defined to be the marginal of $x_a$ in the factor graph obtained  by removing all constraint nodes $b\in\partial x_a$ except $a$.  We can write
\begin{align*}
\mu_{a \to x_a}(\omega) &\propto 
\sum_{\tau \in \Omega^{U \cup Y}} \vecone\{\tau_{x_a} =\omega\}  \cdot \mu_{G  \setminus \partial U,Y}(\tau|_Y)  \psi_a (\tau|_{\partial a}) \prod_{b \in \partial U \setminus \partial x_a}  \psi_b(\tau|_{\partial b}) \prod_{b \in G[U] \setminus \partial x_a} \psi_b(\tau|_{\partial b})  
\\
&\propto
\sum_{\tau \in \Omega^{U \cup Y}} \vecone\{\tau_{x_a} =\omega\}  \psi_a (\tau|_{\partial a}) \prod_{y \in \partial a \setminus U} \mu_{G  \setminus \partial U,y}[ \tau_y] \cdot \mu_{G  \setminus \partial U,Y \setminus \partial a}(\tau|_{Y\setminus \partial a})  \prod_{b \in \partial U \setminus \partial x_a}  \psi_b(\tau|_{\partial b})\hspace{-2mm} \prod_{b \in G[U] \setminus \partial x_a} \hspace{-2mm}\psi_b(\tau|_{\partial b})  
+o(1) \\
&\propto  \sum_{\tau \in \Omega^{\partial a}} \mathbf 1 \{ \tau_{x_a} = \omega \} \cdot \psi_a(\tau) \prod_{y \in \partial a \setminus U}  \mu_{G  \setminus \partial U,y}[ \tau_y]  +o(1) \,,
\end{align*}
since the expression 
\begin{align*}
 \mu_{G  \setminus \partial U,Y \setminus \partial a}(\tau|_{Y\setminus \partial a})  \prod_{b \in \partial U \setminus \partial x_a}  \psi_b(\tau|_{\partial b}) \prod_{b \in G[U] \setminus \partial x_a} \psi_b(\tau|_{\partial b}) 
\end{align*}
does not depend on $\tau|_{\partial a}$. 

The second part of the lemma is similar. We first show that for $a \in \partial x, y \in Y$, with $y\sim a$, $\mu_{G,y \to a}(\omega) = \mu_{G \setminus  \partial x,y}(\omega) + o(1)$:
\begin{align*}
\mu_{G, y \to a}(\omega) &= \mu_{G \setminus a,y}(\omega) 
\propto  
	\sum_{\tau \in \Omega^{Y \cup x}} \vecone\{\tau_y =\omega\} \mu_{G \setminus  \partial x,Y}(\tau|_Y) \prod_{b \in \partial x} \psi_b(\tau|_{\partial b}) 
	\\
&\propto 
\mu_{G \setminus \partial x,y}(\omega) \sum_{\tau \in \Omega^{(Y \cup x) \setminus y}}  \mu_{G \setminus \partial x,Y\setminus y}(\tau|_{Y \setminus y}) \prod_{b \in \partial x} \psi_b(\tau|_{\partial b})
+ o(1) = \mu_{G \setminus  \partial x,y}(\omega)  + o(1) \,.
\end{align*}
Next we see that 
\begin{align*}
\mu_{G, a\to x}(\omega) &=  \mu_{G \setminus(\partial x \setminus a),x} (\omega)
\propto
	\sum_{\tau \in \Omega^{Y \cup x}} \vecone\{\tau_x =\omega\} \mu_{G \setminus \partial x,Y}(\tau|_Y)  \psi_a(\tau|_{\partial a}) 
	\\
&\propto
	\sum_{\tau \in \Omega^{\partial a}}  \vecone\{\tau_x =\omega\}\psi_a(\tau|_{\partial a})  \prod_{y \in \partial a \setminus x} \mu_{G,y\to a}(\tau_y) 
	 + o(1)  
= \hat \mu_{G,a\to x}(\omega) +o(1) \,. 
\end{align*}
Finally,
\begin{align*}
\mu_{G, b \to x}(\omega)&= \mu_{G \setminus(\partial x \setminus b),x} (\omega)
\propto
	\sum_{\tau \in \Omega^{Y \cup x}} \vecone\{\tau_x =\omega\} \mu_{G \setminus \partial x,Y}(\tau)   \psi_b(\tau|_{\partial b }) 
	\\
&= 
	\sum_{\tau \in \Omega^{\partial b}} \vecone\{\tau_x =\omega\}    \psi_b(\tau) \prod_{y \in \partial b \setminus x}\mu_{G \setminus \partial x,y}(\tau_y) 	 +o(1)
\end{align*}
and so
\begin{align*}
\mu_{G, x \to a}(\omega) &\propto  
	\sum_{\tau \in \Omega^{Y \cup x}} \vecone\{\tau_x =\omega\} \mu_{G \setminus  \partial x,Y }(\tau|_Y)  \prod_{b \in \partial x \setminus a} \psi_b(\tau|_{\partial b })  
	\\
&= 
 \prod_{ b \in \partial x \setminus a} \sum_{\tau \in \partial b} \vecone\{\tau_x =\omega\} \psi_b(\tau) \prod_{y \in \partial b \setminus x}\mu_{G \setminus \partial x,y}(\tau_y) 
 +o(1)    
\propto  \prod_{ b \in \partial x \setminus a} \mu_{G, b \to x}(\omega)  +o(1) \, ,
\end{align*}
and so $\mu_{G, x \to a}(\omega) = \hat \mu_{G, x\to a}(\omega) + o(1)$. 
\end{proof}

We next show that with high probability over $\G^{\vI,\SIGMA}$ and the choice of a random cavity $\vU$ from $\G^{\vI,\SIGMA}$, the assumptions of Lemma~\ref{lemBethe1} hold.  We first set up a coupling of two random factor graphs.

\begin{lemma}
\label{lemCoupling}
Fix $1 \le r \le l$. Then the following distributions  on pairs of factor graphs and cavities, $(\G, \vU)$ and $(\G', \vU')$ have total variation distance $o(1)$:
\begin{enumerate}
\item Choose $\G$ according to the random factor graph model $\G(n, \vec m, P)$ and choose a cavity $\vU$ uniformly at random from $\mathcal C(\G,  l, r  )$.  Let $\mathcal D_{l,r,n}$ be the distribution of $\G[\vU]$.
\item Form a factor graph on $n$ variables nodes as follows. Choose  
	$$\vec m' \sim \Po\left (\frac{dn}{k} \left (1-  (l/n)^k\right)  \right)$$ and add $\vec m'$ constraint nodes $a_1, \dots a_{\vec m'}$ with $\partial a_i$ chosen uniformly from $x_1, \dots x_{n}$ conditioned on the event that not all nodes are chosen from $x_{n-l+1}, \ldots x_n$.  On the variable nodes $x_{n-l+1}, \dots x_n$ place a factor graph drawn according to  $\mathcal D_{l,r,n}$.   Permute the labels of the variable nodes at random. Call the resulting factor graph $\G'$ and let $\vU'$ denote the variable nodes originally labeled $x_{n-l+1}, \dots x_n$. 
\end{enumerate}
\end{lemma}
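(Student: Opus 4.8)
\emph{Proof strategy.} The plan is to generate the pair $(\G,\vU)$ of distribution~(1) in three stages and to match each stage with a piece of construction~(2). First, since the random factor graph model $\G(n,\vec m,P)$ treats variable nodes exchangeably, I would reduce to a \emph{fixed} target set $W=\{x_{n-l+1},\dots,x_n\}$: sampling $(\G,\vU)$ from~(1) is the same as sampling a factor graph $G$ with $W\in\mathcal C(G,l,r)$ from the law proportional to $\Pr[\G=G]\cdot\abs{\mathcal C(G,l,r)}^{-1}$ and then applying a uniformly random relabelling of the variable nodes. The relabelling is precisely the last step of construction~(2), so it suffices to compare the law of $(G,W)$ just described with the un-relabelled version of~(2), i.e.\ ``$\vec m'$ external constraints with neighbour tuples uniform subject to not lying in $W^k$, drawn independently of a copy of $\G[W]\sim\mathcal D_{l,r,n}$''.

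Second, I would run a plain draw of $\G$ (no conditioning, no reweighting) together with the fixed set $W$ and apply Poisson thinning. Classifying each of the $\vec m\sim\Po(dn/k)$ constraint nodes according to whether all $k$ of its neighbours fall in $W$ (probability $(l/n)^k$) splits them into an \emph{internal} collection of size $\Po\bc{\tfrac{dn}{k}(l/n)^k}$ with neighbour tuples uniform in $W^k$ and an \emph{external} collection of size $\Po\bc{\tfrac{dn}{k}(1-(l/n)^k)}$ with neighbour tuples uniform in $V^k\setminus W^k$, and these two collections are independent. The external part is verbatim the external part of construction~(2), and the internal part is exactly the induced factor graph on $W$. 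Consequently the only two sources of discrepancy between the law of $(G,W)$ from~(1) and construction~(2) are (a) the conditioning on the event $\{W\in\mathcal C(\G,l,r)\}$, together with the replacement of the ``raw'' internal law by $\mathcal D_{l,r,n}$, and (b) the size-biasing factor $\abs{\mathcal C(\G,l,r)}^{-1}$.

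Third, I would argue that both sources cost only $o(1)$ in total variation. For~(a): condition CAV1 restricts only the induced factor graph on $W$, and it holds automatically once that factor graph is drawn from $\mathcal D_{l,r,n}$, which by its very definition is supported on acyclic factor graphs with exactly $r$ components; conditions CAV2 and CAV3 restrict only the external constraints, and each is a union of $O(n)$ ``collision'' events — an external constraint landing with two or more neighbours in the bounded set $W$, or two external constraints sharing two variable nodes — each of probability $O(1/n)$. Hence in construction~(2) one has $\Pr[\vU'\notin\mathcal C(\G',l,r)]=o(1)$, so conditioning the law from~(1) on the complementary event costs only $o(1)$; and on the event that the induced factor graph on $W$ is a genuine $r$-component cavity, the ``raw'' internal law coincides with what $\mathcal D_{l,r,n}$ assigns up to the same $o(1)$ correction. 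For~(b): I would show that $\abs{\mathcal C(\G,l,r)}=(1+o(1))\,\Erw\abs{\mathcal C(\G,l,r)}$ \whp\ by a second moment argument — because cavities have bounded size, $\Erw\abs{\mathcal C(\G,l,r)}=\Theta(n^{r})$ and $\Erw\abs{\mathcal C(\G,l,r)}^2$ is dominated by the contribution of vertex-disjoint pairs of cavities (essentially independent thanks to the Poisson structure), while overlapping pairs contribute only a lower-order term. Given this concentration, multiplying the law of $(G,W)$ by $\abs{\mathcal C(G,l,r)}^{-1}$ and, simultaneously, identifying the conditional internal law with $\mathcal D_{l,r,n}$ changes the distribution by $o(1)$ in total variation. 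Chaining the three stages then gives that distributions~(1) and~(2) are within $o(1)$ in total variation.

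The main obstacle is item~(b): controlling the size-biasing that comes from choosing the cavity \emph{uniformly} among all cavities of $\G$, which forces the second moment estimate for $\abs{\mathcal C(\G,l,r)}$ and is also what lets one replace the raw internal law by $\mathcal D_{l,r,n}$. The remaining ingredients — the exchangeability reduction, the Poisson thinning, and the collision bounds for CAV2/CAV3 — are routine, although some bookkeeping is needed to match up the relabelling and the internal law cleanly.
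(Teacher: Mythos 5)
Your proposal is correct and takes essentially the same route as the paper's proof: both reduce to an independent-tuples representation of the model (your Poisson thinning versus the paper's binomial reformulation with multiplicities), discard the CAV2/CAV3 collision events at cost $O(1/n)$, and neutralize the size-biasing coming from the uniform choice of cavity via second-moment concentration of the number of copies of the fixed bounded acyclic structure. The only difference is presentational: the paper fixes the internal structure $H$ and computes an explicit likelihood ratio, bounding the total variation distance by $\Erw\left|1-\frac{\Erw[\#H(\G)]}{\#H(\G)}\right|=o(1)$, whereas you factorize into internal and external parts and compare stage by stage, which amounts to the same concentration argument.
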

\begin{proof}
This is essentially a standard fact from random graph theory.  Note that in both distributions the constraint functions assigned to each constraint node are chosen independently from $P$ for all constraint nodes so it suffices to prove the statement for the distributions of the underlying bipartite graph and selected cavity.  Moreover, as the distribution of the subgraph $\G'[\vU]$ is by definition the same as $\G[\vU]$, we can fix the structure of the subgraph, call this $H$.  Finally the definition of a cavity requires that any $a \in \partial U$ intersects $U$ in exactly one variable node: in the second distribution one of the $\vec m' $ constraint  nodes adjoins two or more variable nodes from $x_{n-l+1}, \ldots x_n$ with probability $O(1/n)$ so we may condition on the event that this does not occur.

It is somewhat easier to work with a slightly different interpretation of the random bipartite graph induced by the random factor graph model: for each of the $(n)_k$ ordered sets of $k$ variable nodes, include a constraint node attached to this set with probability $p = 1- \exp \left ( \frac{ dn}{k}  \frac{1}{(n)_k} \right) \sim \frac{d}{k n^{k-1}}$, independently for each ordered set of $k$ nodes, where $(n)_k = n (n-1) \cdots (n-k+1)$.  Then for each constraint node that appears in this graph, choose a multiplicity according to the distribution $X \sim \Po\left ( \frac{ dn}{k}  \frac{1}{(n)_k} \right) $ conditioned on $X \ge 1$. In this way each possible constraint node appears a $\Po\left ( \frac{ dn}{k}  \frac{1}{(n)_k} \right)$ number of times and by Poisson thinning this gives the same distribution as including $\Po(dn/k)$ constraint nodes with boundaries chosen uniformly at random.   To prove the lemma we can ignore multiplicities as with probability $1-O(1/n)$ no constraint node adjoining a variable node in $\vU$ or $\vU'$ will have multiplicity more than $1$, and the multiplicities in the rest of the graph can be coupled exactly.  So we consider simply the random bipartite graphs in which each possible constraint node appears independently with probability $p$.  

 Let $Q, Q'$ denote the respective probability distributions on pairs.  We can calculate the respective probabilities
\begin{align*}
Q[ G, U] &= \vecone\{G[U] = H  \} p^{|F(G)|}(1-p)^{(n)_k - |F(G)|} \frac{1}{\# H(G)} \\
Q'[ G, U]&=  \vecone\{G[U] = H  \} \frac{1}{(n)_l} p^{|F(G)|-|F(H)|} (1-p) ^{(n)_k- |F(G)|+ (l)_k -|F(H)|  }  \,,
\end{align*}
 where $\# H(G)$ is the number of induced copies of $H$ in $G$. 
 
 We can write the total variation distance as
 \begin{align*}
\| Q- Q'\|_{TV} &= \Erw_{Q} \left[ \left |1- \frac{Q'(\G,\vU)}{Q(\G,\vU)} \right |  \right] \\
&= \Erw_{Q} \left[ \left |1- \frac{(n)_l p^{|F(H)|} (1-p)^{(l)_k - |F(H)|}}{\# H(\G)} \right |  \right] \,. 
\end{align*}
The numerator in the fraction is exactly $\Erw_{Q}[ \# H(\G)]$.  Since $H$ is of bounded size and is acyclic, $\Erw_{Q}[ \# H(\G)]=\Theta(n)$, and the random variable $\# H(\G)$ has standard deviation of order $n^{1/2}$. Thus
\begin{align*}
\| Q- Q'\|_{TV}= \Erw_{Q} \left[ \left |1- \frac{(n)_K p^{|F(H)|} (1-p)^{\binom{K}{k} - |F(H)|}}{\# H(\G)} \right |  \right]  &= o(1) \, ,
\end{align*}
as desired.
 \end{proof}

\begin{lemma}
\label{lemDecor}
For every $\eps>0, K>0$ there exists a bounded random variable $\THETA$ so that the following is true for all $r \le l \le K$. Select $\G^{\vI, \hat\SIGMA}$  from the pinned random factor graph model with $|\vI| = \THETA$. Choose $\vU$ uniformly from the set of cavities $\mathcal C ( \G,l,r)$. Let $\vec Y$ be  the set of variables nodes at distance $2$ from $\vU$. 
  With probability at least $1-\eps$ over the choice of $\vI, \hat\SIGMA, \vU$,
\begin{align*}
\left \| \mu_{\G^{\vI, \hat\SIGMA} \setminus \partial \vU,\vec Y} - \bigotimes_{y \in \vec Y} \mu_{\G^{\vI, \hat\SIGMA} \setminus  \partial \vU, y}   \right \|_{TV} < \eps \, .
\end{align*}
\end{lemma}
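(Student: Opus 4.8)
The plan is to derive the statement from the pinning lemma (Lemma~\ref{Lemma_pinning}), the coupling of Lemma~\ref{lemCoupling}, and the elementary observation that deleting the boundary $\partial\vU$ of a cavity disconnects $\vU$ from the rest of the factor graph. Given $\eps$ and $K$, I would fix a small auxiliary parameter $\eps_1=\eps_1(\eps,K,d,\Psi)$, to be tuned at the end, apply Lemma~\ref{Lemma_pinning} with parameter $\eps_1$, and let $\THETA$ be the resulting bounded random variable. Applied conditionally on $\G$ and then averaged over $\G$, this gives that \whp\ the pinned measure $\mu_{\G^{\vI,\hat\SIGMA}}$ is $\eps_1$-symmetric, and since there are only finitely many pairs $(r,l)$ with $r\le l\le K$ a single $\THETA$ suffices for all of them after a union bound. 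I would also record the standard random-graph facts that hold \whp: $\cC(\G,l,r)\neq\emptyset$, the random cavity $\vU$ satisfies $|\partial\vU|\le R$ and $|\vec Y|\le R$ for a constant $R=R(K,d)$, and $\vI$ is disjoint from $\vU\cup\vec Y$ (all three being bounded random subsets of $[n]$).

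Next I would invoke Lemma~\ref{lemCoupling}, extended in the obvious way to carry the pinning along (the pinning merely appends $\THETA$ unary hard constraints attached to the uniformly random set $\vI$, and this commutes with the coupling), to pass to the distributionally $o(1)$-close model in which the cavity occupies a fixed block $\vU$, the induced subgraph $\G[\vU]$ is drawn from $\mathcal D_{l,r,n}$, and the constraints touching $\vU$ (and everything else) are freshly sampled; I keep the notation $\G^{\vI,\hat\SIGMA}$, $\vU$. The point of this step is that in this model $\vec Y$ is visibly a uniformly random bounded subset of $V_n\setminus\vU$, namely the union of the non-$\vU$ feet of the freshly sampled boundary constraints. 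By property \textbf{CAV2}, removing $\partial\vU$ leaves $\vU$ as a union of connected components, so $\mu_{\G^{\vI,\hat\SIGMA}\setminus\partial\vU}=\mu_{\G[\vU]}\otimes\mu_{H}$, where $H$ is the induced subgraph on $V_n\setminus\vU$ of $\G^{\vI,\hat\SIGMA}\setminus\partial\vU$; as $\vec Y\subset V_n\setminus\vU$, the quantity to control is $\mu_{H,\vec Y}$. Summing $\mu_{\G^{\vI,\hat\SIGMA}}$ over the $\vU$-coordinates moreover shows that $(\mu_{\G^{\vI,\hat\SIGMA}})_{V_n\setminus\vU}$ equals $\mu_H$ reweighted by $g(\sigma|_{\vec Y})$, where $g(s)=\sum_{\rho\in\Omega^{\vU}}\psi_{\G[\vU]}(\rho)\prod_{a\in\partial\vU}\psi_a(\rho,s)$; because $|\partial\vU|\le R$ and $\Psi$ is finite, both $g$ and $1/g$ are bounded by positive constants $c_1,c_2$ depending only on $R$ and $\Psi$. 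In other words, $\mu_H$ is a bounded reweighting, \emph{supported on the random bounded set $\vec Y$}, of the $(1+o(1))\eps_1$-symmetric measure $\nu:=(\mu_{\G^{\vI,\hat\SIGMA}})_{V_n\setminus\vU}$.

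The remaining step, which I expect to be the main obstacle, is to show that such a reweighting preserves approximate symmetry, and then to extract factorization on $\vec Y$. Here I would establish an auxiliary statement of the form: if $\nu\in\cP(\Omega^N)$ is $\delta$-symmetric, $W$ is a \emph{uniformly random} subset of $[N]$ of bounded size, and $h$ is a function of $\sigma|_W$ with $h,1/h$ bounded by constants, then $\nu'(\sigma)\propto\nu(\sigma)h(\sigma|_W)$ is $\delta'$-symmetric with $\delta'\to0$ as $\delta\to0$. The randomness of $W$ is essential, because $\delta$-symmetry controls only the average of $\TV{\nu_{ij}-\nu_i\otimes\nu_j}$ over pairs, not the row of this matrix indexed by a fixed coordinate of $W$. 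Concretely, Lemma~\ref{Lemma_cutm} puts $\nu$ close to $\bigotimes_i\nu_i$ in the cut metric, so Proposition~\ref{Cor_symmetry}(ii) yields that $\nu_{i_1,\dots,i_k}$ is close to the product of its one-dimensional marginals for all but a vanishing fraction of $k$-tuples, taking $k=|W|+2$; since $W$ together with a uniformly random pair $(i,j)$ forms a near-uniform $(|W|+2)$-tuple, \whp\ $\sigma|_W$ is approximately independent of $(\sigma_i,\sigma_j)$ under $\nu$, and this lets one push the factor $h(\sigma|_W)$ through and obtain $\nu'_{ij}\approx\nu'_i\otimes\nu'_j$ for most $(i,j)$. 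Applying this with $W=\vec Y$ and $h=1/g$ shows that $\mu_H$ is $\eps_2$-symmetric with $\eps_2\to0$ as $\eps_1\to0$. Finally, since $\vec Y$ is itself a uniformly random bounded subset of $V_n\setminus\vU$, one more application of Proposition~\ref{Cor_symmetry}(ii), with $k=|\vec Y|$ and conditioning on each of the finitely many possible values $|\vec Y|\le R$, gives $\TV{\mu_{H,\vec Y}-\bigotimes_{y\in\vec Y}\mu_{H,y}}<\eps$ \whp, provided $\eps_1$ (hence $\THETA$) was chosen small enough. A minor additional technicality is that the pinning uses the full-graph measure $\mu_{\G}$ rather than $\mu_H$; folding the deletion of $\partial\vU$ into the reweighting, rather than attempting to re-pin in the smaller graph, sidesteps this.
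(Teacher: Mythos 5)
There is a genuine gap, and it sits exactly where you flagged the ``main obstacle''. Your argument applies the pinning lemma to the \emph{full} graph $\G$ and then needs the specific tuple $\vec Y\cup\{i,j\}$ (and later the block $\vec Y$ itself) to be a \emph{typical} tuple for the measures $\nu=(\mu_{\G^{\vI,\hat\SIGMA}})_{V_n\setminus\vU}$ and $\mu_H$, invoking Proposition~\ref{Cor_symmetry}(ii), which only controls the \emph{average} over all $k$-tuples. But $\vec Y$ is not independent of these measures: the boundary constraints of the cavity attach precisely at $\vec Y$, so $\nu$ depends on $\vec Y$, and even $\mu_H$ depends on $\vec Y$ through the pinned values $\hat\SIGMA$, which are drawn from the full measure $\mu_{\G}$ that contains those constraints. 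Marginal uniformity of $\vec Y$ does not help here: for each realization of the feet $y$, the set of ``good'' tuples depends on $y$, and nothing in the average-over-tuples statement prevents the tuple $y$ itself from being systematically bad --- indeed, the boundary constraints can couple the coordinates of $\vec Y$ to each other through the cavity, which is exactly the correlation the lemma is supposed to rule out, and $\eps$-symmetry of the full pinned measure is insensitive to $O(1)$ exceptional pairs. So the auxiliary ``bounded reweighting by a function of a uniformly random $W$ preserves symmetry'' step, whose hypothesis requires $W$ independent of the measure, does not apply as stated, and neither does the final application of Proposition~\ref{Cor_symmetry}(ii) to $\mu_H$ at the block $\vec Y$.

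The paper avoids this circularity by running the symmetry machinery on the graph $\check\G$ obtained by \emph{deleting all constraint nodes incident to the cavity}: the law of $\mu_{\check\G}$ is independent of where the boundary feet land, so the uniformly random $\vec Y$ really is a typical $R$-tuple ($R=(k-1)LKd$) for the pinned measures $\mu_{\check\G}^{I,\tau}$, via Theorem~\ref{Thm_pin} and Proposition~\ref{Cor_symmetry}. The remaining mismatch --- that $\hat\SIGMA$ is drawn from $\mu_{\G'}$ rather than $\mu_{\check\G}$ --- is handled by the bounded Radon--Nikodym estimate $\mu_{\G'}(\sigma)\le c^{LKd}\mu_{\check\G}(\sigma)$ (the two graphs differ by boundedly many strictly positive constraints), which transfers the statement ``most pin values $\tau$ are good'' from $\mu_{\check\G}$ to $\mu_{\G'}$. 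Your observation that the summed-out boundary weight $g$ and $1/g$ are bounded is essentially this same contiguity fact, but you deploy it only to relate $\mu_H$ and $\nu$ as reweightings of one another, not to decouple $\vec Y$ (and $\hat\SIGMA$) from the measure whose symmetry you use; redirected as in the paper, it makes your reweighting-preserves-symmetry detour unnecessary. As written, though, the proposal does not prove the lemma.
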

\begin{proof}
We form $\G^{\vI, \hat\SIGMA}$ in stages using Lemma~\ref{lemCoupling} and the pinning procedure. 

We form two factor graphs on $n$ variable nodes, $\check \G$ and $\G'$. The factor graph $\check \G$ will have $l$ isolated variable nodes on which the cavity $\vU$ will be placed in the factor graph $\G'$, along with constraint nodes in $\partial \vU$.  We will show that when $\hat \SIGMA$ is chosen according to $\mu_{\G'}$, the measure $\mu_{\check \G}^{\vI, \hat \SIGMA}$ is $\eps'$-symmetric with high probability, and from this we derive the conclusion of the lemma.   

 Let $\vec m' \sim \Po\left (\frac{dn}{k} \left (1-  (l/n)^k\right)  \right)$, and choose $\vec m'$ constraint nodes with boundaries chosen uniformly from the variable nodes conditioned on the event they are not all among the last $l$ variables.  Let $\check \G$ be the factor graph on $x_1, \dots x_n$ formed by including only those constraint nodes that do not adjoin any variable node in $x_{n-l+1}, \dots x_n$. Let $\G'$ be the factor graph consisting of all $\vec m'$ constraint nodes and the addition of   constraint nodes  chosen according to $\mathcal D_{l,r,n}$ incident to $U= \{x_{n-l+1}, \dots x_n \}$ c.   

 Now choose $\vI \subset [n]$, $|\vI| = \THETA$ and choose $\hat \SIGMA $ from $\mu_{\G'}$.  By Lemma~\ref{lemCoupling}, up to total variation distance $o(1)$, $ (\G')^{\vI, \hat \SIGMA}$ has the distribution of $\G^{\vI, \hat\SIGMA}$ and $U$ is distributed as a uniformly random cavity from $\mathcal C(\G(n,\vec m),l,r)$.   

Condition on the event that the total number of constraint nodes incident to variables  in $ U$   is at most $L Kd$ and each constraint node in $\partial U$ joins exactly one $x \in U$.  For $L=L(\eps)$ chosen large enough that this happens with probability at least $1-  \eps/3$.  Under this event we have $|Y(U)| \le (k-1) LKd$.  Note that for $a \in \partial U$, the the variable nodes in $\partial a \setminus U$ are chosen uniformly at random from $x_1, \dots x_{n-l}$.   

For $I \subset [n]$, let $B_{I}$ be the set of $\tau \in \Omega^{I}$ so that 
\begin{align*}
	\frac1{n^R}\sum_{1\leq i_1<\cdots<i_R\leq n}\TV{\mu^{I, \tau}_{\check \G,i_1,\ldots,i_R}-\mu^{I, \tau}_{\check \G,i_1}\tensor\cdots\tensor\mu^{I, \tau}_{\check \G,i_R}}&<\eps \, ,
\end{align*}
with $R = (k-1) LKd$.  Let $\eps'$ be small enough as a function of $\eps$. Using Theorem~\ref{Thm_pin} and \Prop~\ref{Cor_symmetry}, the probability that $\vI$ is such that $\mu_{\check \G,\vI}(B_{\vI}) >1- \eps'$ is at least $1-\eps/3$ if we choose the distribution of $\THETA$ appropriately as a function of $\eps'$.  Fix such a set $J$. Now $\mu_{\check \G}$ and $\mu_{\G'}$ have different distributions, but they are contiguous: since all constraint functions are positive, and $\check \G$ and $\G'$ differ by at most $LKd$ constraint nodes, 
\begin{align*}
\mu_{\G'}(\sigma) &\le c^{LKd} \mu_{\check \G}(\sigma)
\end{align*}
for some constant $c = c(\Omega, \Psi)$ and all $\sigma \in \Omega^{V_n}$.  This implies that $\mu_{\G',J}(B_J) >1- c^{LKd} \eps' > 1-\eps/3$ if $\eps'$ is chosen small enough as a function of $\eps, d, K$.  Now since the variables nodes in $Y(U)$ are chosen uniformly at random from $x_1, \dots  x_{n-l}$, the conclusion follows.
\end{proof}

Using the ingredients above we complete the proof of Theorem~\ref{Thm_BetheStates} and Corollary~\ref{Cor_BP}.

\begin{proof}[Proof of Theorem~\ref{Thm_BetheStates} and Corollary~\ref{Cor_BP}]

To prove Theorem~\ref{Thm_BetheStates} it suffices to find a function $L= L(\eps, K)$ for every $\eps>0, K>0$ so that for large enough $n$, we can find the desired decomposition of $\G(n, \vec m, P)$ into at most $L$ parts. 

We find such a decomposition of $\Omega^{V_n}$ via the subcube decomposition given by the pinning operation.  We choose $\vI \subset [n]$ at random and consider the decomposition into states $S^{\vI, \tau}, \tau \in \Omega^{\vI}$.   The conclusions of Theorem~\ref{Thm_BetheStates} can be interpreted probabilistically: with probability at least $1-\eps$ over the choice of $\vI$, $\hat \SIGMA \sim \mu_{\G}$, and $\vU \in \mathcal C(\G, l,r)$,
 \begin{align}
 \label{eqProbstate}
\sum_{\sigma \in \Omega^{\vU}} \left | \bar \mu_{ \G^{\vI,\SIGMA},\vU}(\sigma) - \mu_{\G^{\vI,\SIGMA},\vU}(\sigma)  \right | < \eps \, .
\end{align}

To show~\eqref{eqProbstate}, fix $\eps>0, K>0$ and choose $\eps'>0$ small enough.  Let $Y(U)$ denote the set of variable nodes at distance $2$ from a cavity $U$. For a given set $I \subseteq [n]$, let 
\begin{align*}
A_I = \left \{\tau \in \Omega^I: \frac{1}{| \mathcal C ( \G,l,r)|} \sum_{U \in \mathcal C ( \G^{I,\tau},l,r)} \left \| \mu_{ \G^{I,\tau}  \setminus \partial U, Y(U)} - \bigotimes_{y \in  Y(U)} \mu_{ \G^{I,\tau} \setminus  \partial U, y}   \right \|_{TV} < \eps'     \, \text{ for all } 1\le r \le l\le K  \right \} \, .
\end{align*}
 By Lemma~\ref{lemDecor} (and a union bound over all $r,l$) there is a set $I \subset [n]$, of  bounded size in terms of $\eps$, so that  $\mu_{ \G, I}(A_I) > 1-\eps$. We claim that the subcube decomposition of $\Omega^{V_n}$ along $I$ gives us our desired decomposition, with the `good' parts of the decomposition given by $S^{I,\tau}$ with $\tau \in A_I$. By our choice of $I$ we have $\sum_{\tau \in A_I} \mu(S^{I,\tau}) > 1-\eps$, as desired.  
 
 We now show that for $\tau \in A_I$, $S^{I,\tau}$ is an $(\eps, l)$-Bethe state for $i=1,\dots K$. The definition of $A_I$ implies that for each $1\le r\le l \le K$, and for $1-\eps'$ fraction of $U$ chosen from $\mathcal C( \G, l,r)$, we have 
\begin{align*}
\left \| \mu_{G^{I,\tau}  \setminus  \partial \vU,Y(U)} - \bigotimes_{y \in Y(U)} \mu_{G^{I,\tau} \setminus  \partial U, y}   \right \|_{TV} < \eps' \, .
\end{align*}
Then Lemma~\ref{lemBethe1} tells us that 
 \begin{align*}
\sum_{\sigma \in \Omega^U} \left | \bar \mu_{G^{I,\tau},U}(\sigma) - \mu_{G^{I,\tau},U}(\sigma)  \right | < \eps \, ,
\end{align*}
and so $S^{I,\tau}$ is an $(\eps,l)$-Bethe state for $i=1,\dots K$, proving Theorem~\ref{Thm_BetheStates}. Moreover, let $\mu_{x \to a} [ \nix | S^{I,\tau}], \mu_{a \to x}[ \nix |S^{I,\tau}]$ denote the canonical message set of $\mu_{ \G}[\nix |S^{I,\tau}]$, and let $\hat \mu_{a \to x}, \hat \mu_{x \to a}$ be their image under the BP operator.  Taking $r=l=1$, and applying the second part of Lemma~\ref{lemBethe1}, we obtain that 
\begin{align*}
\frac1{n}\sum_{x\in V( \G)}\sum_{a\in\partial x}\sum_{\sigma\in\Omega}
		\abs{\mu_{x\to a}(\sigma| S^{I,\tau})-\hat \mu_{x\to a}(\sigma)}+\abs{\mu_{a\to x}(\sigma|S^{I,\tau})-\hat \mu_{a\to x}(\sigma)} < \eps
\end{align*}
and so the canonical messages of $\mu(\nix |S^{I,\tau})$ are an $\eps$-Belief Propagation fixed point, proving Corollary~\ref{Cor_BP}.
\end{proof}

\begin{proof}[Proof of \Cor~\ref{Cor_BetheCutMetric}]
The assertion is immediate from \Thm~\ref{Thm_pin} and \Prop~\ref{Cor_symmetry}.
\end{proof}

\subsection*{Acknowledgments.}
We thank Dmitry Panchenko for many detailed comments and corrections on an early version of this paper. We also thank Ryan O'Donnell for bringing~\cite{Allen,Raghavendra} to our attention and Florent Krzakala and Guilhem Semerjian for helpful comments.  WP was supported in part by the Engineering and Physical Sciences Research Council (EPSRC) grant EP/P009913/1.

\end{document}